\newtheorem{thm}{Theorem}[section]
\newtheorem{cor}[thm]{Corollary}
\newtheorem{lem}[thm]{Lemma}
\newtheorem{prop}[thm]{Proposition}
\theoremstyle{definition}
\newtheorem{ex}[thm]{Example}
\theoremstyle{definition}
\newtheorem{defn}[thm]{Definition}
\theoremstyle{definition}
\newtheorem{rem}[thm]{Remark}
\theoremstyle{definition}
\def\C{\mathbb C}
\def\Z{\mathbb Z}
\def\T{\textnormal{\hspace{-0.00cm}\textsf T}}
\def\icis{\textsc{icis}}
\def\dim{\operatorname{dim}}
\def\GL{\operatorname{GL}}
\def\Der{\operatorname{Der}}
\def\Derlog{\operatorname{Derlog}}
\def\t{t}
\def\syz{\operatorname{syz}}
\def\Tor{\operatorname{Tor}}
\def\ord {\operatorname{ord}}
\def\ovarphi {\overline\varphi}
\def\opsi {\overline\psi}
\def\O{\mathcal O}
\def\m{\mathbf m}
\def\I{\mathbf I}
\def\J{\mathbf J}
\def\lto{\longrightarrow}
\def\({\left(}
\def\){\right)}
\def\geq{\geqslant}
\def\leq{\leqslant}
\def\*{\color{red}\blacksquare}
\def\+{\color{green}\blacksquare}
\newcommand{\smallfrac}[2]{{\textnormal{\small$\frac{#1}{#2}$}}}
\subjclass[$2010$ Mathematics Subject Classification]{Primary
32S05; Secondary 32S50, 32S99}
\newcommand{\tpitchfork}{%
  \vbox{
    \baselineskip\z@skip
    \lineskip-.52ex
    \lineskiplimit\maxdimen
    \m@th
    \ialign{##\crcr\hidewidth\smash{$-$}\hidewidth\crcr$\pitchfork$\crcr}
  }%
}
\def\subsection{\@startsection{subsection}{2}%
  \z@{.5\linespacing\@plus.7\linespacing}{.3\linespacing}%
  {\normalfont\bfseries}}
\begin{document}

\title[Modules of derivations and singularities of pairs of maps]{
Modules of derivations, logarithmic ideals \\ \vskip4pt  and singularities of maps on analytic varieties}

\author{C. Bivi\`a-Ausina}
\address{
Institut Universitari de Matem\`atica Pura i Aplicada,
Universitat Polit\`ecnica de Val\`encia,
Cam\'i de Vera, s/n,
46022 Val\`encia, Spain}
\email{carbivia@mat.upv.es}

\author{K. Kourliouros}
\address{
Imperial College London,
Department of Mathematics,
180 Queen's gate, South Kensington Campus,
London SW7 2AZ, United Kingdom}
\email{k.kourliouros@gmail.com}

\author{M. A. S. Ruas}
\address{
Instituto de Ciências Matemáticas e de Computação,
Universidade de São Paulo,
Av. Trabalhador São-carlense, 400,
13566-590 São Carlos, SP, Brazil}
\email{maasruas@icmc.usp.br}

\keywords{Milnor number, logarithmic vector fields, Tjurina number, parameter submodules, Buchsbaum-Rim multiplicity}

%%%%%%%%%%%%%%%%%%%%%%%%%%%%%%%%%%%%%%%%%%%%%%%%%%%%%%%%%%%%%%%%%%%%Resum
\begin{abstract}
We introduce the module of derivations $\Theta_{h,M}$ attached to a given analytic map $h:(\C^n,0)\to (\C^p,0)$
and a submodule $M\subseteq \O_n^p$ and analyse several exact sequences related to $\Theta_{h,M}$. Moreover, we obtain formulas for several numerical invariants associated to the pair $(h,M)$ and a given analytic map germ $f:(\C^n,0)\to (\C^q,0)$.
In particular, if $X$ is an analytic subvariety of $\C^n$, we derive expressions
for analytic invariants defined in terms of the module $\Theta_X$ of logarithmic vector fields of $X$.
\end{abstract}

\maketitle

\section{Introduction}

Let $X$ be the germ at $0$ of an analytic subvariety of $\C^n$ and let $\O_n$ denote the ring of analytic function germs $f:(\C^n,0)\to \C$. We denote by $\Theta_X$ the $\O_n$-module of germs at $0$ of vector fields of $\C^n$ which are tangential to $X$. That is, if $I(X)$ denotes the
ideal of $\O_n$ generated by the elements of $\O_n$ vanishing on $X$, then
$\Theta_X=\{\delta\in \Der(\O_n): \delta(I(X))\subseteq I(X)\}$, where $\Der(\O_n)$ is the $\O_n$-module of derivations of $\O_n$.
The module $\Theta_X$, which is also usually denoted by $\Derlog(X)$, determines and is determined by a natural stratification on $X$ called
the {\it logarithmic stratification} of $X$ (see \cite[Lemma 1.5]{BR} and \cite[\S3]{Saitolog}). Therefore $\Theta_X$ links the topology of $X$ with the algebraic properties of the ideal $I(X)$, and encodes fundamental properties of $X$. For instance, it is proven in \cite{MirandaN2018} that
the module $\Theta_X$ is reflexive if and only if $X$ is the germ of a hypersurface.

Given a function germ $f\in \O_n$, the ideal of $\O_n$ given by all elements $\delta(f)$, where $\delta$ varies in $\Theta_X$, encodes essential information concerning the singularities of $f$ on any logarithmic stratum of $X$ (see \cite{BR}).
We usually denote this ideal by $J_X(f)$ and we refer to the ideals thus defined as {\it logarithmic ideals}. The colength of $J_X(f)$ in $\O_n$, which we denote by $\mu_X(f)$, constitutes an essential numerical invariant of $f$ under $\mathcal R_X$-equivalence, that is, under biholomorphisms $\phi:(\C^n,0)\to (\C^n,0)$ such that $\phi(X)=X$ (see \cite{BR}).
 We remark that, if $f$ has an isolated singularity at the origin, then we have $\mu_X(f)=\mu(f)$ when considering $X=\C^n$, where
$\mu(f)$ denotes the Milnor number of $f$. Let us recall that if $g:(\C^n,0)\to (\C^p,0)$ denotes an {\icis}, i.\,e. an isolated complete intersection singularity, then we denote the Milnor and the Tjurina number of $g$ by $\mu(g)$ and by $\tau(g)$, respectively (see for instance \cite{GLS, Greuel, Le, VosegaardMathAnn}).

Logarithmic ideals have been an object of study in \cite{A1,BKR1,BiviaRuas,BR,ES,Kour,LNOT3,LNOT4,LNOT5,Ly}, among others.
One of the fundamental properties of these ideals is that, when $f,h\in \O_n$ have isolated singularity at the origin and $\mu_X(f)$ is finite, where $X=h^{-1}(0)$, then $(f,h)$ is an {\icis} and $\mu_X(f)$ can be computed by means of the formula
$$
\mu_X(f)=\mu(f)+\mu(f,h)+\mu(h)-\tau(h)
$$
as proven independently in \cite{Kour} and \cite{LNOT3}. This formula has been generalized in \cite[p.\,45]{LNOT5} in the case where $X$ is replaced
by an {\icis}; the resulting expression has been part of our motivations in this paper.

As we will see, logarithmic ideals take part of a more general setup. One of the purposes of this paper is to study in a unified way several numerical invariants associated to pairs of maps $h:(\C^n,0)\rightarrow (\C^p,0)$ and $f:(\C^n,0)\rightarrow (\C^q,0)$, or equivalently to diagrams of the form
\begin{equation}\label{DD1}
(\C^p,0)\stackrel{h}{\longleftarrow}(\C^n,0)\stackrel{f}{\longrightarrow}(\C^q,0)
\end{equation}
under the action of various subgroups of the contact group $\mathcal{K}$,
naturally acting on such diagrams (see \cite{DamonMemoirsAMS2,Dufour1977,MRT}).
Any diagram of holomorphic map germs like (\ref{DD1}) is usually called a divergent diagram (see for instance \cite{Dufour1977,Dufour1989}). Let us remark that, after identifying (\ref{DD1})
with the map $(f,h):(\C^n,0)\to (\C^{q+p},0)$, the equivalence of divergent diagrams under the action
of a given subgroup of $\mathcal{K}$ in the target of $(f,h)$  corresponds to the product of the corresponding actions in $(\C^p,0)$ and $(\C^q,0)$. Below we describe in more detail both the goals as well as the structure of the paper.

Let $h:(\C^n,0)\rightarrow (\C^p,0)$ be an analytic map germ and let $M$ be a submodule of $\O_n^p$.
In Section \ref{exactseqs} we generalize the definition of $\Theta_X$ by introducing the module $\Theta_{h,M}$ of derivations associated to the pair formed by $h$ and $M$ (see Definition \ref{ThetaC}).
Considering the natural componentwise action of any derivation of $\O_n$ on germs of analytic maps, $\Theta_{h,M}$ is defined as the submodule of $\Der(\O_n)$ formed by the derivations $\delta$ for which $\delta(h)\in M$. Given now a map germ $f:(\C^n,0)\rightarrow (\C^q,0)$, we can look at the submodule $J_{h,M}(f)$ of $\O_n^q$ generated by all elements $\delta(f)$, where $\delta$ varies in $\Theta_{h,M}$.

Let $D(h)$ denote the submodule of $\O_n^p$ generated by $\frac{\partial h}{\partial x_1},\dots, \frac{\partial h}{\partial x_n}$. We say that the module $M$ is {\it geometric} for $h$ if the submodule $D(h)+M\subseteq \O_n^p$ represents the tangent space to the orbit of $h$ under the action of some geometric subgroup $\mathcal{G}$ of the contact group $\mathcal{K}$ in the sense of J.\,Damon \cite{DamonMemoirsAMS2,DamonMemoirsAMS3}.
Let $M\subseteq \O_n^p$ and $N\subseteq \O_n^p$ be geometric submodules for the analytic maps
$h:(\C^n,0)\rightarrow (\C^p,0)$ and $f:(\C^n,0)\rightarrow (\C^q,0)$, respectively, and let $\mathcal G$ and $\mathcal G'$ be the respective associated geometric subgroups leading to the corresponding tangent spaces. The submodule $J_{h,M}(f)+N\subseteq \O_n^q$ gives the tangent space to the $\mathcal G_0'$-orbit of $f$, where $\mathcal G_0'$ is the subgroup of $\mathcal{G}'$ consisting of all diffeomorphisms in $\mathcal{G}'$ which  preserve a fixed representative of the $\mathcal{G}$-orbit of $h$.
%---------02
In Theorem \ref{claudetot} we show an abstract version of the fundamental short exact sequence of infinitesimal deformations modules associated to diagrams of maps $(f,h)$ for arbitrary actions of subgroups of the group $\mathcal{K}$ naturally acting on such diagrams. Versions of this sequence appear either implicitly or explicitly in several works (see for instance \cite{HRR} or \cite{MRT}). Using this, in Theorem \ref{claudetot}\,(\ref{ses2}) and Proposition \ref{prop-1}\,(\ref{es2KM}) we show formulas for the colength of modules of the form $J_{h,M}(f)+N$, where $N\subseteq\O_n^q$ is any submodule of $\O_n^q$. As a result, we obtain several expressions concerning the ideals $J_X(f)$ in the case $q=1$ and specially when $X$ is an isolated complete intersection singularity (see Corollary \ref{segonaexp}) or a union of hypersurfaces (see Corollary \ref{unionXi}).

Let $I$ be an ideal of $\O_n$. In Section \ref{RSQM} we focus on analysing how Theorem \ref{claudetot}\,(\ref{ses2}) can be improved to obtain an expression for the colength of $J_{h,M}(f)+N$ in $\O_n^q$ when $M=I^{\oplus p}$ and $N$ is a submodule of $I^{\oplus q}$,
where $f:(\C^n,0)\to (\C^q,0)$ and $h:(\C^n,0)\to (\C^p,0)$ denote analytic maps. In this sense, the main result of this section is Corollary \ref{expgen1}, where we obtain an expression of the colength of a submodule of this type. This result is preceded by several algebraic results that has lead us to highlight the condition $D(g)\cap I^{\oplus s}=ID(g)$ (see Propositions \ref{propTor} and \ref{consistent}), for a given analytic map
$g:(\C^n,0)\to \C^s$. When this condition holds then we say that $g$ is {\it $I$-consistent}. Thus, in Corollary \ref{expgen1} we prove that if
$D(f,h)+N\oplus I^{\oplus p}$ has finite colength and $(f,h)$ is $I$-consistent, then
$$
\dim_\C\frac{\O_n^q}{J_{h, I^{\oplus p}}(f)+N}= \dim_\C\frac{I^{\oplus q}}{ID(f)+N}+\dim_\C\frac{\O^{q+p}_n}{D(f,h)+I^{\oplus (p+1)}}-
\dim_\C\frac{\O^p_n}{D(h)+I^{\oplus p}}.
$$
Also in Section \ref{RSQM} we characterize
the notion of $I$-consistency of analytic maps $g:(\C^n,0)\to \C^s$ in several cases, especially when the quotient module $\O_n^s/D(g)$ is Cohen-Macaulay (see Theorem \ref{JXfIf}).

In Section \ref{logideals} we apply the results of Section \ref{RSQM} to deduce a formula for the colength of ideals of the form
$J_X(f)+N$, where now $X=h^{-1}(0)$ and $h:(\C^n,0)\to (\C^p,0)$ is an {\icis}, $f:(\C^n,0)\to (\C,0)$ is an analytic function germ such that $\mu_X(f)$ is finite and $N$ is any ideal of $\O_n$ contained in the ideal generated by the component functions of $h$ (see Corollary \ref{noumuXf}). The case $N=0$ of this expression coincides with the mentioned formula obtained in \cite[p.\,45]{LNOT5} by following a different procedure.

Finally, motivated by the known result expressing the Tjurina number of an {\icis} $h:(\C^n,0)\to (\C^p,0)$ as the dimension of a quotient of $\Theta_X$ (see \cite[3.6]{Vosegaard}, \cite[9.6]{Looijenga} or \cite[Proposition 4.6]{LNOT5}), our goal in Section \ref{ThetaXT} is to extend this result to pairs of maps $f:(\C^n,0)\to (\C^q,0)$ and $h:(\C^n,0)\to (\C^p,0)$ for which $(f,h)$ is $I$-consistent, where $I$ denotes an ideal of $\O_n$ (see Proposition \ref{lesTheta2} and Corollary \ref{ThThT}). The case of our results where $h$ is an {\icis}, $f$ is a function and $I$ is the ideal generated by the components of $h$ leads to the existing results in this direction, as can be seen in Corollary \ref{isomicis}.

\section{Exact sequences associated to modules of derivations acting on maps}\label{exactseqs}

Let $h:(\C^n,0)\to (\C^p,0)$ and $f:(\C^n,0)\to (\C^q,0)$ be germs of analytic maps and let $M$ and $N$ denote
submodules of $\O_n^p$ and $\O_n^q$, respectively. In this section we introduce an exact sequence associated to the pairs $(h,M)$ and $(f,N)$.
This exact sequence (see Theorem \ref{claudetot}) has lead us to show a fundamental relation between the respective colengths of some submodules attached to these pairs, as can be seen in Theorem \ref{claudetot}. Moreover, based on another exact sequence, in Proposition \ref{prop-1} and Corollary \ref{codsumats} we expose additional relations between numerical invariants, defined in terms of modules of derivations, attached to such pairs.

 Let $h=(h_1,\dots, h_p):(\C^n,0)\to (\C^p,0)$ be an analytic map
germ. We denote by $D(h)$ the submodule of $\O_n^p$ generated by
$\frac{\partial h}{\partial x_1},\dots, \frac{\partial h}{\partial x_n}$. We refer to $D(h)$ as the {\it Jacobian module of $h$}. We implicitly consider each partial derivative
$\frac{\partial h}{\partial x_i}$ as a column matrix, so we will also regard $D(h)$ as a $p\times n$ matrix. If $f\in \O_n$, we denote $D(f)$ also by $J(f)$ and we refer to it as the {\it Jacobian ideal of $f$}.

Let $R$ be a ring. If $J$ denotes any ideal of $R$ and $p\in\Z_{\geq 1}$, then we denote by $J^{\oplus p}$ the submodule
$J\oplus \cdots \oplus J\subseteq R^p$, where $J$ is repeated $p$ times.

\begin{defn}\label{ThetaC}
Let $h=(h_1,\dots, h_p):(\C^n,0)\to (\C^p,0)$ be an analytic map germ and let $M\subseteq \O_n^p$ be a submodule.
\begin{enumerate}[label=(\alph*)]%\renewcommand{\labelenumi}{(\alph{enumi})}
\item We usually identify the module of derivations $\Der(\O_n)$ of $\O_n$ with $\O^n_n$. Given a derivation $\delta\in \O_n^n$,
we denote by $\delta(h)$ the column vector whose element in the $i$-row is equal to $\delta(h_i)$, for all $i=1,\dots, p$. Hence,
if $\delta=(\delta_1,\dots, \delta_n)\in \O_n^n$, then $\delta(h)=\delta_1\frac{\partial h}{\partial x_1}+\cdots+
\delta_n\frac{\partial h}{\partial x_n}$.

\item\label{apb} We denote by $\Theta_{h,M}$ the submodule of $\O_n^n$
generated by those $\delta\in\O_n^n$ for which $\delta(h)\in M$. Obviously $\Theta_{h,M}=\Theta_{h,M\cap D(h)}$. If $N$ is another submodule of $\O_n^p$, then we
define $\Theta_{N,M}$ as the submodule of $\O_n^n$
generated by those $\delta\in\O_n^n$ such that $\delta(N)\subseteq M$, where
$\delta(N)$ denotes the submodule of $\O_n^p$ generated by $\delta(g)$, where $g$ varies in $N$. If $g_1,\dots, g_r$ is a generating system of $N$, then
$\Theta_{N,M}=\Theta_{g_1,M}\cap \cdots \cap\Theta_{g_r,M}$ provided that $N\subseteq M$, but
it is easy to find examples where this equality is not true in general.
We denote the submodule $\Theta_{M,M}\subseteq \O_n^n$ simply by $\Theta_M$.

\item If $f:(\C^n,0)\to (\C^q,0)$ is another analytic map, we
denote by $J_{h,M}(f)$ the submodule of $\O_n^q$ generated by $\{\delta(f): \delta\in \Theta_{h,M}\}$.

\item\label{notacio} In order to simplify the notation, if $I$ denotes any ideal of $\O_n$, then
we denote the modules $\Theta_{h,I^{\oplus p}}$ and $J_{h,I^{\oplus p}}(f)$ simply by
$\Theta_{h, I}$ and $J_{h, I}(f)$, respectively. We denote by $H_h$
the module $\Theta_{h, 0}$ (where here $0$ denotes the zero ideal of $\O_n$) and we refer to it as the module of {\it Hamiltonian vector fields associated to $h$}.
\end{enumerate}
\end{defn}

\begin{rem}
Let $I$ be an ideal of $\O_n$ and let $h_1,\dots, h_p$ be a generating system of $I$.
According to Definition \ref{ThetaC}, the module $\Theta_I$ of derivations $\delta\in\O_n^n$ for which $\delta(I)\subseteq I\}$
is equal to $\Theta_{h, I}$, where $h$ denotes the map $(h_1,\dots, h_p)$.
Given a derivation $\delta\in \O_n^n$, any ideal of $\O_n$ for which $\delta(I)\subseteq I$ is called
{\it $\delta$-invariant} in \cite{Ciuperca}. We recall that Ciuperc\u{a} proves in \cite[Theorem 2.4]{Ciuperca}, in a more general context, that any derivation
$\delta\in \O_n^n$ for which $\delta(I)\subseteq I$ also verifies that $\delta(\overline I)\subseteq \overline I$, where
$\overline I$ denotes the integral closure of $I$. Moreover, the module
$\Theta_I$ is also called the {\it tangential idealizer} of $I$ in \cite{MirandaN2016}. We remark that, if $I$ denotes an ideal of $\O_n$, the problem of determining an explicit
generating system of $\Theta_I$ is an active problem in commutative algebra (see for instance \cite{BM-N}) and that
$\Theta_I$ also encodes a lot of information regarding the geometry of the zero set of $I$, as can be seen in \cite{BR,DamonMemoirsAMS,HM,MirandaN2018,Saitolog}.
\end{rem}

Let $R$ be a ring and let $M$ be an $R$-module. Given a sequene of elements $u_1,\dots, u_s\in M$, we denote by
$\syz(u_1,\dots, u_s)$ the module of syzygies of $u_1,\dots, u_s$. That is, $\syz(u_1,\dots, u_s)$ is the $R$-submodule
of $R^s$ generated by those $g_1,\dots, g_s$ for which $g_1u_1+\cdots +g_su_s=0$. In particular, we have
$H_h=\syz(\frac{\partial h}{\partial x_1},\dots,\frac{\partial h}{\partial x_n})$, for any analytic map germ $h:(\C^n,0)\to (\C^p,0)$.

The next result is analogous to \cite[Lemma 2.1]{BR} and allows us to apply Singular \cite{Singular} to obtain a matrix of generators
of modules of the form $\Theta_{h,M}$.

\begin{lem}\label{syzyg}
Let $h=(h_1,\dots, h_p):(\C^n,0)\to (\C^p,0)$ be an analytic map and let $M$ be any submodule of $\O_n^p$.
Let us consider a matrix $[m_{ij}]$ of size $p\times r$, for some $r\geq 1$, whose columns generate $M$.
Let $D$ be the set of elements of $\O_n^p$ given by the columns of the matrix
\begin{equation}\label{matrixDerlog}
\left[
  \begin{array}{ccccccccccccc}
\frac{\partial h_1}{\partial x_1}&\cdots & \frac{\partial h_1}{\partial x_n}& m_{11} & \cdots & m_{1r} \\
%\frac{\partial h_2}{\partial x_1}&\cdots & \frac{\partial h_2}{\partial x_n}& 0 & \cdots & 0 & h_1 & \cdots & h_m &  & 0 &\cdots & 0 \\
\vdots &  & \vdots & \vdots &  & \vdots  \\
\frac{\partial h_p}{\partial x_1}&\cdots & \frac{\partial h_p}{\partial x_n}& m_{p1} & \cdots & m_{pr}
  \end{array}
\right].
\end{equation}
Then $\Theta_{h, M}=\pi_n(\syz(D))$, where $\pi_n:\O_n^{n+r}\to \O_n^n$ is the projection onto the first $n$ components.
\end{lem}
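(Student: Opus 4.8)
The plan is to unwind the definitions and verify $\Theta_{h,M}=\pi_n(\syz(D))$ by a two-sided inclusion, since both sides are submodules of $\O_n^n$. First I would observe that the set of derivations $\delta\in\O_n^n$ with $\delta(h)\in M$ is already a submodule of $\O_n^n$: if $\delta(h),\delta'(h)\in M$ and $a\in\O_n$, then $(\delta+\delta')(h)=\delta(h)+\delta'(h)\in M$ and $(a\delta)(h)=a\,\delta(h)\in M$, because $M$ is a submodule. Consequently $\Theta_{h,M}$, which a priori is the submodule \emph{generated} by such $\delta$, coincides with the whole set $\{\delta\in\O_n^n:\delta(h)\in M\}$.

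Next I would translate the membership condition into a syzygy condition. Writing $m_1,\dots,m_r\in\O_n^p$ for the columns of $[m_{ij}]$, an element $\delta=(\delta_1,\dots,\delta_n)$ satisfies $\delta(h)\in M$ if and only if there exist $a_1,\dots,a_r\in\O_n$ with
$$
\delta(h)=\sum_{j=1}^n\delta_j\frac{\partial h}{\partial x_j}=\sum_{k=1}^r a_k m_k,
$$
the first equality being the defining formula for $\delta(h)$ from Definition \ref{ThetaC}. Rearranging, this is equivalent to
$$
\sum_{j=1}^n\delta_j\frac{\partial h}{\partial x_j}-\sum_{k=1}^r a_k m_k=0,
$$
which is precisely the statement that $(\delta_1,\dots,\delta_n,-a_1,\dots,-a_r)\in\syz(D)$, since by construction the columns of the matrix \eqref{matrixDerlog} are exactly $\frac{\partial h}{\partial x_1},\dots,\frac{\partial h}{\partial x_n},m_1,\dots,m_r$.

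From here both inclusions follow immediately. For $\Theta_{h,M}\subseteq\pi_n(\syz(D))$, given $\delta$ with $\delta(h)\in M$ I would choose the $a_k$ above; then $(\delta_1,\dots,\delta_n,-a_1,\dots,-a_r)$ is a syzygy of $D$ whose image under $\pi_n$ is $\delta$. For the reverse inclusion, any syzygy $(g_1,\dots,g_n,g_{n+1},\dots,g_{n+r})\in\syz(D)$ yields, on setting $\delta=(g_1,\dots,g_n)$, the identity $\delta(h)=-\sum_{k=1}^r g_{n+k}\,m_k\in M$, so that $\pi_n(\syz(D))\subseteq\Theta_{h,M}$.

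The argument is essentially a direct unwinding of the definitions, so I do not anticipate a genuine obstacle. The only point requiring a little care is the opening observation that $\{\delta:\delta(h)\in M\}$ is already closed under addition and $\O_n$-scalar multiplication; this is what lets one dispense with passing to the generated submodule and makes the two projections and inclusions mutually inverse correspondences between syzygies $(g_1,\dots,g_{n+r})$ and derivations $(g_1,\dots,g_n)$.
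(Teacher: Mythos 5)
Your proof is correct and follows essentially the same route as the paper's: both translate the condition $\delta(h)\in M$ into the existence of coefficients $a_1,\dots,a_r$ making $(\delta_1,\dots,\delta_n,-a_1,\dots,-a_r)$ a syzygy of the columns of \eqref{matrixDerlog}, and then project. Your opening remark that $\{\delta:\delta(h)\in M\}$ is already a submodule (so that "generated by" is harmless) is a small point the paper leaves implicit, but it does not change the argument.
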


\begin{proof}
Let $\delta\in\O_n^n$. We have that
$\delta=(\delta_1,\dots,\delta_n)$ belongs to $\Theta_{h, M}$ if and only if
there exist $a_1,\dots, a_m\in\O_n$ such that
$\delta_1\frac{\partial h_i}{\partial x_1}+\cdots +\delta_n\frac{\partial h_i}{\partial x_n}=
a_1m_{i1}+\cdots+a_rm_{ir}$, for all $i=1,\dots, p$. This latter condition is equivalent to saying that the element of $\O_n^{n+r}$ given by
$(\delta_1,\dots, \delta_n, -a_1,\dots, -a_r)$ belongs to $\syz(D)$. Hence the result follows.
\end{proof}

Let $U$ be an open neighbourhood of $0\in\C^n$ and let $\mathcal N\subseteq\mathcal M$ denote coherent sheaves of submodules of $\O_U^p$
defined in $U$. Therefore, in Definition \ref{ThetaC} we introduce a new sheaf $\Theta_{\mathcal N,\mathcal M}$ defined also in $U$ whose stalks
are given by $(\Theta_{\mathcal N,\mathcal M})_z=\Theta_{\mathcal N_z,\mathcal M_z}$, for all $z\in U$. This sheaf of modules is coherent, as a direct application of Oka's theorem (see \cite[p.\,320]{L}) and Lemma \ref{syzyg}.

Let $h:(\C^n,0)\to (\C^p,0)$ be an analytic map germ.
We denote by $T(h)$ the submodule of $\O_n^p$ given by
$D(h)+\langle h_1,\dots, h_p\rangle\O_n^p$.
That is, $T(h)$ is generated by the columns of the following matrix:
\begin{equation}\label{matrixTh}
T(h)=\left[
  \begin{array}{ccccccccccccc}
\frac{\partial h_1}{\partial x_1}&\cdots & \frac{\partial h_1}{\partial x_n}& h_1 & \cdots & h_p & 0 & \cdots &0 & \cdots & 0 &\cdots & 0 \\
\frac{\partial h_2}{\partial x_1}&\cdots & \frac{\partial h_2}{\partial x_n}& 0 & \cdots & 0 & h_1 & \cdots & h_p &  & 0 &\cdots & 0 \\
\vdots &  & \vdots & \vdots &  & \vdots & \vdots &  & \vdots & \ddots & \vdots  & & \vdots \\
\frac{\partial h_p}{\partial x_1}&\cdots & \frac{\partial h_p}{\partial x_n}& 0 & \cdots & 0 & 0 & \cdots &0 &  & h_1 & \cdots & h_p
  \end{array}
\right].
\end{equation}
It is well known that this module is the tangent space to the orbit of $h$ under the action of Mather's contact group $\mathcal K$
(see for instance \cite{Wall.ProcLMS}).

Let $h=(h_1,\dots, h_p):(\C^n,0)\to (\C^p,0)$ be an analytic map germ, $p\leq n$. We denote by $\J(h_1,\dots,h_p)$
the ideal of $\O_n$ generated by the maximal minors of the differential matrix $D(h_1,\dots, h_p)$. We recall that $h$ is called an
{\it isolated complete intersection singularity}, or an {\icis}, for short, when the dimension of $h^{-1}(0)$ equals $n-p$ and
the ideal $\langle h_1,\dots, h_p\rangle +\J(h_1,\dots, h_p)$ has finite colength in $\O_n$. In this case, the zero set
 $h^{-1}(0)$ is also called an {\icis}. We recall that, if $h$ is an {\icis} with $n-p\geq 1$, then the ideal of $\O_n$ generated by
 the component functions of $h$ is radical (see \cite[p.\,7]{Looijenga}).

Let $h:(\C^n,0)\to (\C^p,0)$ be an {\icis}. We denote by $\mu(h)$ the Milnor number of $h$ (see for instance
\cite{Greuel,Le,Looijenga}). The {\it Tjurina number} of $h$ is defined as follows (see \cite[p.\,246]{GLS}):
\begin{equation}\label{tauicis}
\tau(h)=\dim_\C\frac{\O_n^p}{T(h)}.
\end{equation}
Usually the Milnor number of $h$ is computed by means of the Lê-Greuel formula
(see Greuel \cite[p.\,263]{Greuel} and Lê \cite[p.\,130]{Le}). As a consequence, if the map $(h_1,\dots, h_r)$ is an
 \textsc{icis}, for all $r\in\{1,\dots,
p\}$, then the Milnor number of $h$ is given by the following
formula:
\begin{equation}\label{LeGr}
\mu(h)=\sum_{r=1}^p (-1)^{p-r}\dim_\C\frac{\O_n}{\langle h_1,\dots,
h_{r-1}\rangle +  \mathbf J(h_1,\dots, h_r)},
\end{equation}
where $h_0$ is defined as the zero function.

In \cite{LS} it is proven that if $h:(\C^n,0)\to (\C^p,0)$ is an {\icis} such that $n-p\geq 1$, then $\mu(h)\geq \tau(h)$
and both numbers coincide provided that $h$ is weighted homogeneous (see \cite{GreuelDualitat, Vosegaard}).
In \cite{Al} the case where $h$ is zero-dimensional is thoroughly analysed, in particular it is proven that $\mu(h)\leq \tau(h)$ in this case. Let us remark that even if $h$ is an {\icis} whose component functions are
homogeneous polynomials of the same degrees, if $h^{-1}(0)$ is zero-dimensional, then $\mu(h)\neq\tau(h)$, in general. For instance, if $h:(\C^2,0)\to (\C^2,0)$ is given by $h(x,y)=(x^3+y^3, x^2y)$, then $\mu(h)=8$ and $\tau(h)=11$.

%%%%%%%%%%%%%%%%%%%%%%%%%%%%%%%%%%%%%%%%%%%%%%%%%%%
\begin{rem}\label{introdThetaX}   %[label=(\alph*),labelwidth=!,labelindent=0pt,wide]
\begin{enumerate}[label=(\alph*),wide]%\renewcommand{\labelenumi}{(\alph{enumi})}%\itemindent=0pt
\item\label{adeintrodThetaX} Let $X$ be an analytic subvariety of $(\C^n,0)$ and let $I=I(X)$, that is, $I$ is the ideal of $\O_n$ formed by those function germs vanishing on $X$. Let $\Theta_X$ be the module of those derivations $\delta\in \Der(\O_n)$
for which $\delta(I)\subseteq I$. This module is usually known as the module of {\it logarithmic derivations of $X$} and is also denoted by $\Derlog(X)$ (see for instance \cite{BR, DamonMemoirsAMS}). If $h:(\C^n,0)\to (\C^p,0)$ is an analytic map whose component functions generate $I$,
we have $\Theta_X=\Theta_{h,I}$, according with the notation introduced in Definition \ref{ThetaC}\,\ref{notacio}.
In order to obtain $\Theta_X$, we can apply Lemma \ref{syzyg}
by replacing the matrix in (\ref{matrixDerlog}) by the matrix $T(h)$ defined in (\ref{matrixTh}).
\item\label{defmuX} Keeping the notation used above, given any function germ $f\in\O_n$, we denote the ideal $J_{h,I(X)}(f)$ simply by $J_X(f)$. Let us recall that the colength of the ideal $J_X(f)$ in $\O_n$, when finite, is usually called the {\it Bruce-Roberts number of $f$ with respect to $X$}
(see for instance \cite{BiviaRuas, Kour, LNOT4, LNOT5}) and we will denote it by $\mu_X(f)$.
This number generalizes the Milnor number
of a function (by considering the case $X=(\C^n,0)$) and has been extensively studied in \cite{BR}.
We will refer to the ideals of the form $J_X(f)$, for some $f\in\O_n$ and some analytic subvariety $X$ of $(\C^n,0)$
as {\it logarithmic ideals}.
\end{enumerate}
\end{rem}

The following result is motivated mainly by \cite[Proposition 1]{HRR} and \cite[Proposition 2.1]{MRT}.
As we will see in subsequent sections, this result will lead us to derive fundamental formulas connecting several numerical invariants.

\begin{thm}\label{claudetot} Let $h:(\C^n,0)\to (\C^p,0)$ and $f:(\C^n,0)\to (\C^q,0)$ be analytic map germs and let $M\subseteq \O_n^p$
and $N\subseteq\O_n^q$ be submodules.
Then the following short exact sequence is exact:
\begin{equation}\label{ses}
\xymatrix@C=0.2cm@R=1.5ex{
0 \ar[rr] &&  \displaystyle \frac{\O_n^q}{ J_{h,M}(f)+N} \ar[rr]^-{\ovarphi} &&
\displaystyle\frac{\O^{q+p}_n}{D(f,h)+(N\oplus M)}  \ar[rr]^-{\opsi}  && \displaystyle    \frac{\O^p_n}{D(h)+M}  \ar[rr] && 0
}
\end{equation}
where $\ovarphi$ and $\opsi$ are the morphisms induced by the inclusion $g\mapsto (g,0)$, for all $g\in \O_n^q$, and the projection
onto the last $p$ components, respectively. In particular,
the submodule $D(f,h)+(N\oplus M)$ of $\O_n^{q+p}$ has finite colength if and only if
the submodules $J_{h,M}(f)+N$ and $D(h)+M$ have finite colength. In this case, we obtain the following relation
\begin{equation}\label{ses2}
\dim_\C\frac{\O_n^q}{ J_{h,M}(f)+N}=\dim_\C\frac{\O^{q+p}_n}{D(f,h)+(N\oplus M)} -\dim_\C\frac{\O^p_n}{D(h)+M}.
\end{equation}
\end{thm}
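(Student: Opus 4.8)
The plan is to establish the exact sequence (\ref{ses}) by direct verification --- showing that $\ovarphi$ and $\opsi$ descend to the indicated quotients, that $\ovarphi$ is injective, that $\opsi$ is surjective, and that $\operatorname{im}\ovarphi=\ker\opsi$ --- and then to read off (\ref{ses2}) from additivity of dimension for short exact sequences of $\C$-vector spaces. The single identity that drives every step is that, for a derivation $\delta=(\delta_1,\dots,\delta_n)\in\O_n^n$, one has $\sum_{i=1}^n \delta_i\,\frac{\partial(f,h)}{\partial x_i}=(\delta(f),\delta(h))$ inside $\O_n^{q+p}=\O_n^q\oplus\O_n^p$. This is exactly the bridge between the Jacobian module $D(f,h)$ and the derivation action defining $\Theta_{h,M}$ and $J_{h,M}(f)$.

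First I would check that the two maps descend to the quotients. For $\ovarphi$, induced by $g\mapsto(g,0)$, it suffices to send the generators of $J_{h,M}(f)+N$ into $D(f,h)+(N\oplus M)$: a generator $\delta(f)$ with $\delta\in\Theta_{h,M}$ gives $(\delta(f),0)=(\delta(f),\delta(h))-(0,\delta(h))$, where the first summand lies in $D(f,h)$ and the second in $0\oplus M\subseteq N\oplus M$ precisely because $\delta(h)\in M$; and $N\oplus 0\subseteq N\oplus M$ handles the summand $N$. For $\opsi$, the projection onto the last $p$ components carries each $\frac{\partial(f,h)}{\partial x_i}$ to $\frac{\partial h}{\partial x_i}\in D(h)$ and $N\oplus M$ onto $M$, so it descends; surjectivity is immediate, lifting any $b\in\O_n^p$ to $(0,b)$.

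The heart of the argument is the injectivity of $\ovarphi$ together with exactness in the middle, and both reduce to the same component-comparison. If $(g,0)\in D(f,h)+(N\oplus M)$, I would write $(g,0)=(\delta(f),\delta(h))+(n',m')$ with $\delta\in\O_n^n$, $n'\in N$, $m'\in M$; equating second components forces $\delta(h)=-m'\in M$, whence $\delta\in\Theta_{h,M}$ and $g=\delta(f)+n'\in J_{h,M}(f)+N$, proving injectivity. For exactness, $\opsi\circ\ovarphi=0$ is clear, and conversely a class $[(a,b)]$ in $\ker\opsi$ has $b\in D(h)+M$, so $b=\delta(h)+m'$ for suitable $\delta$ and $m'\in M$; subtracting $(\delta(f),\delta(h))+(0,m')\in D(f,h)+(N\oplus M)$ from $(a,b)$ leaves $(a-\delta(f),0)$, exhibiting the class in $\operatorname{im}\ovarphi$.

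Finally, (\ref{ses2}) and the finiteness equivalence follow formally: in a short exact sequence $0\to A\to B\to C\to 0$ of $\C$-vector spaces, $B$ is finite-dimensional if and only if both $A$ and $C$ are, and then $\dim_\C B=\dim_\C A+\dim_\C C$, which is exactly (\ref{ses2}) after rearrangement. I do not expect a genuine obstacle here; the only thing demanding care is bookkeeping the splitting $\O_n^{q+p}=\O_n^q\oplus\O_n^p$ and invoking the membership $\delta(h)\in M$ at exactly the point where one passes between $\Theta_{h,M}$ and the Jacobian module $D(f,h)$. This is where the definitions of $\Theta_{h,M}$ and $J_{h,M}(f)$ are genuinely used, and it is what allows the middle term to record the data of $f$ and $h$ simultaneously.
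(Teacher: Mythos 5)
Your proposal is correct and follows essentially the same route as the paper's own proof: the same induced maps $\ovarphi$ and $\opsi$, the same component-comparison argument for injectivity and well-definedness (using $\delta(h)\in M$ to pass between $\Theta_{h,M}$ and $D(f,h)+(N\oplus M)$), the same subtraction of $(\delta(f),\delta(h))+(0,m)$ for exactness in the middle, and the same additivity of dimension to conclude (\ref{ses2}). No gaps.
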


\begin{proof}
%%%%%%%%%%%%%%%%%%%%%%%%%%%%%%%%%%%%%%%%%%%%%%%%%%%%%%%%%%%%%%%%%%%%%%%%%%%%%%%%%%%%%%%%%%%%%%%%%%%%%%%%%%%%%%%%%%%%%%%%%%%
Let $\varphi:\O_n^q\to \O_n^{q+p}$ denote the inclusion $g\mapsto (g,0)$, for all $g\in \O_n^q$, and let
$\psi:\O_n^{q+p}\to \O_n^p$ be the projection onto the last $p$ components. Clearly we have $\psi\circ\varphi=0$.

Let $g\in  J_{h,M}(f)+N$. Then $g=\delta(f)+a$, for some $\delta\in \Theta_{h,M}$ and some $a \in N$.
By the definition of $\Theta_{h,M}$, there exists an element $m=(m_1,\dots, m_p)\in M$ for which
$\delta(h)+m=0$. Let us write $h=(h_1,\dots, h_p)$.
Thus $(g,0,\dots, 0)=(\delta(f),\delta(h_1),\dots,\delta(h_p))+(a,m_1,\dots, m_p)\in D(f,h)+(N\oplus M)$.
That is $\varphi(J_{h,M}(f)+N)\subseteq D(f,h)+(N\oplus M)$.
Obviously $\psi(D(f,h)+(N\oplus M))=D(h)+M$. Thus the induced
morphisms $\ovarphi$ and $\opsi$ are well defined.
%%%%%%%%%%%%%%%%%%%%%%%%%%%%%%%%%%%%%%%%%%%%%%%%%%%%%%%%%%%%%%%%%%%%%%%%%%%%%%%%%%%%%%%%%%%%%%%%%%%%%%%%%%%%%%%%%%%%%%%%%%%

Given any element $g\in \O_n^q$, the condition $(g,0,\dots, 0)\in D(f,h)+(N\oplus M)$ is equivalent to saying that
there exists some derivation $\delta \in \O_n^n$ and some $(a,b)\in N\oplus M$ such that $g=\delta(f)+a$ and $0=\delta(h)+b$,
which in turn is equivalent to the condition $g\in J_{h,M}(f)+N$.
Hence the morphism $\ovarphi$ is injective. Clearly $\opsi$ is surjective.
%%%%%%%%%%%%%%%%%%%%%%%%%%%%%%%%%%%%%%%%%%%%%%%%%%%%%%%%%%%%%%%%%%%%%%%%%%%%%%%%%%%%%%%%%%%%%%%%%%%%%%%%%%%%%%%%%%%%%%%%%%%

Let us see the exactness of (\ref{ses}) at the middle module of (\ref{ses}).
Let $g\in \O_n^q$ and let $g' \in D(h)+M\subseteq \O_n^p$.
Then there exist a derivation $\delta\in \O_n^n$ and
an element $m=(m_1,\dots, m_p)\in M$ such that $g'=\delta(h)+m$.
Therefore we have that
\begin{align*}
(g,g')-(g-\delta(f),0)=(\delta(f), \delta(h))+
(0, m)&\in  D(f,h)+(\{0\}\oplus M)\\
&\subseteq D(f,h)+(N\oplus M).
\end{align*}
That is, $\overline\varphi([g-\delta(f)])=
[(g,g')]$, where the brackets denote the corresponding classes modulo $J_{h,M}(f)+N$ and
$D(f,h)+(N\oplus M)$, respectively. This shows that the kernel of $\opsi$ is contained in the image of $\ovarphi$.
Hence (\ref{ses}) is exact.

Relation (\ref{ses2}) follows as an immediate application of the exactness of (\ref{ses}).
\end{proof}

\begin{rem}\label{Diagr}
The short exact sequence (\ref{ses}) above is of significant importance in the deformation theory of divergent diagrams of maps. Indeed, let us denote by $\mathcal K^{(2)}$ the subgroup of the contact group naturally acting on such diagrams. Two diagrams $g=(f,h)$ and $g'=(f',h')$ are $\mathcal K^{(2)}$-equivalent if there exists a diffeomorphism $\phi:(\C^n,0)\to (\C^n,0)$ in the source and a matrix-valued map
$M:\C^n\rightarrow \GL(\C^q)\oplus \GL(\C^p)$, $M=M_1\oplus M_2$, such that $g(\phi(x))=M(x)g'(x)$, or equivalently
\begin{align*}
f(\phi(x))&=M_1(x)f'(x)\\
h(\phi(x))&=M_2(x)h'(x).
\end{align*}

Let $f:(\C^n,0)\to (\C^q,0)$ and $h:(\C^n,0)\to (\C^p,0)$ be analytic map germs. Let us consider now two submodules $N\subseteq\O_n^q$ and $M\subseteq\O_n^p$ which are geometric for $f$ and $h$, with respect to some geometric subgroups $\mathcal{G}'\subseteq \mathcal{K}$ and $\mathcal{G}\subseteq \mathcal{K}$ acting on $\O_n^q$ and $\O_n^p$, respectively. When the module $M\oplus N\subseteq \O_n^{q+p}$ is also geometric for the divergent diagram $g=(f,h),$  the submodule $D(f,h)+(N\oplus M)\subseteq \O_n^{q+p}$ represents the tangent space to the orbit of $(f,h)$ under $(\mathcal{G}',\mathcal{G})$-equivalence, and the quotient module $\frac{\O_n^{q+p}}{D(f,h)+N\oplus M}$ is
	the module of infinitesimal deformations (also called normal space in \cite[\S 3]{Damonsections}) of the $(\mathcal{G}',\mathcal{G})$-equivalence class of the pair $(f,h)$.

The colenght of the module  $D(f,h)+(N\oplus M)$ is an analytic invariant
	of  $(\mathcal{G}',\mathcal{G})$-equivalence, called the  $(\mathcal{G}',\mathcal{G})_{e}$-codimension of $(f,h).$
	Let us denote by $\mathcal{G}'_0$ the subgroup of $\mathcal{G}'$ consisting of diffeomorphisms which preserve a representative of the $\mathcal{G}$-equivalence class of $h$.
Then, the short exact sequence (5)  relates the spaces of infinitesimal deformations of the  $(\mathcal{G}',\mathcal{G})$, $\mathcal{G}'_0$ and $\mathcal{G}$-equivalence classes of $(f,h)$, $f$ and $h$, respectively, providing formulas relating their codimensions.

As we will see below, and it will be analysed in more detail in the next sections, the above short exact sequences relate the basic relative invariants of the map $f$ with respect to $X=h^{-1}(0)$, to the invariants of the diagram $(f,h)$ and of the map $h$ as well, all given by the codimensions of the tangent spaces of their corresponding orbits.
\end{rem}

Let $h=(h_1,\dots,h_p):(\C^n,0)\to (\C^p,0)$ be an analytic map germ and let
$M\subseteq \O_n^p$ be a submodule. Motivated by the definition of Tjurina number of an {\icis} (see (\ref{tauicis})), and in order to simplify the notation, we define the number $\t_M(h)$ given by
\begin{equation}\label{tauM}
\t_M(h)=\dim_\C\frac{\O_n^p}{D(h)+M}
\end{equation}
whenever the module $D(h)+M$ has finite colength in $\O_n^p$. Analogous to
Definition \ref{ThetaC}\,\ref{notacio}, if $I$ is an ideal of $\O_n$ then we will denote $\t_{I^{\oplus p}}(h)$
simply by $\t_{I}(h)$. Obviously, if $h$ is an {\icis} and $I$ denotes the ideal $\langle h_1,\dots,h_p\rangle\subseteq \O_n$,
then $\t_I(h)=\tau(h)$.

\begin{cor}\label{diffsym} Let $h:(\C^n,0)\to (\C^p,0)$ and $f:(\C^n,0)\to (\C^q,0)$ be analytic map germs and let $M\subseteq \O_n^p$
and $N\subseteq\O_n^q$ be submodules. Then the following conditions are equivalent:
\begin{enumerate}[label=\textnormal{(\alph*)}]
\item $\t_{M\oplus N}(h,f)$ is finite.
\item the modules $J_{h,M}(f)+N\subseteq \O_n^q$ and $D(h)+M\subseteq \O_n^p$ have finite colength.
\item the modules $J_{f,N}(h)+M\subseteq \O_n^p$ and $D(f)+N\subseteq \O_n^q$ have finite colength.
\end{enumerate}
Moreover, under any of the above conditions, the following relation holds:
%\begin{equation}\label{ses4}
%\dim_\C\frac{\O^q_n}{J_{h,M}(f)+N}-\dim_\C\frac{\O^p_n}{J_{f,N}(h)+M}=
%\dim_\C\frac{\O^q_n}{D(f)+N}-\dim_\C\frac{\O^p_n}{D(h)+M}.
%\end{equation}
\begin{equation}\label{ses4}
\dim_\C\frac{\O^q_n}{J_{h,M}(f)+N}-\dim_\C\frac{\O^p_n}{J_{f,N}(h)+M}=
\t_N(f)-\t_M(h).
\end{equation}
\end{cor}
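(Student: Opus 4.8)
The plan is to deduce everything from Theorem \ref{claudetot}, applied twice: once to the pair $(h,M),(f,N)$ in the stated orientation, and once with the roles of $f$ and $h$ (and of $N$ and $M$) interchanged. The whole statement is governed by a symmetry of the setup, so the first thing to record is the elementary observation that the two ``large'' quotients produced by these two applications are canonically isomorphic. Concretely, the coordinate swap $\sigma:\O_n^{q+p}\to\O_n^{p+q}$, $(u,v)\mapsto(v,u)$ with $u\in\O_n^q$ and $v\in\O_n^p$, is an $\O_n$-module isomorphism carrying $D(f,h)$ onto $D(h,f)$, since it sends each generator $\frac{\partial(f,h)}{\partial x_i}$ to $\frac{\partial(h,f)}{\partial x_i}$, and carrying $N\oplus M$ onto $M\oplus N$. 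Hence $\sigma$ induces an isomorphism of quotients, and in particular
\[
\t_{N\oplus M}(f,h)=\t_{M\oplus N}(h,f),
\]
both sides being the colength of the same submodule up to reordering the $p+q$ target coordinates. In particular condition (a) does not depend on the orientation chosen.

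For the equivalences I would apply the ``finite colength'' clause of Theorem \ref{claudetot} directly. In the stated orientation it gives that $D(f,h)+(N\oplus M)$ has finite colength if and only if $J_{h,M}(f)+N$ and $D(h)+M$ have finite colength, which is exactly (a)$\iff$(b). Applying the same clause after swapping $f\leftrightarrow h$ and $N\leftrightarrow M$, and using $\sigma$ to identify the resulting middle term with the one appearing in (a), gives that this same module has finite colength if and only if $J_{f,N}(h)+M$ and $D(f)+N$ have finite colength, which is (a)$\iff$(c). Thus (a), (b), (c) are equivalent.

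Finally, for the numerical relation I would subtract the two instances of the dimension formula (\ref{ses2}). The stated orientation yields
\[
\dim_\C\frac{\O_n^q}{J_{h,M}(f)+N}=\t_{N\oplus M}(f,h)-\t_M(h),
\]
and the swapped orientation (with $p,q$ and $M,N$ interchanged) yields
\[
\dim_\C\frac{\O_n^p}{J_{f,N}(h)+M}=\t_{M\oplus N}(h,f)-\t_N(f).
\]
Subtracting these two equalities, the two ``large'' terms cancel by the identity $\t_{N\oplus M}(f,h)=\t_{M\oplus N}(h,f)$ established above, leaving precisely $\t_N(f)-\t_M(h)$, which is (\ref{ses4}).

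There is essentially no analytic obstacle here, as Theorem \ref{claudetot} carries all the weight and the argument is purely formal. The only point that genuinely requires a word of care is verifying that the coordinate swap $\sigma$ really identifies $D(f,h)+(N\oplus M)$ with $D(h,f)+(M\oplus N)$, so that the middle terms of the two short exact sequences coincide and cancel cleanly; once this is checked, both the equivalence of the three finiteness conditions and the difference formula follow at once.
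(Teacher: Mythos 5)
Your proposal is correct and follows essentially the same route as the paper: both arguments apply Theorem \ref{claudetot} twice, once in each orientation, use the canonical coordinate-swap identification of $D(f,h)+(N\oplus M)$ with $D(h,f)+(M\oplus N)$ to equate the middle terms, and obtain (\ref{ses4}) by subtracting the two instances of (\ref{ses2}).
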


\begin{proof}
%By the exactness of (\ref{ses}), condition (a) is equivalent to saying that
%the module $D(f,h)+(N\oplus M)$ has finite colegth in $\O_n^{q+p}$.
Let us remark that the submodules $D(f,h)+(N\oplus M)$ and
$D(h,f)+(M\oplus N)$ of $\O_n^{q+p}$ are canonically isomorphic. In particular, by permuting the roles of the
pairs $(h,M)$ and $(f,N)$ in the sequence (\ref{ses}), the exactness of (\ref{ses}) shows the equivalence between (a), (b) and (c).

If (c) holds, by (\ref{ses2}) and the notation introduced in (\ref{tauM}), we obtain that
\begin{equation}\label{ses3}
\dim_\C\frac{\O_n^p}{ J_{f,N}(h)+M}=\t_{M\oplus N}(h,f)-\t_{N}(f).
\end{equation}
Hence, since $\t_{M\oplus N}(h,f)=\t_{N\oplus M}(f,h)$, equality (\ref{ses4}) follows by writing the difference of relations (\ref{ses2}) and (\ref{ses3}).
\end{proof}

If $h:(\C^n,0)\to (\C^p,0)$ is an analytic map germ, we denote by $h^*:\O_p\to \O_n$ the ring morphism obtained by composition with $h$.
Given an integer $r\geq 1$, we will use the same notation for the componentwise extension of this ring morphism leading to a morphism of $\O_p$-modules $\O_p^r\to \O_n^r$ (where we consider the natural structure of $\O_n^r$ as $\O_p$-module via $h^*$).

\begin{ex}
Let $V\subseteq (\C^3,0)$ be the variety given by $V=\varphi^{-1}(0)$, where $\varphi:(\C^3,0)\to (\C,0)$
is the weighted homogeneous polynomial defined by
$$
\varphi(x,y,z)=256z^3-27x^4+144x^2yz-128y^2z^2-4x^2y^3+16y^4z
$$
for all $(x,y,z)\in\C^3$.
This variety, known as the {\it swallowtail} \cite{A2, BR}, is a free divisor. That is, $\Theta_X$ is a free submodule of $\O_3^3$ (see \cite{BR, DamonMemoirsAMS}). Let us consider now the diagram
\begin{equation}\label{DD2}
(\C^3,0)\stackrel{h}{\longleftarrow}(\C^2,0)\stackrel{f}{\longrightarrow}(\C^3,0)
\end{equation}
where $f$ and $h$ are the immersive maps given by $f(u,v)=(u^2,u,v)$ and $h(u,v)=(u,0,v)$, for all $(u,v)\in \C^2$.
Let $M$ and $N$ be the submodules of $\O_2^3$ given by $M=h^*(\Theta_V)$ and $N=f^*(\Theta_V)$.
We notice that the $\mathcal K_{V,e}$-codimensions of $f$ and $h$ are given by
$t_N(f)=2$ and $t_M(h)=1$, respectively (see \cite{DamonAJM,Damonsections}). We can interpret $t_N(f)-t_M(h)$ as a measure that reflects the difference between the contact
with $V$ of the sections $h$ and $f$. By (\ref{ses4}) we also have
$$
\t_N(f)-\t_M(h)=\dim_\C\frac{\O^3_2}{J_{h,M}(f)+N}-\dim_\C\frac{\O^3_2}{J_{f,N}(h)+M}.
$$
Let us remark that, since $t_{M\oplus N}(h,f)=9$, relation (\ref{ses2}) shows that
\begin{equation}\label{doscods}
\dim_\C\frac{\O_2^3}{J_{h,M}(f)+N}=8,\hspace{1cm} \dim_\C\frac{\O_2^3}{J_{f,N}(h)+M}=7.
\end{equation}

By using Singular \cite{Singular} we obtained that the submodules $\Theta_{h,M}$ and $\Theta_{f,N}$ of $\O_2^2$ are generated by the columns of the matrices
$$
\left[
             \begin{array}{cc}
              3u & 64v^2 \\
              4v & 9u^3
             \end{array}
             \right]
\hspace{0.3cm}\textnormal{and}\hspace{0.3cm}
\left[
             \begin{array}{cc}
              8u^3+27u^4 &  16v-20u^2-72u^3 \\
              16v^2+12u^2v+72u^3v+3u^5 & -32uv-192u^2v-7u^4
             \end{array}
             \right]
$$
respectively. By using the above systems of generators we can also obtain (\ref{doscods}) directly, by using the
definitions of the ideals $J_{h,M}(f)$ and $J_{f,N}(h)$.
\end{ex}

The next result relates the colength of the submodule $J_{h,M}(f)+N\subseteq \O_n^q$ with the colength of the submodule $\Theta_{h, M}+\Theta_{f, N}\subseteq \O_n^n$.

\begin{prop}\label{prop-1}
Let $h:(\C^n,0)\to (\C^p,0)$ be an analytic map germ and let $M\subseteq \O_n^p$ be a submodule.
Let $f:(\C^n,0)\to (\C^q,0)$ be an analytic map germ such that the submodule $J_{h, M}(f)+N\subseteq \O_n^q$ has finite colength, for
a given submodule $N\subseteq \O_n^q$.
Then the following relation holds (where all quotient modules involved have finite length):
\begin{equation}\label{es2KM}
\dim_\C\frac{\O^q_n}{J_{h, M}(f)+N}=\t_N(f)+\dim_\C\frac{\O_n^n}{\Theta_{h, M}+\Theta_{f, N}}.
\end{equation}
In particular, when $q=1$, and supposing that the ideal $J_{h,M}(f)$ has finite colength in $\O_n$, we obtain:
\begin{align}
\dim_\C\frac{\O_n}{J_{h, M}(f)}-\dim_\C\frac{\O_n}{J_{h, M}(f)+N}&=\mu(f)-\t_N(f)+\dim_\C\frac{\Theta_{h,M}+\Theta_{f,N}}{\Theta_{h,M}+H_f} \nonumber\\
&=\mu(f)-\t_N(f)+\dim_\C\frac{\Theta_{f,N}}{(\Theta_{f,N}\cap\Theta_{h,M})+H_f}.\label{numinustau2K}
\end{align}
\end{prop}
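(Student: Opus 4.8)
The plan is to deduce everything from a single homomorphism together with additivity of colengths along an inclusion chain. For the main formula (\ref{es2KM}) I would introduce the $\O_n$-linear map
$$
\Phi\colon \O_n^n\to \O_n^q/N,\qquad \delta\mapsto \delta(f)+N,
$$
whose image is precisely $(D(f)+N)/N$, since $\delta(f)=\sum_i\delta_i\frac{\partial f}{\partial x_i}$ ranges over $D(f)$ as $\delta$ ranges over $\O_n^n$. By the definition of $\Theta_{f,N}$ in Definition \ref{ThetaC}\,\ref{apb} the kernel of $\Phi$ is $\Theta_{f,N}$, and the image of $\Theta_{h,M}$ under $\Phi$ is $(J_{h,M}(f)+N)/N$ (note that $\{\delta(f):\delta\in\Theta_{h,M}\}$ is already a submodule, hence equals $J_{h,M}(f)$). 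Passing to the quotient by $\Theta_{f,N}=\ker\Phi$ and using that $\Theta_{f,N}\subseteq \Theta_{h,M}+\Theta_{f,N}$, the map $\Phi$ induces an isomorphism
$$
\frac{\O_n^n}{\Theta_{h,M}+\Theta_{f,N}}\;\cong\;\frac{D(f)+N}{J_{h,M}(f)+N}.
$$

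Then from the inclusions $J_{h,M}(f)+N\subseteq D(f)+N\subseteq \O_n^q$ (the first because every $\delta(f)$ lies in $D(f)$), additivity of $\dim_\C$ gives
$$
\dim_\C\frac{\O_n^q}{J_{h,M}(f)+N}=\dim_\C\frac{\O_n^q}{D(f)+N}+\dim_\C\frac{D(f)+N}{J_{h,M}(f)+N},
$$
and the two terms on the right are $\t_N(f)$ (by (\ref{tauM})) and $\dim_\C \O_n^n/(\Theta_{h,M}+\Theta_{f,N})$ by the displayed isomorphism; this is exactly (\ref{es2KM}). Finiteness of every quotient involved follows from the hypothesis that $J_{h,M}(f)+N$ has finite colength, which forces $D(f)+N$ to have finite colength as well.

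For (\ref{numinustau2K}) with $q=1$ I apply (\ref{es2KM}) twice. Taking $N=0$ (legitimate since $J_{h,M}(f)\subseteq J_{h,M}(f)+N$ forces $J_{h,M}(f)$ to have finite colength, whence $\mu(f)=\dim_\C \O_n/J(f)$ is finite) yields $\dim_\C \O_n/J_{h,M}(f)=\mu(f)+\dim_\C \O_n^n/(\Theta_{h,M}+H_f)$, using $\t_0(f)=\mu(f)$ and $\Theta_{f,0}=H_f$. Subtracting the general case from this, and observing that $H_f=\Theta_{f,0}\subseteq\Theta_{f,N}$ so that $\Theta_{h,M}+H_f\subseteq\Theta_{h,M}+\Theta_{f,N}$, the difference of the two $\O_n^n$-quotient dimensions collapses to $\dim_\C\frac{\Theta_{h,M}+\Theta_{f,N}}{\Theta_{h,M}+H_f}$, which is the first equality. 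For the second equality I invoke the modular law: writing $A=\Theta_{h,M}$, $B=\Theta_{f,N}$, $C=H_f$ with $C\subseteq B$, one has $B\cap(A+C)=(A\cap B)+C$, so the natural map $B\to (A+B)/(A+C)$ is surjective with kernel $(A\cap B)+C$, giving $\frac{\Theta_{f,N}}{(\Theta_{f,N}\cap\Theta_{h,M})+H_f}\cong\frac{\Theta_{h,M}+\Theta_{f,N}}{\Theta_{h,M}+H_f}$.

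I expect the only genuinely delicate point to be the correct identification of $\Phi(\Theta_{h,M})$ as $(J_{h,M}(f)+N)/N$ rather than $J_{h,M}(f)/N$, and keeping careful track of which quotient is which so that the two applications of (\ref{es2KM}) subtract cleanly; the remaining steps are routine bookkeeping with the isomorphism theorems and the modular law.
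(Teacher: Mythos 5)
Your proof is correct and follows essentially the same route as the paper: your map $\Phi$ is exactly the paper's evaluation morphism $df$ composed with reduction modulo $N$, your displayed isomorphism is precisely the content of the paper's exact sequence involving $\O_n^n/(\Theta_{h,M}+\Theta_{f,N})$, and the subtraction of the $N=0$ case together with the modular-law isomorphism matches the paper's argument step for step. One small slip: finiteness of the colength of $J_{h,M}(f)+N$ does \emph{not} force $J_{h,M}(f)$ to have finite colength (the inclusion $J_{h,M}(f)\subseteq J_{h,M}(f)+N$ gives the implication in the opposite direction); this is harmless only because the statement explicitly \emph{assumes} $\dim_\C \O_n/J_{h,M}(f)<\infty$ for the second part, and from that assumption the finiteness of $\mu(f)$ follows via $J_{h,M}(f)\subseteq J(f)$.
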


\begin{proof}
Let $df:\O_n^n\to \O_n^q$ be the evaluation map $\delta\longmapsto \delta(f)$, for any derivation $\delta\in \O_n^n$.
The inclusion $J_{h,M}(f)\subseteq D(f)$ and the morphism $df$ naturally induce the following exact sequences of $\O_n$-modules:
\begin{align}
&\xymatrix@C=0.2cm@R=1.5ex{
0 \ar[rr] &&  \displaystyle \frac{\O_n^n}{\Theta_{h,M}+H_f} \ar[rr]^-{df} &&
\displaystyle\frac{\O_n^q}{J_{h,M}(f)}  \ar[rr]  && \displaystyle\frac{\O_n^q}{D(f)}  \ar[rr] && 0
}\label{ses-1}\\
&\xymatrix@C=0.2cm@R=1.5ex{
0 \ar[rr] &&  \displaystyle \frac{\O_n^n}{\Theta_{h,M}+\Theta_{f, N}} \ar[rr]^-{df} &&
\displaystyle\frac{\O_n^q}{J_{h,M}(f)+N}  \ar[rr]   && \displaystyle\frac{\O_n^q}{D(f)+N}     \ar[rr] && 0
}\label{ses-2}
\end{align}
where the respective third morphisms of (\ref{ses-1}) and (\ref{ses-2}) are the natural projections.
The exactness of (\ref{ses-2}) gives relation (\ref{es2KM}).

In the rest of the proof we will assume that $q=1$ and that $J_{h,M}(f)$ has finite colength.
Let $N$ denote any ideal of $\O_n$. Then (\ref{es2KM}) says that
\begin{equation}\label{es2KM2}
\dim_\C\frac{\O_n}{J_{h, M}(f)+N}=\dim_\C\frac{\O_n}{J(f)+N}+\dim_\C\frac{\O_n^n}{\Theta_{h, M}+\Theta_{f, N}}.
\end{equation}
In particular, we obtain
\begin{equation}\label{es2KM20}
\dim_\C\frac{\O_n}{J_{h, M}(f)}=\mu(f)+\dim_\C\frac{\O_n^n}{\Theta_{h, M}+H_f}.
\end{equation}

The exactness of (\ref{ses-1}) shows that $\Theta_{h,M}+H_f\subseteq \O_n^n$
has finite colength (because we suppose that $J_{h,M}(f)$ has finite colength). As a consequence, since $H_f\subseteq \Theta_{f, N}$, we obtain the relation
\begin{equation}\label{hMfK}
\dim_\C\frac{\Theta_{h,M}+\Theta_{f,N}}{\Theta_{h,M}+H_f}=\dim_\C\frac{\O_n^n}{\Theta_{h,M}+H_f}-\dim_\C\frac{\O_n^n}{\Theta_{h,M}+\Theta_{f, N}},
\end{equation}
where all quotient modules involved in this equality have finite length.

We also have the following isomorphism of $\O_n$-modules (also as a consequence of the inclusion $H_f\subseteq \Theta_{f, N}$):
\begin{equation}\label{isomHf}
\frac{\Theta_{h,M}+\Theta_{f,N}}{\Theta_{h,M}+H_f}\cong \frac{\Theta_{f,N}}{\Theta_{f,N}\cap(\Theta_{h,M}+H_f)}=
\frac{\Theta_{f,N}}{(\Theta_{f,N}\cap\Theta_{h,M})+H_f}.
\end{equation}

Relation (\ref{hMfK}) and the difference of relations (\ref{es2KM20}) and (\ref{es2KM2}) gives the equality
$$
\dim_\C\frac{\O_n}{J_{h, M}(f)}-\dim_\C\frac{\O_n}{J_{h, M}(f)+N}=\mu(f)-\dim_\C\frac{\O_n}{J(f)+N}+\dim_\C\frac{\Theta_{h,M}+\Theta_{f,N}}{\Theta_{h,M}+H_f}.
$$
Hence (\ref{numinustau2K}) follows by applying the isomorphism (\ref{isomHf}) to the above relation.
\end{proof}

In general, the complexity of the computation of modules of the form $\Theta_{h,M}$ is high. However,
the next result shows that the colength of the module $\Theta_{h,M}+\Theta_{f,N}$ in $\O_n^n$ admits an expression
as the sum of colengths of simpler modules.

\begin{cor}\label{codsumats}
Let $h:(\C^n,0)\to (\C^p,0)$ and $f:(\C^n,0)\to (\C^q,0)$ be analytic map germs and
let $M\subseteq \O_n^p$ and $N\subseteq \O_n^q$ be submodules. Let us suppose that $\t_{M\oplus N}(h,f)$ is finite.
Then the following relation holds:
\begin{equation}\label{thetahMfN}
\dim_\C\frac{\O_n^n}{\Theta_{h,M}+\Theta_{f,N}}=\t_{M\oplus N}(h,f)-\t_{M}(h)-\t_{N}(f).
\end{equation}
\end{cor}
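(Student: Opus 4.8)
The plan is to combine relation (\ref{ses2}) of Theorem \ref{claudetot} with relation (\ref{es2KM}) of Proposition \ref{prop-1}, eliminating the common term $\dim_\C \O_n^q/(J_{h,M}(f)+N)$ between them. Essentially the whole content of the corollary is already packaged in these two quoted relations, so the work is bookkeeping together with a justification that every colength involved is finite.

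First I would invoke the hypothesis that $\t_{M\oplus N}(h,f)$ is finite. By Corollary \ref{diffsym}, this finiteness is equivalent to the simultaneous finiteness of the colengths of $J_{h,M}(f)+N\subseteq \O_n^q$ and $D(h)+M\subseteq \O_n^p$. In particular $J_{h,M}(f)+N$ has finite colength, so the hypotheses of Proposition \ref{prop-1} are met and all quotient modules appearing below have finite length; this legitimizes the subtractions performed at the end.

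Next I would rewrite relation (\ref{ses2}) in the notation of (\ref{tauM}). Since the submodules $D(f,h)+(N\oplus M)$ and $D(h,f)+(M\oplus N)$ of $\O_n^{q+p}$ are canonically isomorphic (as recalled in the proof of Corollary \ref{diffsym}), one has $\dim_\C \O_n^{q+p}/(D(f,h)+(N\oplus M))=\t_{M\oplus N}(h,f)$, while $\dim_\C \O_n^p/(D(h)+M)=\t_M(h)$ by definition. Hence (\ref{ses2}) becomes
\[
\dim_\C\frac{\O_n^q}{J_{h,M}(f)+N}=\t_{M\oplus N}(h,f)-\t_M(h).
\]

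Finally, relation (\ref{es2KM}) of Proposition \ref{prop-1} reads
\[
\dim_\C\frac{\O_n^q}{J_{h,M}(f)+N}=\t_N(f)+\dim_\C\frac{\O_n^n}{\Theta_{h,M}+\Theta_{f,N}}.
\]
Equating the two expressions for $\dim_\C \O_n^q/(J_{h,M}(f)+N)$ and solving for the last summand yields exactly (\ref{thetahMfN}). There is no genuine obstacle here beyond the careful identification of the colengths with the invariants $\t_{M\oplus N}(h,f)$, $\t_M(h)$ and $\t_N(f)$; the only subtle point is the appeal to Corollary \ref{diffsym} to guarantee finiteness throughout, without which the difference of colengths would be meaningless.
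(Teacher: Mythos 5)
Your proof is correct and follows essentially the same route as the paper: it invokes Corollary \ref{diffsym} to secure finiteness and then equates relation (\ref{ses2}) with relation (\ref{es2KM}) to isolate the colength of $\Theta_{h,M}+\Theta_{f,N}$. No issues.
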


\begin{proof} By Corollary \ref{diffsym}, the finiteness of $\t_{M\oplus N}(h,f)$ implies that
the modules $J_{h,M}(f)+N$ and $D(f)+N$ have finite colength in $\O_n^q$ and that the modules
$J_{f,N}(h)+M$ and $D(h)+M$ have finite colength in $\O_n^p$. By (\ref{ses2}) and (\ref{es2KM}) we have
$$
\t_{N\oplus M}(f,h)-\t_{M}(h)=\t_N(f)+\dim_\C\frac{\O_n^n}{\Theta_{h, M}+\Theta_{f, N}}.
$$
Hence (\ref{thetahMfN}) follows.
\end{proof}

As an immediate application of the previous result we obtain the following conclusion concerning the Tjurina number of an {\icis}.

\begin{cor}
Let $h:(\C^n,0)\to (\C^p,0)$ and $f:(\C^n,0)\to (\C^q,0)$ be analytic maps for which
$(f,h):(\C^n,0)\to (\C^{p+q},0)$ is an {\icis}, where $2\leq p+q\leq n$. Let $I$ denote the ideal of $\O_n$ generated by the component functions
of $(f,h)$. Then
\begin{equation}
\tau(f,h)=\dim_\C\frac{\O_n^n}{\Theta_{h,I}+\Theta_{f,I}}+\t_{I}(h)+\t_{I}(f).
\end{equation}
\end{cor}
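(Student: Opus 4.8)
The plan is to obtain the formula as a direct specialization of Corollary~\ref{codsumats}, applied with $M=I^{\oplus p}\subseteq\O_n^p$ and $N=I^{\oplus q}\subseteq\O_n^q$. With these choices the notation of Definition~\ref{ThetaC}\,\ref{notacio} identifies $\Theta_{h,M}=\Theta_{h,I}$ and $\Theta_{f,N}=\Theta_{f,I}$, while the convention introduced after (\ref{tauM}) identifies $\t_M(h)=\t_I(h)$ and $\t_N(f)=\t_I(f)$. Thus the three terms on the right of (\ref{thetahMfN}) are exactly $\dim_\C(\O_n^n/(\Theta_{h,I}+\Theta_{f,I}))$, $\t_I(h)$ and $\t_I(f)$, and the whole task reduces to identifying the remaining term $\t_{M\oplus N}(h,f)$ with $\tau(f,h)$ and checking that it is finite.

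To carry this out, I first note that $M\oplus N=I^{\oplus(p+q)}$, so the module appearing in the definition (\ref{tauM}) of $\t_{M\oplus N}(h,f)$ is $D(h,f)+I^{\oplus(p+q)}$. Since $I$ is generated precisely by the component functions $h_1,\dots,h_p,f_1,\dots,f_q$ of $(h,f)$, we have $I^{\oplus(p+q)}=\langle h_1,\dots,h_p,f_1,\dots,f_q\rangle\O_n^{p+q}$, and therefore $D(h,f)+(M\oplus N)$ is exactly the tangent module $T(h,f)$ of (\ref{matrixTh}). By the definition (\ref{tauicis}) its colength is $\tau(h,f)$.

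Next I would pass from $(h,f)$ to $(f,h)$. The linear automorphism of $\C^{p+q}$ exchanging the first $p$ and the last $q$ coordinates sends $T(f,h)$ to $T(h,f)$ (it permutes the rows of the Jacobian while fixing $I^{\oplus(p+q)}$), and likewise it preserves both the dimension condition and the finite-colength condition defining an {\icis}. Hence $(h,f)$ is an {\icis} because $(f,h)$ is, the module $T(h,f)$ has finite colength, and $\t_{M\oplus N}(h,f)=\tau(h,f)=\tau(f,h)$. In particular the finiteness hypothesis of Corollary~\ref{codsumats} holds.

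With these identifications substituted into (\ref{thetahMfN}) we get $\dim_\C(\O_n^n/(\Theta_{h,I}+\Theta_{f,I}))=\tau(f,h)-\t_I(h)-\t_I(f)$, and rearranging yields the claim. I do not anticipate a genuine obstacle: everything is bookkeeping on top of Corollary~\ref{codsumats}. The one point deserving care is that $I$ must be the ideal generated by \emph{all} the components of $(f,h)$ together (not by those of $h$ or of $f$ separately); it is precisely this choice that makes $D(h,f)+(M\oplus N)$ coincide with $T(h,f)$, so that $\t_{M\oplus N}(h,f)$ can be read off as $\tau(f,h)$ from (\ref{tauicis}).
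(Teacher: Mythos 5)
Your proposal is correct and follows essentially the same route as the paper: the authors likewise deduce the formula by substituting $M=I^{\oplus p}$ and $N=I^{\oplus q}$ into Corollary~\ref{codsumats}, observing that the {\icis} hypothesis on $(f,h)$ gives finiteness of $\t_I(f,h)$ and that $\t_{I}(f,h)=\tau(f,h)$ because $I$ is generated by all the components of $(f,h)$. Your extra bookkeeping (identifying $D(h,f)+I^{\oplus(p+q)}$ with $T(h,f)$ and handling the coordinate permutation between $(h,f)$ and $(f,h)$) just makes explicit what the paper leaves implicit.
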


\begin{proof}
Since we assume that $(f,h)$ is an {\icis}, we automatically have that $\t_{I}(f,h)$ is finite.
Hence the result follows by taking $M=I^{\oplus p}$ and $N=I^{\oplus q}$ in (\ref{thetahMfN}).
\end{proof}

In the remaining section we will show additional applications of Theorem \ref{claudetot}\,(\ref{ses2}) and Proposition \ref{prop-1}\,(\ref{es2KM}).
%relative to the ideals of the form $J_X(f)$ introduced in Remark \ref{introdThetaX}\,\ref{defmuX}.

Let $X$ be an analytic subvariety of $(\C^n,0)$ and let $f\in\O_n$.
As indicated in Remark \ref{introdThetaX}\,\ref{defmuX}, we denote by $\mu_X(f)$ the Bruce-Roberts number
of $f$ with respect to $X$. In addition to $\mu_X(f)$ we also consider the following numbers, which also involve the module $\Theta_X$:
\begin{align*}
\mu_X^-(f)&=\dim_\C\frac{\O_n}{J_X(f)+I(X)}\\
\tau_X(f)&=\dim_\C\frac{\mathcal O_n}{J_X(f)+\langle f \rangle}\\
\tau_X^-(f)&=\dim_\C\frac{\O_n}{J_X(f)+\langle f \rangle+I(X)},
\end{align*}
whenever the lengths of the quotient rings appearing on the right of the above definitions are finite.
The numbers $\mu_X^-(f)$ and $\tau_X(f)$ are known as the
the {\it relative Bruce-Roberts number of $f$ with respect to $X$}
(see \cite{BR,LNOT4, LNOT5})
and the {\it Bruce-Roberts' Tjurina number of $f$ with respect to $X$} (see \cite{BiviaRuas}), respectively.
By analogy, we also refer to $\tau_X^-(f)$ as the {\it relative Bruce-Roberts' Tjurina number of $f$ with respect to $X$}, i.\,e.\, we add the term {\it relative} to the express that we compute the colengths in the quotient ring $\O_n/I(X)$.

The effective computation of these numbers is in general a non-trivial problem, as it requires knowledge of an explicit system of generators
of the module $\Theta_X$. The next result helps to simplify the computation of $\mu_X(f)$, $\tau_X(f)$ and $\mu^-_X(f)$ when $X$ is
an {\icis}, since it gives an expression for these numbers as the difference of colengths of two modules.
The resulting formulas allows to understand the nature of these numbers and to relate them with other invariants.

\begin{cor}\label{segonaexp}
Let $h:(\C^n,0)\to (\C^p,0)$ be an {\icis} with $n-p\geq 1$, let $I=\langle h_1,\dots, h_p\rangle$ and let $X=h^{-1}(0)$.
Given a function germ $f\in\O_n$, we have that $\mu_X(f)<\infty$ if and only if the submodule
$D(f,h)+(0\oplus I^{\oplus p})$ has finite colength in $\O_n^{p+1}$. Moreover, in this case we have
\begin{align}
\mu_X(f)&=\dim_\C\frac{\O_n^{p+1}}{D(f,h)+(0\oplus I^{\oplus p} )}-\tau(h)  \label{formuXf0} \\
%&=\dim_\C\frac{I}{IJ(f)}+\mu(f,h)+\mu(h)-\tau(h)\label{formuXf}  \\
\tau_X(f)&=\dim_\C\frac{\O_n^{p+1}}{D(f,h)+(\langle f\rangle\oplus I^{\oplus p} )}-\tau(h)  \label{tauXficis}\\
\mu_X^-(f)&=\dim_\C\frac{\O_n^{p+1}}{D(f,h)+I^{\oplus (p+1)}}-\tau(h)  \label{nuXficis}\\
\tau_X^-(f)&=\dim_\C\frac{\O_n^{p+1}}{D(f,h)+\big((\langle f \rangle+I)\oplus I^{\oplus p} \big)}-\tau(h).  \label{relnuXficis}
\end{align}

In particular, we obtain the following relations:
\begin{align}
\mu_X(f)-\tau_X(f)&=\dim_\C\frac{D(f,h)+(\langle f\rangle \oplus I^{\oplus p})}{D(f,h)+(0 \oplus I^{\oplus p})} \label{nmt1}  \\
\mu_X^-(f)-\tau_X^-(f)&=\dim_\C\frac{D(f,h)+\big((\langle f \rangle+I)\oplus I^{\oplus p} \big)}{D(f,h)+I^{\oplus (p+1)}}.  \label{nmt2}
\end{align}
\end{cor}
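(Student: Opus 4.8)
The key observation is that all four numbers $\mu_X(f)$, $\tau_X(f)$, $\mu_X^-(f)$, $\tau_X^-(f)$ are colengths of ideals of the form $J_X(f)+N$ for a suitable choice of $N\subseteq\O_n$, where in every case $N$ is contained in $I=\langle h_1,\dots,h_p\rangle$. Indeed $\mu_X(f)$ corresponds to $N=0$, $\tau_X(f)$ to $N=\langle f\rangle$, $\mu_X^-(f)$ to $N=I$, and $\tau_X^-(f)$ to $N=\langle f\rangle+I$. Since $X=h^{-1}(0)$ and $h$ is an \icis\ with $n-p\geq 1$, the ideal $I$ is radical and $I(X)=I$, so by Remark \ref{introdThetaX}\,\ref{adeintrodThetaX} we have $\Theta_X=\Theta_{h,I}$ and hence $J_X(f)=J_{h,I}(f)=J_{h,I^{\oplus p}}(f)$. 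Thus every one of the four ideals is exactly of the form $J_{h,M}(f)+N$ with $M=I^{\oplus p}$, to which Theorem \ref{claudetot} applies directly.

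The plan is to invoke Theorem \ref{claudetot} with $q=1$, $M=I^{\oplus p}$, and the four choices of $N$ above. First I would establish the finiteness statement: by Theorem \ref{claudetot} the module $D(f,h)+(N\oplus M)$ has finite colength if and only if both $J_{h,M}(f)+N$ and $D(h)+M$ have finite colength. Taking $N=0$, one has $D(h)+I^{\oplus p}=T(h)$ (since the extra generators $h_j\mathbf{e}_i$ of $T(h)$ generate exactly $I^{\oplus p}$), and $\t_I(h)=\tau(h)$, which is finite because $h$ is an \icis. Hence $D(f,h)+(0\oplus I^{\oplus p})$ has finite colength if and only if $J_X(f)=J_{h,I}(f)$ does, i.e.\ if and only if $\mu_X(f)<\infty$; this proves the first assertion. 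Moreover, finiteness of $\mu_X(f)$ forces finiteness of each of the other three colengths, since each ideal $J_X(f)+N$ contains $J_X(f)$.

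Next, for each choice of $N$ I would apply relation (\ref{ses2}), which reads
\begin{equation*}
\dim_\C\frac{\O_n}{J_{h,I^{\oplus p}}(f)+N}=\dim_\C\frac{\O_n^{p+1}}{D(f,h)+(N\oplus I^{\oplus p})}-\dim_\C\frac{\O_n^p}{D(h)+I^{\oplus p}}.
\end{equation*}
Since the subtracted term equals $\t_I(h)=\tau(h)$ in every case, substituting $N=0$, $\langle f\rangle$, $I$, $\langle f\rangle+I$ yields precisely (\ref{formuXf0}), (\ref{tauXficis}), (\ref{nuXficis}) and (\ref{relnuXficis}) respectively, after identifying $N\oplus I^{\oplus p}$ with the stated submodule of $\O_n^{p+1}$ (for instance $I\oplus I^{\oplus p}=I^{\oplus(p+1)}$, giving (\ref{nuXficis})).

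Finally, the two difference relations (\ref{nmt1}) and (\ref{nmt2}) follow by subtracting pairs of the above formulas, so that the $-\tau(h)$ terms cancel. For (\ref{nmt1}) I subtract (\ref{formuXf0}) from (\ref{tauXficis}): since $D(f,h)+(0\oplus I^{\oplus p})\subseteq D(f,h)+(\langle f\rangle\oplus I^{\oplus p})$, the difference of their colengths in $\O_n^{p+1}$ equals the length of the quotient of the larger module by the smaller, which is the right-hand side of (\ref{nmt1}); (\ref{nmt2}) is obtained identically from (\ref{nuXficis}) and (\ref{relnuXficis}). I expect no serious obstacle here, as the argument is a direct specialization of Theorem \ref{claudetot}; the only point requiring a little care is the bookkeeping identification $D(h)+I^{\oplus p}=T(h)$ and the resulting equality $\t_I(h)=\tau(h)$, together with verifying that each target submodule of $\O_n^{p+1}$ is correctly written as $D(f,h)+(N\oplus I^{\oplus p})$ for the appropriate $N$.
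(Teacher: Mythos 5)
Your proposal is correct and follows essentially the same route as the paper's proof: observe that $I$ is radical so $J_X(f)=J_{h,I}(f)$, then apply the exact sequence of Theorem \ref{claudetot} with $M=I^{\oplus p}$ and $N=0$, $\langle f\rangle$, $I$, $\langle f\rangle+I$, using $\t_I(h)=\tau(h)$. The extra bookkeeping you supply (the identification $D(h)+I^{\oplus p}=T(h)$ and the finiteness of the other three colengths from $J_X(f)\subseteq J_X(f)+N$) is exactly what the paper leaves implicit.
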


\begin{proof}
Let us remark that, since $h$ is an \textsc{icis} of positive dimension, the ideal $I$ is radical (see for
instance \cite[p.\,7]{Looijenga}). Therefore $J_X(f)=J_{h, I}(f)$, for any $f\in\O_n$.

Let us fix a function $f\in\O_n$. The exactness of (\ref{ses}) in the case
$M=I^{\oplus p}$ and $N=\{0\}$, shows that the finiteness of $\mu_X(f)$ is equivalent to the condition that the submodule
$D(f,h)+(0\oplus I^{\oplus p})$ has finite colength in $\O_n^{p+1}$.

Let us suppose that $\mu_X(f)<\infty$. Relation (\ref{formuXf0}) follows as a direct application of (\ref{ses2}) by taking
$M=I^{\oplus p}$ and $N=\{0\}$. Analogously, relations (\ref{tauXficis}), (\ref{nuXficis}) and (\ref{relnuXficis})
follow from (\ref{ses2}) by taking $M=I^{\oplus p}$ in the three cases and $N=\langle f\rangle$, $N=I$ and
$N=\langle f\rangle+I$, respectively.
Relations (\ref{nmt1}) and (\ref{nmt2}) are direct applications of (\ref{formuXf0})--(\ref{relnuXficis}).
\end{proof}

\begin{ex}
Let us consider the map $h:(\C^4,0)\to (\C^3,0)$ given by $h(x,y,z,t)=(x^2-y^2, xy+zt^2, x^3+y^2+z^3+t^2)$.
Let $X=h^{-1}(0)$ and let $I$ denote the ideal of $\O_3$ generated by the component functions of $h$.
We have that $h$ is an \textsc{icis}, $\dim(X)=1$, $\mu(h)=37$ and $\tau(h)=35$.
Let $f\in \O_4$ be given by $f(x,y,z,t)=x^2+y^2+z^2+t^2$. The module $D(f,h)+(0\oplus I^{\oplus 3})$ has finite colength and hence,
by Corollary \ref{segonaexp} we conclude that $\mu_X(f)=20$, $\tau_X(f)=16$, $\mu_X^-(f)=17$ and $\tau_X^-(f)=13$.
Let us remark that by using Singular \cite{Singular} and Lemma \ref{syzyg}, it is possible to find out that in this case $\Theta_X$ is an
intricate submodule of $\O_4^4$.
\end{ex}

\begin{rem}\label{sobresegonaexp}
\begin{enumerate}[label=(\alph*),wide]
\item As we will see in Section \ref{logideals}, the respective first summands appearing in (\ref{formuXf0})
and (\ref{nuXficis}) can be expressed in terms of other numerical invariants attached to the pair $(f,h)$. In order to
prove this fact, we develop in Section \ref{RSQM} a series of algebraic tools.

\item\label{mesgeneralencara} Let us remark that, as an application of (\ref{ses2}), relations (\ref{tauXficis}), (\ref{nuXficis}) and (\ref{relnuXficis}) also hold by assuming only the finiteness of $\tau_X(f)$, $\mu_X^-(f)$ and $\tau_X^-(f)$, respectively, instead of assuming that $\mu_X(f)<\infty$.
\end{enumerate}
\end{rem}

In the following result, which is similar to \cite[Proposition 2.1]{BKR1}, we derive some direct consequences of Proposition \ref{prop-1}\,(\ref{es2KM}).
%We refer to \cite{LNOT5} for results about $\mu^-_X(f)$ when $X$ is an {\icis}.

\begin{cor}\label{applK}
Let $X$ be an analytic subvariety of $(\C^n,0)$ and let $I=I(X)$.
Let $f\in \O_n$ and let $Y=f^{-1}(0)$. Let us suppose that $\mu_X(f)$ is finite.
Then
\begin{align}
\mu_{X}(f)&%=\mu(f)+\dim_\C\frac{J(f)}{J_X(f)}
=\mu(f)+\dim_\C\frac{\O_n^n}{\Theta_X+H_f}  \label{muG}\\
\tau_X(f)&%=\tau(f)+\dim_\C\frac{J(f)+\langle f\rangle}{J_X(f)+\langle f\rangle}
=\tau(f)+\dim_\C\frac{\O_n^n}{\Theta_X+\Theta_Y}.\label{tauG}\\
\mu_X^-(f)&%=\dim_\C\frac{\O_n}{J(f)+I(X)}+\dim_\C\frac{J(f)+I(X)}{J_X(f)+I(X)}
=\dim_\C\frac{\O_n}{J(f)+I}+ \dim_\C\frac{\O_n^n}{\Theta_X+\Theta_{f, I}}\label{nubarG}\\
\tau_X^-(f)&=\dim_\C\frac{\O_n}{J(f)+\langle f\rangle+I}+ \dim_\C\frac{\O_n^n}{\Theta_X+\Theta_{f, I+\langle f\rangle}}.\label{taubarG}\\
\mu_X(f)-\tau_X(f)&=\mu(f)-\tau(f)+\dim_\C\frac{\Theta_Y}{H_f+(\Theta_X\cap \Theta_Y)}.\label{numinustau2}\\
\mu_X^-(f)-\tau_X^-(f)&=\dim_\C\frac{\O_n}{J(f)+I}-\dim_\C\frac{\O_n}{J(f)+\langle f\rangle+I}\label{numinustau22}\\
&\hspace{0.5cm}+
\dim_\C\frac{\Theta_{f, I+\langle f\rangle}}{\Theta_{f, I}+(\Theta_X\cap \Theta_{f, I+\langle f\rangle})}.\nonumber
\end{align}
\end{cor}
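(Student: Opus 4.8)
The plan is to read off all four identities (\ref{muG})--(\ref{taubarG}) as instances of a single formula, namely Proposition \ref{prop-1}\,(\ref{es2KM}), and then to deduce the two difference formulas (\ref{numinustau2}) and (\ref{numinustau22}) by subtracting the appropriate pairs and rewriting a difference of colengths as the colength of a quotient. Write $I=I(X)$ and fix a map germ $h$ whose components generate $I$, so that $\Theta_X=\Theta_{h,I}$ and $J_X(f)=J_{h,I}(f)$ in the notation of Definition \ref{ThetaC} and Remark \ref{introdThetaX}. Throughout I take $q=1$ and $M=I^{\oplus p}$ in (\ref{es2KM}), whence $\Theta_{h,M}=\Theta_X$, and let $N$ range over the four ideals $0$, $\langle f\rangle$, $I$ and $I+\langle f\rangle$.

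First I would establish (\ref{muG})--(\ref{taubarG}). For each of these four choices of $N$, the module $J_X(f)+N$ contains $J_X(f)$, whose colength $\mu_X(f)$ is finite by hypothesis, so $J_X(f)+N$ has finite colength and Proposition \ref{prop-1}\,(\ref{es2KM}) applies, giving $\dim_\C \O_n/(J_X(f)+N)=\t_N(f)+\dim_\C \O_n^n/(\Theta_X+\Theta_{f,N})$ with all terms finite. It then remains only to identify the pieces: $\Theta_{f,0}=H_f$ and $\Theta_{f,\langle f\rangle}=\Theta_Y$ by the definition of the logarithmic module attached to the hypersurface $Y=f^{-1}(0)$, while from (\ref{tauM}) one has $\t_0(f)=\mu(f)$, $\t_{\langle f\rangle}(f)=\tau(f)$, $\t_I(f)=\dim_\C \O_n/(J(f)+I)$ and $\t_{I+\langle f\rangle}(f)=\dim_\C \O_n/(J(f)+\langle f\rangle+I)$. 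Substituting $N=0,\langle f\rangle, I, I+\langle f\rangle$ yields (\ref{muG}), (\ref{tauG}), (\ref{nubarG}) and (\ref{taubarG}) respectively.

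For the difference formulas, note that (\ref{es2KM}) already guarantees that every quotient occurring in (\ref{muG})--(\ref{taubarG}) has finite length. To obtain (\ref{numinustau2}) I subtract (\ref{tauG}) from (\ref{muG}); since $H_f\subseteq\Theta_Y$ we have $\Theta_X+H_f\subseteq\Theta_X+\Theta_Y$, both of finite colength, so the difference of their colengths equals $\dim_\C(\Theta_X+\Theta_Y)/(\Theta_X+H_f)$. The second isomorphism theorem identifies this with $\dim_\C\Theta_Y/(\Theta_Y\cap(\Theta_X+H_f))$, and the modular (Dedekind) law, applicable precisely because $H_f\subseteq\Theta_Y$, gives $\Theta_Y\cap(\Theta_X+H_f)=(\Theta_Y\cap\Theta_X)+H_f$, producing the term $\dim_\C\Theta_Y/(H_f+(\Theta_X\cap\Theta_Y))$ of (\ref{numinustau2}). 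Formula (\ref{numinustau22}) is entirely parallel: subtracting (\ref{taubarG}) from (\ref{nubarG}) and using $\Theta_{f,I}\subseteq\Theta_{f,I+\langle f\rangle}$, the same two steps turn the difference of colengths into $\dim_\C\Theta_{f,I+\langle f\rangle}/(\Theta_{f,I}+(\Theta_X\cap\Theta_{f,I+\langle f\rangle}))$.

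The computation is essentially formal once Proposition \ref{prop-1}\,(\ref{es2KM}) is in hand. The only points demanding care are the identifications of the module-theoretic terms (in particular $\Theta_{f,\langle f\rangle}=\Theta_Y$) and the correct chaining of the second isomorphism theorem with the modular law in the two difference formulas, whose hypotheses are exactly the containments $H_f\subseteq\Theta_Y$ and $\Theta_{f,I}\subseteq\Theta_{f,I+\langle f\rangle}$; the finiteness bookkeeping needed to legitimize each subtraction is already supplied by (\ref{es2KM}). I anticipate no genuine obstacle beyond this.
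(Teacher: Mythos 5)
Your proposal is correct and follows essentially the same route as the paper: the four identities are read off from Proposition \ref{prop-1}\,(\ref{es2KM}) with $M=I^{\oplus p}$ and $N=0,\langle f\rangle, I, I+\langle f\rangle$, and the two difference formulas are obtained exactly as in (\ref{numinustau2K}) (difference of colengths of nested finite-colength modules, second isomorphism theorem, modular law). The identifications you flag ($\Theta_{f,0}=H_f$, $\Theta_{f,\langle f\rangle}=\Theta_Y$ via reducedness of $f$, and the values of $\t_N(f)$) are the same ones the paper uses implicitly.
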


\begin{proof}
Relations (\ref{muG}), (\ref{tauG}), (\ref{nubarG}) and (\ref{taubarG}) arise as an application of equality (\ref{es2KM})
by taking the module $M$ as $I^{\oplus p}$ and the ideal $N$ as $\{0\}$, $\langle f\rangle$, $I$ or $\langle f\rangle+I$, respectively. Equality (\ref{numinustau2}) is a particular case of (\ref{numinustau2K}); this also follows by a direct argument by computing the difference of (\ref{muG}) and (\ref{tauG}). The proof of (\ref{numinustau22}) is analogous.
\end{proof}

Under the conditions of the above result, relation (\ref{numinustau2}) shows that $\mu_X(f)=\tau_X(f)$ if and only if
$\mu(f)=\tau(f)$ and $\Theta_Y=H_f+(\Theta_X\cap \Theta_Y)$, as already observed in  \cite{BKR1}. Let us remark that the condition
$\Theta_Y=H_f+(\Theta_X\cap \Theta_Y)$ links very significantly the modules $\Theta_X$ and $\Theta_Y$.

We end this section with a result concerning the computation of $\mu_X(f)$, $\tau_X(f)$, $\mu^-_X(f)$ and $\tau^-_X(f)$
when $X$ is a union of hypersurfaces. We first attach to any given analytic map another submodule
that will play an important role in this task.

 Given a map $h:(\C^n,0)\to (\C^p,0)$, let us define the following matrix $T^0(h)$:
\begin{equation}\label{matrixTg0}
T^0(h)=\left[
  \begin{array}{ccccccccccccc}
\frac{\partial h_1}{\partial x_1}&\cdots & \frac{\partial h_1}{\partial x_n}& h_1 & \cdots & 0 \\
%\frac{\partial g_2}{\partial x_1}&\cdots & \frac{\partial g_2}{\partial x_n}& 0 & \cdots & 0 &  \\
\vdots &  & \vdots & \vdots & \ddots  & \vdots                                      \\
\frac{\partial h_p}{\partial x_1}&\cdots & \frac{\partial h_p}{\partial x_n}& 0 & \cdots & h_p
  \end{array}
\right].
\end{equation}
As usual, if there is no risk of confusion, we also denote by $T^0(h)$ the submodule of $\O_n^p$ generated by the columns of the above matrix.
We denote by $\tau^0(h)$ the colength of $T^0(h)$ in $\O_n^p$, when this colength is finite.
Let us remark that if $h$ is an \icis, then $\tau^0(h)\geq \tau(h)$, since $T^0(h)\subseteq T(h)$ (see (\ref{matrixTh})).
Obviously equality holds when $p=1$.

\begin{rem}
The finiteness of the colength of the module $T^0(h)\subseteq \O_n^p$ implies that each row of the matrix given in (\ref{matrixTg0})
must generate an ideal of finite colength in $\O_n$, which is to say that $h_i$ has isolated singularity at the origin, for all $i=1,\dots, p$.
We refer to \cite[Theorem 1.1]{BKS} for the equivalence stating that the ideal $\I_p(T^0(h))$ has finite colength in $\O_n$ if and only
if $\mu(h_i)<\infty$, for all $i=1,\dots, p$, and the varieties $h_1^{-1}(0),\dots, h_p^{-1}(0)$ are in general position away from the origin.
\end{rem}

\begin{cor}\label{unionXi}
Let $h=(h_1,\dots, h_p):(\C^n,0)\to (\C^p,0)$ denote an analytic map such that $\tau^0(h)<\infty$
and let $X_i=h_i^{-1}(0)$, for all $i=1,\dots, p$.
Let us suppose that $h_1,\dots, h_p$ are pairwise relatively prime.
Let $X=V(h_1)\cup\cdots \cup V(h_p)$ and let $f\in\O_n$ such that $\mu_X(f)<\infty$. Then
\begin{align}
\mu_X(f)&=\dim_\C\frac{\O_n^{p+1}}{D(f,h)+(0\oplus\langle h_1\rangle\oplus \cdots \oplus\langle h_p\rangle          )}-\tau^0(h)     \label{muXficis2}\\
\tau_X(f)&=\dim_\C\frac{\O_n^{p+1}}{D(f,h)+(\langle f\rangle\oplus\langle h_1\rangle\oplus \cdots \oplus\langle h_p\rangle)}-\tau^0(h)
=\tau^0(f,h)-\tau^0(h).  \label{tauXficis2}\\
\mu_X^-(f)&=\dim_\C\frac{\O_n^{p+1}}{D(f,h)+(\langle h_1\cdots h_p\rangle\oplus\langle h_1\rangle\oplus \cdots \oplus\langle h_p\rangle)}-\tau^0(h).  \label{nuXficis2}\\
\tau_X^-(f)&=\dim_\C\frac{\O_n^{p+1}}{D(f,h)+(\langle h_1\cdots h_p, f\rangle\oplus\langle h_1\rangle\oplus \cdots \oplus\langle h_p\rangle)}-\tau^0(h).  \label{tauXbaricis}
\end{align}
\end{cor}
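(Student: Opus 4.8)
The plan is to reduce all four equalities to a single application of Theorem~\ref{claudetot}\,(\ref{ses2}) with $q=1$, once the module $\Theta_X$ has been identified with a module of the form $\Theta_{h,M}$. Set $M=\langle h_1\rangle\oplus\cdots\oplus\langle h_p\rangle\subseteq\O_n^p$. The first observation is purely notational: comparing with the matrix (\ref{matrixTg0}), the columns of $T^0(h)$ beyond the Jacobian block $D(h)$ are precisely the vectors $(h_1,0,\dots,0),\dots,(0,\dots,0,h_p)$, which generate $M$. Hence $T^0(h)=D(h)+M$, and in the notation of (\ref{tauM}) this reads $\tau^0(h)=\t_M(h)$. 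The same remark applied to the map $(f,h)$ gives $T^0(f,h)=D(f,h)+(\langle f\rangle\oplus M)$, so that $\tau^0(f,h)=\dim_\C\O_n^{p+1}/(D(f,h)+(\langle f\rangle\oplus M))$; this is what will produce the second equality in (\ref{tauXficis2}).

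The crucial step is the identity $\Theta_X=\Theta_{h,M}$, equivalently $J_X(f)=J_{h,M}(f)$. Since $\tau^0(h)<\infty$, each row of $T^0(h)$ generates an ideal of finite colength in $\O_n$, so each $h_i$ has an isolated singularity and is therefore reduced. As $h_1,\dots,h_p$ are pairwise relatively prime, the product $h_1\cdots h_p$ is reduced and cuts out $X$ as a set, whence $I(X)=\langle h_1\cdots h_p\rangle$ and $\Theta_X=\Theta_{\langle h_1\cdots h_p\rangle}$. The inclusion $\Theta_{h,M}\subseteq\Theta_X$ is immediate: if $\delta(h_i)=a_ih_i$ for each $i$, then $\delta(h_1\cdots h_p)=\sum_i\big(\prod_{j\neq i}h_j\big)\delta(h_i)=\big(\sum_i a_i\big)h_1\cdots h_p\in\langle h_1\cdots h_p\rangle$, and since $I(X)$ is principal this gives $\delta(I(X))\subseteq I(X)$. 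For the reverse inclusion, suppose $\delta(h_1\cdots h_p)\in\langle h_1\cdots h_p\rangle$ and fix an index $k$. Reducing the expansion $\delta(h_1\cdots h_p)=\sum_i\big(\prod_{j\neq i}h_j\big)\delta(h_i)$ modulo $h_k$ annihilates every term with $i\neq k$ (each contains the factor $h_k$) as well as the left-hand side, leaving $\big(\prod_{j\neq k}h_j\big)\delta(h_k)\in\langle h_k\rangle$. Because $\O_n$ is a UFD, $h_k$ is reduced, and $\prod_{j\neq k}h_j$ is coprime to $h_k$, every prime factor of $h_k$ must divide $\delta(h_k)$; hence $h_k\mid\delta(h_k)$, i.e. $\delta(h_k)\in\langle h_k\rangle$. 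As $k$ was arbitrary, $\delta\in\Theta_{h,M}$.

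With these identifications in place, the four formulas follow directly from (\ref{ses2}) with $q=1$ and $M$ as above. The hypotheses of Theorem~\ref{claudetot} hold: $D(h)+M=T^0(h)$ has finite colength by assumption, and $J_{h,M}(f)+N\supseteq J_X(f)$ has finite colength since $\mu_X(f)<\infty$, so $D(f,h)+(N\oplus M)$ has finite colength for each choice of $N$ below. Taking $N=\{0\}$ gives $\mu_X(f)=\dim_\C\O_n/J_X(f)=\dim_\C\O_n^{p+1}/(D(f,h)+(0\oplus M))-\tau^0(h)$, which is (\ref{muXficis2}). Taking $N=\langle f\rangle$ yields (\ref{tauXficis2}), the first summand being $\tau^0(f,h)$ by the identification $T^0(f,h)=D(f,h)+(\langle f\rangle\oplus M)$ recorded above. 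Since $I(X)=\langle h_1\cdots h_p\rangle$, taking $N=\langle h_1\cdots h_p\rangle$ computes $\mu_X^-(f)=\dim_\C\O_n/(J_X(f)+I(X))$ as in (\ref{nuXficis2}), and $N=\langle h_1\cdots h_p,f\rangle=\langle f\rangle+I(X)$ computes $\tau_X^-(f)$ as in (\ref{tauXbaricis}).

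I expect the reverse inclusion $\Theta_X\subseteq\Theta_{h,M}$ to be the only genuine obstacle: it is where the hypotheses (pairwise coprimality, together with reducedness coming from $\tau^0(h)<\infty$) are actually used, and where one must pass from the single condition $\delta(h_1\cdots h_p)\in\langle h_1\cdots h_p\rangle$ to the $p$ separate conditions $\delta(h_i)\in\langle h_i\rangle$. The divisibility argument above isolates each factor by reducing modulo $h_k$ and invoking the UFD structure of $\O_n$; once this is settled, the manipulations with (\ref{ses2}) are entirely formal.
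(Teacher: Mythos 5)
Your proposal is correct and follows essentially the same route as the paper: identify $\Theta_X$ with $\Theta_{h,M}$ for $M=\langle h_1\rangle\oplus\cdots\oplus\langle h_p\rangle$ (equivalently, $\Theta_X=\Theta_{X_1}\cap\cdots\cap\Theta_{X_p}$) and then apply Theorem~\ref{claudetot}\,(\ref{ses2}) with the four choices of $N$. The paper merely asserts this identification as a consequence of pairwise coprimality, whereas you supply the reduction-modulo-$h_k$ divisibility argument (using reducedness of each $h_i$ from $\tau^0(h)<\infty$ and the UFD structure of $\O_n$), which is a welcome filling-in of the step the paper leaves implicit.
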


\begin{proof}
Let $X_i= h_i^{-1}(0)$, for all $i=1,\dots, p$. The condition that $h_1,\dots, h_p$ are pairwise relatively prime imply that
$\Theta_X=\Theta_{X_1}\cap \cdots \cap \Theta_{X_p}$. In particular, $\Theta_X=\Theta_{h,M}$,
where $M=\langle h_1\rangle \oplus \cdots \oplus \langle h_p\rangle$. Hence the result arises as a direct application
of relation (\ref{ses2}).
\end{proof}

Let us fix an analytic function germ $f:(\C^n,0)\to (\C,0)$ with isolated singularity at the origin.
In view of the preceding result, we can consider the sequences of analytic invariants of $f$ given by
$\mu_{X^{(r)}}(f)$, $\tau_{X^{(r)}}(f)$, $\mu^-_{X^{(r)}}(f)$ and $\tau^-_{X^{(r)}}(f)$, where $X^{(r)}$ denotes the
union of $r$ generic hyperplanes in $\C^n$ passing through the origin.
Thus $X^{(r)}=h_1^{-1}(0)\cup\cdots\cup h_r^{-1}(0)$, where $(h_1,\dots, h_r):\C^n\to \C^r$ is a generic linear map.
Let us remark that, since $h_1,\dots, h_r$ are generic linear forms of $\C[x_1,\dots, x_n]$, we have
$\tau^0(h_1,\dots, h_r)=0$ provided that $r\leq n$.

We refer to \cite{Holm,RoseTerao,Wiens,Yuz} for references whose main objective is the effective computation of a minimal generating
system of $\Theta_X$ when $X$ is an arrangemenent of subspaces of $\C^n$. We also refer to \cite{OT,SS} for further references
about the study of several algebraic properties of $\Theta_X$ and its interplay with geometrical and combinatorial problems in this case.

\begin{ex}
Let $f,g\in\O_3$ be the function germs given by $f(x,y,z)=xyz+x^4+y^4+z^4$ and
$g(x,y,z)=xy^2z+x^5+y^4+z^5$, respectively. By applying Corollary \ref{unionXi}, the following values for
$\mu_{X^{(r)}}(f)$, $\tau_{X^{(r)}}(f)$, $\mu^-_{X^{(r)}}(f)$ and $\tau^-_{X^{(r)}}(f)$ are obtained when $1\leq r\leq 5$:
\vspace{0.1cm}

\centerline{
\begin{tabular}{l*{6}{c}r}
$r$              & $\mu_{X^{(r)}}(f)$ & $\tau_{X^{(r)}}(f)$ & $\mu^-_{X^{(r)}}(f)$ & $\tau^-_{X^{(r)}}(f)$ \\
\hline
1 & 15 & 13 & 4 & 4  \\
2 & 21 & 16 & 10 & 10 \\
3 & 30 & 22 & 19 & 18 \\
4 & 42 & 31 & 31 & 27\\
5 & 57 & 40 & 46 & 37 \\
%6 & 85 & 59 & 74   \\
%7 & 116 & 80 & 105   \\
\end{tabular}
\hspace{0.7cm}
\begin{tabular}{l*{6}{c}r}
$r$              & $\mu_{X^{(r)}}(g)$ & $\tau_{X^{(r)}}(g)$ & $\mu^-_{X^{(r)}}(g)$ & $\tau^-_{X^{(r)}}(g)$ \\
\hline
1 & 48 & 40 & 10 & 9  \\
2 & 61 & 47 & 23 & 20  \\
3 & 78 & 58 & 40 & 34  \\
4 & 99 & 70 & 61 & 50 \\
5 & 124 & 83 & 86& 67  \\
%6 & 85 & 59 & 74   \\
%7 & 116 & 80 & 105   \\
\end{tabular}
}
\end{ex}

%%%%%%%%%%%%%%%%%%%%%%%%%%%%%%%%%%%%%%%%%%%%%%%%%%%%%%%%%%%%%%%%%%%%%%%%%%%%%%%%%%%%%%%%%%%%%%%%
\section{Regular sequences of quotient modules and $I$-consistency of maps}\label{RSQM}

The main results of this section are Propositions \ref{propTor}, \ref{consistent} and  \ref{lesTheta}, and Corollary \ref{expgen1}.
The first is a result of commutative algebra that will imply a characterization, in Corollary \ref{IM}, of regular sequences of quotient modules.
Let $I$ be an ideal of $\O_n$. In Corollary \ref{expgen1} we show an expression for the colength of an ideal of the form $J_{h, I}(f)+N$, where $N\subseteq I^{\oplus q}$ and $h:(\C^n,0)\to (\C^p,0)$ and $f:(\C^n,0)\to (\C^q,0)$ are analytic map germs.
Proposition \ref{consistent} has motivated us to introduce in Definition \ref{defI-consistency} the condition of $I$-consistency of maps. This condition will play a fundamental
role in Sections \ref{logideals} and \ref{ThetaXT}.

Let $R$ be a ring, let $\lambda_1,\dots, \lambda_d\in R$ and let $M$ be an $R$-module.
We recall that $\lambda_1,\dots, \lambda_d$ is called a {\it regular sequence with respect to $M$} when
$M\neq \langle \lambda_1,\dots, \lambda_d\rangle M$, $\lambda_1$ is a non-zerodivisor of $M$ and
$\lambda_i$ is a non-zerodivisor of $M/\langle \lambda_1,\dots, \lambda_{i-1}\rangle M$, for all $i=2,\dots, d$
(see for instance \cite{BrunsHerzog, Eisenbud, Matsumura}). When $M=R$ then we will also say that $\lambda_1,\dots, \lambda_d$
is a {\it regular sequence of $R$}.

If $I$ is an ideal of $R$ and $M$ is a submodule of the free module
$R^q$, for some $q\geq 1$, then it is obvious that $IM\subseteq M\cap I^{\oplus q}$.
As a consequence of Corollary \ref{IM} we will see that, if
$I=\langle \lambda_1,\dots, \lambda_d\rangle$, being $\lambda_1,\dots, \lambda_d$ a regular sequence with respect to $R$ and with respect to the quotient module $R^q/M$, then
$IM= M\cap I^{\oplus q}$. That is, in this case $M\cap I^{\oplus q}$ reaches the smallest possible size.

We remark that the following result is already known in the case $q=1$ (see for instance \cite[p.\,646]{Eisenbud} or \cite[p.\,380]{GP}).

\begin{prop}\label{propTor}
Let $R$ be a Noetherian local ring and let $M$ be an $R$-submodule of $R^q$. Let $I$ be an ideal of $R$. Then
$$
\Tor_1^R\left( \frac{R^q}{M},\frac{R}{I} \right)=\frac{M\cap I^{\oplus q}}{IM}.
$$
\end{prop}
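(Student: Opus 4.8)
The plan is to read off the desired identity from the long exact sequence of $\Tor$ attached to the tautological short exact sequence
\[
0 \longrightarrow M \longrightarrow R^q \longrightarrow \frac{R^q}{M} \longrightarrow 0.
\]
Applying $-\otimes_R R/I$ and using that $R^q$ is free, hence flat, so that $\Tor_1^R(R^q,R/I)=0$, the long exact sequence collapses to the left-exact piece
\[
0 \longrightarrow \Tor_1^R\!\left(\frac{R^q}{M},\frac{R}{I}\right) \longrightarrow M\otimes_R \frac{R}{I} \stackrel{\iota}{\longrightarrow} R^q\otimes_R \frac{R}{I},
\]
where $\iota$ is the map induced by the inclusion $M\hookrightarrow R^q$. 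Consequently $\Tor_1^R(R^q/M,R/I)$ is canonically isomorphic to $\ker\iota$, and the whole task reduces to identifying this kernel explicitly.

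For this I would invoke the standard natural isomorphisms $M\otimes_R R/I \cong M/IM$ and $R^q\otimes_R R/I \cong R^q/IR^q = R^q/I^{\oplus q}$, the last equality holding because $IR^q = I^{\oplus q}$. Under these identifications $\iota$ becomes the morphism $M/IM \to R^q/I^{\oplus q}$ sending a class $m+IM$ to $m+I^{\oplus q}$, that is, the map induced on quotients by the inclusion $M\hookrightarrow R^q$. Its kernel is precisely the set of classes $m+IM$ with $m\in M$ and $m\in I^{\oplus q}$, which is $(M\cap I^{\oplus q})/IM$. Combining this with the previous paragraph yields the claimed equality.

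The argument is essentially formal, so I do not anticipate a genuine obstacle; the only point requiring a little care is the commutativity of the square identifying $\iota$ with the induced quotient map, i.e.\ checking that the isomorphisms $M\otimes_R R/I\cong M/IM$ and $R^q\otimes_R R/I\cong R^q/I^{\oplus q}$ are compatible with the morphism induced by $M\hookrightarrow R^q$. This follows from the functoriality of $-\otimes_R R/I$ together with the naturality in $N$ of the isomorphism $N\otimes_R R/I\cong N/IN$. The known case $q=1$ then appears simply as the specialization of this computation.
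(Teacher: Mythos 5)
Your argument is correct and is essentially identical to the paper's proof: both tensor the tautological sequence $0\to M\to R^q\to R^q/M\to 0$ with $R/I$, use freeness of $R^q$ to kill $\Tor_1^R(R^q,R/I)$, and identify $\Tor_1^R(R^q/M,R/I)$ with the kernel of the induced map $M/IM\to R^q/I^{\oplus q}$, which is $(M\cap I^{\oplus q})/IM$. No discrepancies to report.
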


\begin{proof}
Let us consider the exact sequence
\begin{equation}\label{Mexactw}
\xymatrix@C=0.2cm@R=1.5ex{
0 \ar[rr] &&  M \ar[rr]^-{\pi_1} && R^q \ar[rr]^-{\pi_2}   &&
\displaystyle\frac{R^q}{M}\ar[rr] && 0}
\end{equation}
where $\pi_1$ and $\pi_2$ denote the inclusion and the projection morphisms.
The long Tor-sequence associated to the above sequence (see \cite[\S 6.2]{Eisenbud} or \cite[Proposition 7.1.2]{GP}) is written as follows:
\begin{align}
&\xymatrix@C=0.2cm@R=1.5ex{
\cdots \ar[rr] && \Tor_2^R(\frac{R^q}{M},\frac RI) \ar[rr] &&
\Tor_1^R(M,\frac RI) \ar[rr] &&
\Tor_1^R(R^q,\frac RI) \ar[rr]  &&
\Tor_1^R\(\frac{R^q}{M},\frac RI\) \ar[rr] && \,
}\nonumber\\
&\xymatrix@C=0.2cm@R=1.5ex{
\hspace{3.6cm}
\ar[rr] &&
\Tor_0^R(M,\frac RI) \ar[rr]^-{\pi_1'} &&
\Tor_0^R(R^q,\frac RI) \ar[rr]^-{\pi_2'}  &&
\Tor_0^R\(\frac{R^q}{M},\frac RI\) \ar[rr] && 0
}\label{Tor2}
\end{align}
where the maps $\pi_1'$ and $\pi_2'$ of (\ref{Tor2}) are the canonical morphisms
induced from (\ref{Mexactw}) after tensoring with $\frac RI$.

For any pair of $R$-modules $M_1$ and $M_2$ we have $\Tor_0^R(M_1,M_2)=M_1\otimes_R M_2$ (see for instance \cite[p.\,380]{GP}).
Therefore
\begin{align}
\Tor_0^R\(M,\smallfrac RI\)&=M\otimes_R\smallfrac RI\cong \smallfrac{M}{IM}  \label{MIM}\\
\Tor_0^R\(R^q,\smallfrac RI\)&=R^q\otimes_R\smallfrac RI\cong \smallfrac{R^q}{IR^q}=\smallfrac{R^q}{I^{\oplus q}}  \label{RqIq} \\
\Tor_0^R\(\smallfrac{R^q}{M},\smallfrac RI\)&=\smallfrac{R^q}{M}\otimes_R\smallfrac RI\cong
\smallfrac{R^q}{I^{\oplus q}+M}.  \nonumber
\end{align}
%(the definition of the connecting morphisms
%$\Tor_{i+1}^R(\frac{R^q}{M},\frac RI) \longrightarrow
%\Tor_i^R(M,\frac RI)$, for any $i\in\Z_{\geq 0}$ is explained in the proof of [Greuel-Pfister, Proposition 7.1.2]).
In particular, considering the identifications of (\ref{MIM}) and (\ref{RqIq}),
we conclude that the morphism $\pi_1': \frac{M}{IM} \to \frac{R^q}{I^{\oplus q}}$ is given by $m+IM\mapsto m+I^{\oplus q}$, for any $m\in M$.
Since $R^q$ is a free module, we have $\Tor^R_1(R^q, \frac RI)=0$ (see for instance \cite[p.\,161,\,\textsection 6.2]{Eisenbud}).
Therefore, the exactness of the long Tor-sequence associated to (\ref{Mexactw}) implies that
$$
\Tor_1^R\(\frac{R^q}{M},\frac RI\)\cong\ker(\pi_1')=\frac{M\cap I^{\oplus q}}{IM}
$$
and hence the result follows.
\end{proof}

\begin{cor}\label{IM}
Let $R$ be a Noetherian local ring and let $M$ be an $R$-submodule of $R^q$. Let $\lambda_1,\dots, \lambda_d$ be a regular sequence of $R$
and let $I$ be the ideal of $R$ generated by $\lambda_1,\dots, \lambda_d$. Then the following conditions are equivalent:
\begin{enumerate}[label=\textnormal{(\alph*)}]
\item $\lambda_1,\dots, \lambda_d$ is a regular sequence with respect to $R^q/M$.
\item $M\cap I^{\oplus q}=IM$.
\end{enumerate}
\end{cor}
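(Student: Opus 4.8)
The plan is to reduce the statement to the vanishing of a single Tor module, and then to prove the resulting homological characterisation of regular sequences by induction on $d$. First I would apply Proposition \ref{propTor} to this $M$ and $I$: it gives $\Tor_1^R(R^q/M, R/I) = (M\cap I^{\oplus q})/IM$, so condition (b), namely $M\cap I^{\oplus q}=IM$, is exactly the vanishing $\Tor_1^R(R^q/M, R/I)=0$. Writing $N=R^q/M$, which is finitely generated since $R$ is Noetherian, the corollary reduces to the claim that the $R$-regular sequence $\lambda_1,\dots,\lambda_d$ generating $I$ is $N$-regular if and only if $\Tor_1^R(N,R/I)=0$. Note that since $\lambda_1,\dots,\lambda_d$ is $R$-regular we have $I\neq R$, hence $I\subseteq\mathbf{m}$; so by Nakayama $IN\neq N$ whenever $N\neq 0$, and the nondegeneracy condition $N\neq IN$ in the definition of a regular sequence is automatic in the nontrivial case $M\subsetneq R^q$ (the case $M=R^q$, giving $N=0$, is degenerate).

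I would prove this equivalence by induction on $d$. For $d=1$ we have $I=\lambda_1 R$, and since $\lambda_1$ is a nonzerodivisor of $R$ the sequence $0\to R\xrightarrow{\ \lambda_1\ }R\to R/I\to 0$ is a free resolution of $R/I$; tensoring with $N$ identifies $\Tor_1^R(N,R/I)$ with $(0:_N\lambda_1)$, which vanishes precisely when $\lambda_1$ is a nonzerodivisor on $N$, i.e. when $\lambda_1$ is $N$-regular. For the inductive step, set $I'=\langle\lambda_1,\dots,\lambda_{d-1}\rangle$. Because $\lambda_1,\dots,\lambda_d$ is $R$-regular, $\lambda_d$ is a nonzerodivisor on $R/I'$, so there is a short exact sequence
\[
0 \longrightarrow R/I' \xrightarrow{\ \lambda_d\ } R/I' \longrightarrow R/I \longrightarrow 0 .
\]
Applying the long exact sequence of $\Tor_\bullet^R(N,-)$, and using that multiplication by $\lambda_d$ on $R/I'$ induces (by $R$-linearity of $\Tor_i^R(N,-)$) multiplication by $\lambda_d$ on each $\Tor_i^R(N,R/I')$, yields the exact strand
\[
\Tor_1^R(N,R/I') \xrightarrow{\ \lambda_d\ } \Tor_1^R(N,R/I') \longrightarrow \Tor_1^R(N,R/I) \longrightarrow N/I'N \xrightarrow{\ \lambda_d\ } N/I'N ,
\]
where $\Tor_0^R(N,R/I')=N\otimes_R R/I'=N/I'N$.

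The core of the argument is to read both directions off this strand. Suppose first $\Tor_1^R(N,R/I)=0$. Then the leftmost arrow, multiplication by $\lambda_d$ on $\Tor_1^R(N,R/I')$, is surjective; since $\Tor_1^R(N,R/I')$ is a finitely generated $R$-module and $\lambda_d\in\mathbf{m}$, Nakayama forces $\Tor_1^R(N,R/I')=0$. By the induction hypothesis $\lambda_1,\dots,\lambda_{d-1}$ is then $N$-regular, and exactness at $N/I'N$ gives $\ker(\lambda_d\colon N/I'N\to N/I'N)=0$, i.e. $\lambda_d$ is a nonzerodivisor on $N/I'N$; together these say exactly that $\lambda_1,\dots,\lambda_d$ is $N$-regular. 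Conversely, if $\lambda_1,\dots,\lambda_d$ is $N$-regular, then $\lambda_1,\dots,\lambda_{d-1}$ is $N$-regular, so by induction $\Tor_1^R(N,R/I')=0$; the strand then identifies $\Tor_1^R(N,R/I)$ with $(0:_{N/I'N}\lambda_d)$, which is zero because $\lambda_d$ is a nonzerodivisor on $N/I'N$. This closes the induction and, via the reduction above, proves the corollary.

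I expect the main obstacle to lie in the bookkeeping of the inductive step rather than in any single computation: specifically, justifying that the induced maps in the $\Tor$ long exact sequence are genuinely multiplication by $\lambda_d$ (functoriality of the additive functor $\Tor_i^R(N,-)$ applied to the scalar endomorphism of $R/I'$), and then deploying Nakayama on the finitely generated module $\Tor_1^R(N,R/I')$ to upgrade surjectivity of $\lambda_d$-multiplication to outright vanishing. Everything else is a clean two-term reading of exactness. The one point needing a side remark is the degenerate case $M=R^q$, where (b) holds vacuously while (a) fails under the paper's definition of a regular sequence, so the statement is to be read for $M\subsetneq R^q$.
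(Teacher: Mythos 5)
Your proposal is correct and follows the same overall route as the paper: both arguments reduce condition (b) to the vanishing of $\Tor_1^R(R^q/M,R/I)$ via Proposition \ref{propTor} and then invoke the homological characterisation of regular sequences. The only difference is that the paper obtains that characterisation by citing the Koszul resolution of $R/I$ together with standard references, whereas you prove it from scratch by induction on $d$ using the long exact Tor sequence and Nakayama's lemma --- a correct, self-contained substitute --- and your side remark about the degenerate case $M=R^q$ is a legitimate caveat that the paper leaves implicit.
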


\begin{proof}
Since $\lambda_1,\dots, \lambda_d$ is a regular sequence with respect to $R$, the Koszul complex of $\lambda_1,\dots, \lambda_d$
constitutes a free resolution of $R/I$. By tensorizing this complex with $\frac{R^q}{M}$, we obtain that
condition (a) is equivalent to
$$
\Tor_1^R\left( \frac{R}{I},\frac{R^q}{M} \right)=0
$$
as can be seen, for instance, in \cite[Lemma\,\,21.5,\,p.\,162]{HIO} or \cite[Corollary 2,\,p.\,55]{Serre}.
Hence the result is an immediate consequence of Proposition \ref{propTor}.
\end{proof}

In general, given any analytic map germ $g:(\C^n,0)\to \C^s$ and an ideal $I\subseteq \O_n$,
the sum $I^{\oplus n}+H_g$ gives a substantial part of $\Theta_{g,I}$. The next result characterizes the equality $I^{\oplus n}+H_g=\Theta_{g,I}$.
In Section \ref{ThetaXT} we will develop a further study of the quotient module $\Theta_{g,I}/(I^{\oplus n}+H_g)$
(see Corollary \ref{ThThT}).

\begin{prop}\label{consistent}
Let $g:(\C^n,0)\to \C^s$ be an analytic map germ and let
$I$ be any ideal of $\O_n$. Then the following conditions are equivalent:
\begin{enumerate}[label=\textnormal{(\alph*)}]
\item\label{primera} $D(g)\cap I^{\oplus s}=ID(g)$.
\item\label{cons2} $\Theta_{g, I}=I^{\oplus n}+H_g$.
\end{enumerate}
Moreover, if we assume that $I$ is generated by a regular sequence $\lambda_1,\dots, \lambda_d$ of $\O_n$, we conclude that any of the above conditions is equivalent to:
\begin{enumerate}[label=\textnormal{(c)}]
\item \label{cons3} $\lambda_1,\dots, \lambda_d$ is a regular sequence with respect to $\O_n^{s}/D(g)$.
\end{enumerate}
\end{prop}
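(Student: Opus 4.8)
The plan is to route everything through the evaluation map
$$
dg\colon \O_n^n\longrightarrow \O_n^s,\qquad \delta\longmapsto \delta(g)=\textstyle\sum_{i=1}^n\delta_i\frac{\partial g}{\partial x_i},
$$
whose image is exactly $D(g)$ and whose kernel is $H_g=\syz\big(\frac{\partial g}{\partial x_1},\dots,\frac{\partial g}{\partial x_n}\big)$. All four modules in conditions (a) and (b) are controlled by this single map. By definition $\Theta_{g,I}=dg^{-1}(I^{\oplus s})$, so, because $\operatorname{im}(dg)=D(g)$, one has $dg(\Theta_{g,I})=D(g)\cap I^{\oplus s}$. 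On the other hand $dg(I^{\oplus n})=ID(g)$ and $dg(H_g)=0$, hence $dg(I^{\oplus n}+H_g)=ID(g)$. Thus the two sides of (b) have images $D(g)\cap I^{\oplus s}$ and $ID(g)$ respectively, and these are the two sides of (a).

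First I would record the elementary correspondence principle: if a submodule $A\subseteq\O_n^n$ contains $\ker(dg)=H_g$, then $A=dg^{-1}(dg(A))$. Both $\Theta_{g,I}$ and $I^{\oplus n}+H_g$ manifestly contain $H_g$, so each is the full preimage of its own image. Consequently
$$
\Theta_{g,I}=I^{\oplus n}+H_g \iff dg(\Theta_{g,I})=dg(I^{\oplus n}+H_g)\iff D(g)\cap I^{\oplus s}=ID(g),
$$
which is precisely the equivalence of (a) and (b), valid with no finiteness or regularity hypothesis. The only inclusion worth noting along the way is $ID(g)\subseteq D(g)\cap I^{\oplus s}$, which is automatic.

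For the ``moreover'' part I would simply invoke Corollary \ref{IM} with $R=\O_n$, $q=s$ and $M=D(g)$: since $\lambda_1,\dots,\lambda_d$ is a regular sequence of $\O_n$ generating $I$, that corollary states that $D(g)\cap I^{\oplus s}=ID(g)$ (our condition (a)) is equivalent to $\lambda_1,\dots,\lambda_d$ being a regular sequence with respect to $\O_n^s/D(g)$, which is condition (c). Chaining this with the first part yields $(a)\Leftrightarrow(b)\Leftrightarrow(c)$.

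I expect no serious obstacle: the substance lies entirely in correctly identifying $dg(\Theta_{g,I})=D(g)\cap I^{\oplus s}$ and $dg(I^{\oplus n}+H_g)=ID(g)$, after which the equivalence $(a)\Leftrightarrow(b)$ is pure preimage bookkeeping and $(a)\Leftrightarrow(c)$ is handed to us by Corollary \ref{IM}. The single point demanding care is that the assignment $A\mapsto dg(A)$ is a bijection only on submodules containing $\ker(dg)=H_g$, which is exactly why one checks that both $\Theta_{g,I}$ and $I^{\oplus n}+H_g$ contain $H_g$ before comparing their images.
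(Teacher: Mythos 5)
Your proof is correct and follows essentially the same route as the paper: the paper's argument is exactly the two element-chasing inclusions that your preimage-correspondence principle packages (lifting $\delta(g)=\gamma(g)$ to $\delta-\gamma\in H_g$ and conversely), and both proofs dispatch the equivalence with (c) by the same appeal to Corollary \ref{IM} applied to $M=D(g)$. Your formulation via $dg^{-1}$ and the observation that both submodules contain $\ker(dg)=H_g$ is a slightly tidier organization of the identical content.
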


\begin{proof} Let us suppose that condition \ref{primera} holds. Let $\delta\in \Theta_{g, I}$.
Therefore
$
\delta(g)\in D(g)\cap I^{\oplus s}=ID(g)
$.
Hence, there exists some $\gamma\in I^{\oplus n}$
for which $\delta(g)=\gamma(g)$, that is, $\delta-\gamma\in H_g$. In particular
$\delta\in I^{\oplus n}+H_g$, and hence the inclusion
$\Theta_{g, I}\subseteq I^{\oplus n}+H_g$ follows. Since the reverse inclusion is obvious,
we obtain condition \ref{cons2}.

Let us suppose now that condition \ref{cons2} holds. Any element of $D(g)\cap I^{\oplus s}$ is of the form
$\delta(g)$, for some $\delta\in\Theta_{g, I}$. So, let us fix a derivation
$\delta\in \Theta_{g,I}$. By hypothesis, there exist derivations
$\gamma\in I^{\oplus n}$ and $\rho\in H_g$ for which $\delta=\gamma+\rho$. Hence
$\delta(g)=\gamma(g)\in ID(g)$. That is, the inclusion
$D(g)\cap I^{\oplus s}\subseteq ID(g)$ follows. Since the reverse inclusion is obvious,
we obtain condition \ref{primera}.

Under the condition that $I$ is generated by a regular sequence $\lambda_1,\dots, \lambda_d$ of $\O_n$, the equivalence between
\ref{primera} and \ref{cons3} is an immediate application of Corollary \ref{IM}.
\end{proof}

As we will see in Section \ref{ThetaXT}, the submodules of the form $I^{\oplus n}+H_g$ will play a special role in general
(see Definition \ref{defdeThetaT} and Corollary \ref{ThThT}).
Proposition \ref{consistent} motivates us to introduce the following definition.

\begin{defn}\label{defI-consistency}
Let $g:(\C^n,0)\to \C^s$ be an analytic map germ and let
$I$ be any ideal of $\O_n$. We say that $g$ is {\it $I$-consistent} when $D(g)\cap I^{\oplus s}=ID(g)$.
\end{defn}

As can be seen in Theorem \ref{JXfIf}, the condition of $I$-consistency is satisfied in a wide class of examples.

\begin{prop}\label{lesTheta}
Let $h:(\C^n,0)\to (\C^p,0)$ and $f:(\C^n,0)\to (\C^q,0)$ be analytic map germs and let
$I$ be any ideal of $\O_n$. Let us suppose that $(f,h)$ is $I$-consistent. Then
\begin{equation}\label{eq1}
J_{h, I}(f)\cap I^{\oplus q}=ID(f)
\hspace{1cm}\textit{and}\hspace{1cm}
J_{f, I}(h)\cap I^{\oplus p}=ID(h).
\end{equation}
\end{prop}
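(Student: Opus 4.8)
The plan is to establish the first equality $J_{h,I}(f)\cap I^{\oplus q}=ID(f)$ by a double inclusion and then to deduce the second one by symmetry. A preliminary remark I would record first is that the set $\{\delta(f):\delta\in\Theta_{h,I}\}$ is already an $\O_n$-submodule of $\O_n^q$, since $\Theta_{h,I}$ is a submodule of $\O_n^n$ and the evaluation $\delta\mapsto\delta(f)$ is $\O_n$-linear; hence every element of $J_{h,I}(f)$ can be written as $\delta(f)$ for a single $\delta\in\Theta_{h,I}$, and such $\delta$ satisfies $\delta(h)\in I^{\oplus p}$ by definition.

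For the inclusion $ID(f)\subseteq J_{h,I}(f)\cap I^{\oplus q}$ I would argue as follows. First $I^{\oplus n}\subseteq\Theta_{h,I}$, because for $\gamma\in I^{\oplus n}$ every entry of $\gamma(h)=\sum_i\gamma_i\frac{\partial h}{\partial x_i}$ lies in $I$. Applying such $\gamma$ to $f$ shows $\gamma(f)\in J_{h,I}(f)$, while $\gamma(f)=\sum_i\gamma_i\frac{\partial f}{\partial x_i}\in ID(f)\subseteq I^{\oplus q}$; varying $\gamma$ over $I^{\oplus n}$ produces all of $ID(f)$, giving the inclusion.

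The reverse inclusion is where the $I$-consistency hypothesis enters, and it is the step I regard as the heart of the argument. Take $v\in J_{h,I}(f)\cap I^{\oplus q}$ and write $v=\delta(f)$ with $\delta\in\Theta_{h,I}$. The idea is to couple the two components: the vector $\delta(f,h)=(\delta(f),\delta(h))$ lies in $D(f,h)$, being an $\O_n$-combination of the columns $\frac{\partial(f,h)}{\partial x_i}$, and it lies in $I^{\oplus(q+p)}$ because $\delta(f)=v\in I^{\oplus q}$ and $\delta(h)\in I^{\oplus p}$. The hypothesis that $(f,h)$ is $I$-consistent, that is $D(f,h)\cap I^{\oplus(q+p)}=ID(f,h)$, then forces $\delta(f,h)\in ID(f,h)$. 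Since $ID(f,h)=\{\gamma(f,h):\gamma\in I^{\oplus n}\}$, I can write $\delta(f,h)=\gamma(f,h)$ with $\gamma\in I^{\oplus n}$ and read off the first $q$ coordinates to conclude $v=\delta(f)=\gamma(f)\in ID(f)$, completing the first equality.

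Finally, the second equality $J_{f,I}(h)\cap I^{\oplus p}=ID(h)$ follows by interchanging $f$ and $h$: this is legitimate because $I$-consistency of $(f,h)$ is symmetric in the two maps, the modules $D(f,h)$ and $D(h,f)$ being carried to one another by the coordinate permutation of $\O_n^{q+p}$, which preserves both $I^{\oplus(q+p)}$ and $I\,D(\cdot)$. I do not anticipate a genuine obstacle; the only delicate point in the whole argument is the coupling device of the third paragraph, which lets the consistency of the pair $(f,h)$ be transferred to a statement about the $f$-component alone.
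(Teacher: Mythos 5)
Your proof is correct and follows essentially the same route as the paper: the paper also reduces to the first equality, notes the easy inclusion via $I^{\oplus n}\subseteq\Theta_{h,I}$, and for the reverse inclusion decomposes the relevant derivation as $\gamma_1+\gamma_2$ with $\gamma_1\in I^{\oplus n}$ and $\gamma_2\in H_f\cap H_h$ by invoking Proposition \ref{consistent} for the pair $(f,h)$. Your coupling device in the third paragraph simply inlines the proof of that proposition, applying the definition $D(f,h)\cap I^{\oplus(q+p)}=ID(f,h)$ directly to $\delta(f,h)$, which is the same argument.
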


\begin{proof}
Let us prove the equality $J_{h, I}(f)\cap I^{\oplus q}=ID(f)$. The second equality of (\ref{eq1}) is a consequence of the first by replacing the roles of $f$ and $h$.
Any element of $ID(f)$ is of the form $\gamma(f)$, where $\gamma$ belongs to $I^{\oplus n}$. So the inclusion
$J_{h, I}(f)\cap I^{\oplus q}\supseteq ID(f)$ is obvious. Let us take an element $g\in J_{h, I}(f)\cap I^{\oplus q}$. Then, there exists a derivation
$\delta\in \Theta_{f, I}\cap \Theta_{h, I}$ for which $g=\delta(f)$. Since $(f,h)$ is $I$-consistent,
Proposition \ref{consistent} shows the existence of derivations $\gamma_1\in I^{\oplus n}$ and $\gamma_2\in H_f\cap H_h$ such that
$\delta=\gamma_1+\gamma_2$. In particular $g=\delta(f)=\gamma_1(f)\in ID(f)$. Hence the first equality of (\ref{eq1}) follows.
\end{proof}

In following result we characterize the condition of $I$-consistency of a given map $g:(\C^n,0)\to \C^s$ when $\O_n^{s}/D(g)$ is Cohen-Macaulay.

\begin{thm}\label{JXfIf}
Let $g=(g_1,\dots, g_s):(\C^n,0)\to \C^s$ be a analytic map germ, where $s\leq n$.
Let $d$ denote the dimension of the module $\O_n^{s}/D(g)$.
Let us suppose that $\O_n^{s}/D(g)$ is Cohen-Macaulay and $d\geq 1$.
Let $I$ be the ideal of $\O_n$ generated by a regular sequence $\lambda_1,\dots, \lambda_d$ of $\O_n$.
Then, the following conditions are equivalent:
\begin{enumerate}[label=\textnormal{(\alph*)}]
\item\label{item1} $g$ is $I$-consistent.
\item\label{item2} $\J(g)+I$ has finite colength.
\item\label{item3} $\lambda_1,\dots, \lambda_d$ is a regular sequence with respect to $\O_n^{s}/D(g)$.
\end{enumerate}
\end{thm}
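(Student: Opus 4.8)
The plan is to notice that the equivalence \ref{item1}$\Leftrightarrow$\ref{item3} is already furnished by Proposition \ref{consistent}. Indeed, by Definition \ref{defI-consistency} condition \ref{item1} reads $D(g)\cap I^{\oplus s}=ID(g)$, which is part \ref{primera} of Proposition \ref{consistent}, while \ref{item3} is exactly part \ref{cons3} of that proposition; since here $I$ is generated by the regular sequence $\lambda_1,\dots,\lambda_d$ of $\O_n$, the two are equivalent. Thus the only genuinely new content is the equivalence \ref{item2}$\Leftrightarrow$\ref{item3}, and this is where the Cohen--Macaulay hypothesis on $M:=\O_n^s/D(g)$ will be used.

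To prove \ref{item2}$\Leftrightarrow$\ref{item3}, I would first note that the elements $\lambda_1,\dots,\lambda_d$ lie in the maximal ideal $\m$ of $\O_n$, as they generate the proper ideal $I$. Since $M$ is Cohen--Macaulay of dimension $d$ and the number of the $\lambda_i$ equals $\dim M=d$, the standard characterization of maximal regular sequences on a Cohen--Macaulay module (see e.g. \cite[Theorem 2.1.2]{BrunsHerzog}) gives that $\lambda_1,\dots,\lambda_d$ is a regular sequence with respect to $M$ if and only if it is a system of parameters for $M$, i.e. if and only if $M/IM$ has finite length. Identifying $M/IM=\O_n^s/(D(g)+I^{\oplus s})$, condition \ref{item3} becomes equivalent to the finiteness of the colength of $D(g)+I^{\oplus s}$ in $\O_n^s$.

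It remains to match this finiteness with condition \ref{item2}, and for this I would compute the support of $M/IM$. The module $M$ is presented by the $s\times n$ matrix $D(g)$, so its zeroth Fitting ideal is the ideal of maximal minors $\operatorname{Fitt}_0(M)=\J(g)$; as $\sqrt{\operatorname{Ann}(M)}=\sqrt{\operatorname{Fitt}_0(M)}$, we get $\operatorname{Supp}(M)=V(\J(g))$. Writing $M/IM=M\otimes_{\O_n}\O_n/I$ and using $\operatorname{Supp}(M\otimes_{\O_n}\O_n/I)=\operatorname{Supp}(M)\cap V(I)$, we find $\operatorname{Supp}(M/IM)=V(\J(g)+I)$. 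Hence $M/IM$ has finite length exactly when $V(\J(g)+I)$ reduces to the origin, that is, when $\J(g)+I$ is $\m$-primary, which is precisely condition \ref{item2}. Combined with the equivalence \ref{item1}$\Leftrightarrow$\ref{item3} coming from Proposition \ref{consistent}, this closes the cycle among \ref{item1}, \ref{item2} and \ref{item3}.

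The step I expect to require most care is the Cohen--Macaulay input in the second paragraph: one must ensure the hypotheses of the cited theorem are met, in particular that the length of the sequence equals $\dim M$ and that the $\lambda_i$ belong to $\m$, so that \emph{regular sequence with respect to} $M$ and \emph{system of parameters of} $M$ genuinely coincide. By contrast, the Fitting-ideal and support computation of the last paragraph is routine once $\J(g)$ is recognized as the radical of the annihilator of $M$.
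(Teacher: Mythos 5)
Your proposal is correct and follows essentially the same route as the paper: the equivalence of (a) and (c) via Proposition \ref{consistent}, the Cohen--Macaulay characterization of regular sequences of length $d=\dim M$ as systems of parameters, and the identification $\dim M/IM=\dim \O_n/(\J(g)+I)$ (the paper cites \cite[Proposition 20.7]{Eisenbud} for the Fitting-ideal/annihilator comparison that you carry out explicitly via supports). No gaps.
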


\begin{proof}
First of all, let us observe that, since we assume that $\lambda_1,\dots, \lambda_d$ is a regular sequence of $\O_n$, Proposition \ref{consistent} shows the equivalence between items \ref{item1} and \ref{item3}.

Let $A$ denote the quotient module $\O_n^{s}/D(g)$. Let $\pi:\O^{s}_n\lto (\O_n/I)^{s}$ be the natural projection. We have the following isomorphisms:
\begin{equation}\label{N/IN}
\frac{A}{IA}\cong \frac{\O_n^{s}}{I^{\oplus s} +D(g)}\cong \frac{(\O_n/I)^{s}}{\pi\big(D(g)\big)}.
\end{equation}
Therefore
\begin{equation}\label{dimIN}
\dim \frac{A}{IA}= \dim \frac{(\O_n/I)^{s}}{\pi\big(D(g)\big)} =\dim\frac{\O_n}{\J(g)+I},%=0
\end{equation}
where the last equality is a consequence of \cite[Proposition 20.7]{Eisenbud}, because $\J(g)$ denotes the ideal generated
by the maximal minors of the differential matrix of $g$.

Since we assume that $A$ is Cohen-Macaulay of dimension $d$, we have that $\lambda_1,\dots, \lambda_d$ is a regular sequence with respect to $A$
if and only if $A/IA$ has dimension $0$ (see for instance
\cite[Corollary B.8.3, p.\,416]{GLS} or \cite[Theorem 2.1.2, p.\,58]{BrunsHerzog}). In turn, by relation (\ref{dimIN}), this is equivalent
to saying that the ideal $\J(g)+I$ has finite colength. Therefore conditions
\ref{item2} and \ref{item3} are equivalent, and the result follows.
\end{proof}

\begin{ex}\label{exCMp}
Let us consider the functions $h,f\in\O_3$ given by $h(x,y,z)=xy+z^4$ and $f(x,y,z)=yz$.
We observe that the map $(f,h):(\C^3,0)\to (\C^2,0)$ is not an {\icis}, since the zero set of the ideal $\J(f,h)+\langle f,h\rangle\subseteq \O_3$
has dimension $1$.
However $\O_3^{2}/D(f,g)$ is Cohen-Macaulay of dimension $1$, as can be checked by applying Singular \cite{Singular}.
Thefore, $(f,h)$ is $I$-consistent, for any principal ideal $I$ for which $\J(f,h)+I$ has finite colength,
by Theorem \ref{JXfIf}. For instance, this happens when $I=\langle x^2+y^2\rangle$.
\end{ex}

\begin{ex}\label{exCMp2}
Let $h:(\C^3,0)\to (\C^2,0)$ be the map given by $h(x,y,z)=(xy, xz)$ and let $f\in\O_3$ be the polynomial given by
$f(x,y,z)=x^2+y^2+z^2$. The map $(f,h)$ is not an {\icis}, since the dimension of $(f,h)^{-1}(0)$ equals $1$. However $\O_3^3/D(f,h)$ is Cohen-Macaulay of dimension $2$.
Let $I$ be the ideal of $\O_3$ generated by the regular sequence $y^3+z^3,\, xy+z^2$ of $\O_3$. This ideal verifies that
$\J(f,h)+I$ has finite colength. Therefore $(f,h)$ is $I$-consistent, by Theorem \ref{JXfIf}.
\end{ex}

In the following result we analyse a special case of Theorem \ref{JXfIf} that we will apply in Sections
\ref{logideals} and \ref{ThetaXT}.

\begin{cor}\label{JXfIf2}
Let $h=(h_1,\dots, h_p):(\C^n,0)\to (\C^p,0)$ be a analytic map germ, where $n-p\geq 1$, and let $f\in\O_n$ such that $(f,h):(\C^n,0)\to (\C^{p+1},0)$ is an {\icis}. Let $I$ be the ideal of $\O_n$ generated by $h_1,\dots, h_p$.
Then, the following conditions are equivalent:
\begin{enumerate}[label=\textnormal{(\alph*)}]
\item\label{item12} $(f, h)$ is $I$-consistent.
\item\label{item22} $h$ is an {\icis}.
\item\label{item220} $\J(f,h)+\langle h_1,\dots, h_p\rangle$ has finite colength.
\item\label{item32} $h_1,\dots, h_p$ is a regular sequence with respect to $\O_n^{p+1}/D(f,h)$.
\end{enumerate}
Consequently, under any of the above conditions the following equalities hold:
\begin{equation}\label{igualtatprincipi2}
J_{X}(f)\cap I=IJ(f)
\hspace{2cm}%\textnormal{and}\hspace{1cm}
J_{f, I}(h)\cap I^{\oplus p}=ID(h).
\end{equation}
\end{cor}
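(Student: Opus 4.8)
The plan is to obtain the four equivalences as an application of Theorem~\ref{JXfIf} to the \icis\ $g=(f,h):(\C^n,0)\to(\C^{p+1},0)$, with $s=p+1$, after checking that theorem's standing hypotheses. First I would record that, since $(f,h)$ is an \icis, its components $f,h_1,\dots,h_p$ form a regular sequence of the regular (hence Cohen--Macaulay) local ring $\O_n$; because a permutation and an initial segment of a regular sequence in a Noetherian local ring are again regular, $h_1,\dots,h_p$ is itself a regular sequence, so $I=\langle h_1,\dots,h_p\rangle$ is generated by a regular sequence of length $p$ and $\dim\O_n/I=n-p$. The crucial input is that the Jacobian module $\O_n^{p+1}/D(f,h)$ is Cohen--Macaulay of dimension $p$: its support is $V(\J(f,h))$, the locus where the $(p+1)\times n$ matrix $D(f,h)$ fails to have maximal rank $p+1$, and for an \icis\ this determinantal locus attains the expected codimension $n-p$, so that the Buchsbaum--Rim complex resolves the cokernel and exhibits it as a perfect, hence Cohen--Macaulay, module of dimension $p$. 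Taking $d=p$ and $\lambda_i=h_i$, Theorem~\ref{JXfIf} then yields the equivalence of \ref{item12}, \ref{item220} and \ref{item32}.

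It remains to weave in condition \ref{item22}, by proving that $h$ is an \icis\ if and only if $\J(f,h)+I$ has finite colength. Expanding each maximal minor of $D(f,h)$ along the row of partial derivatives of $f$ expresses it as an $\O_n$-combination of the $p\times p$ minors of $D(h)$, so $\J(f,h)\subseteq\J(h)$ and therefore $\J(f,h)+I\subseteq\J(h)+I$; since $\dim\O_n/I=n-p$ already holds, finiteness of the colength of $\J(f,h)+I$ forces $\J(h)+\langle h_1,\dots,h_p\rangle$ to have finite colength, giving \ref{item220}$\Rightarrow$\ref{item22}. For the converse I would assume $h$ is an \icis; then $X\setminus\{0\}$ is smooth with $\rank D(h)=p$ there, so the points of $V(\J(f,h))\cap X$ other than $0$ are exactly the critical points of $f|_{X\setminus\{0\}}$. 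Since $(f,h)$ is an \icis\ and $h$ is an \icis, the restriction $f|_X$ has an isolated singularity at $0$ (the L\^e--Greuel situation), hence no critical point near $0$ besides $0$; thus $V(\J(f,h))\cap X=\{0\}$ and $\J(f,h)+I$ has finite colength, establishing \ref{item22}$\Rightarrow$\ref{item220}.

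Finally, under these equivalent conditions $(f,h)$ is $I$-consistent, so Proposition~\ref{lesTheta} applied to the pair $(f,h)$ (here $q=1$) gives $J_{f,I}(h)\cap I^{\oplus p}=ID(h)$ and $J_{h,I}(f)\cap I=ID(f)=IJ(f)$. Because $h$ is an \icis\ with $n-p\geq 1$, the ideal $I$ is radical (see \cite[p.\,7]{Looijenga}), so $J_X(f)=J_{h,I}(f)$ and the first relation reads $J_X(f)\cap I=IJ(f)$, which yields (\ref{igualtatprincipi2}). I expect the main obstacle to be the Cohen--Macaulay input on $\O_n^{p+1}/D(f,h)$ (via the expected codimension of $V(\J(f,h))$ and Buchsbaum--Rim acyclicity) together with the geometric identification of $V(\J(f,h))\cap X$ with the critical locus of $f|_X$; once these are in place, everything reduces formally to Theorem~\ref{JXfIf} and Proposition~\ref{lesTheta}.
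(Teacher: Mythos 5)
Your proof is correct and follows essentially the same route as the paper: reduce (a)$\Leftrightarrow$(c)$\Leftrightarrow$(d) to Theorem~\ref{JXfIf} via the Cohen--Macaulayness of $\O_n^{p+1}/D(f,h)$, get (c)$\Rightarrow$(b) from $\J(f,h)\subseteq\J(h)$, get (b)$\Rightarrow$(c) from the L\^e--Greuel situation, and deduce (\ref{igualtatprincipi2}) from Proposition~\ref{lesTheta}. The only difference is presentational: where you sketch the Buchsbaum--Rim/perfection argument for Cohen--Macaulayness and the critical-locus argument for finiteness of $\J(f,h)+I$, the paper simply cites Looijenga (Proposition 6.12) and the L\^e--Greuel formula, respectively.
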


\begin{proof}
The fact that $(f,h)$ is an {\icis} implies that $f, h_1,\dots, h_p$ is a regular sequence in $\O_n$. In particular,
$h_1,\dots, h_p$ is a regular sequence in $\O_n$. Moreover, $\O_n^{p+1}/D(f,h)$ is
a Cohen-Macaulay module of dimension $p$, by  \cite[Proposition 6.12, p.\,108]{Looijenga}.
Therefore, by Theorem \ref{JXfIf}, items \ref{item12}, \ref{item220} and \ref{item32} are equivalent.

Let us suppose condition \ref{item220}.
The inclusion $\J(f,h)\subseteq \J(h)$ implies that $\J(h)+\langle h_1,\dots, h_p\rangle$ has also finite colength. Thus,
$h$ is an {\icis}.

Conversely, let us suppose that $h$ is an {\icis}. Since we assume that $(f,h)$ is also an {\icis},
the Lê-Greuel formula implies that $\J(f,h)+\langle h_1,\dots, h_p\rangle$
has finite colength and this colength equals $\mu(h)+\mu(f,h)$ (see for instance \cite[Theorem\,3.7.1]{Le}).
Hence \ref{item22} and \ref{item220} are also equivalent and the result follows.

The equalities of (\ref{igualtatprincipi2}) follow as direct applications of Proposition \ref{lesTheta}.
\end{proof}

Let us remark that, by applying a different argument, the first equality of (\ref{igualtatprincipi2}) was also obtained in \cite[Theorem 2.3]{LNOT4} when both $f$ and $h$ are hypersurfaces for which $\mu_X(f)$ is finite, being $X=h^{-1}(0)$.

If $h:(\C^n,0)\to (\C^p,0)$ and $f:(\C^n,0)\to (\C^q,0)$ denote analytic map germs, then
Theorem \ref{claudetot}\,(\ref{ses2}) is a tool to compute the colength of $J_{h,M}(f)+N$, for any
pair of submodules $M\subseteq \O_n^p$ and $N\subseteq \O_n^q$, whenever such colength is finite. In the next result we will focus our attention
on the case where $M=I^{\oplus p}$ and $N\subseteq I^{\oplus q}$, for a given ideal $I$ of $\O_n$, under the condition that
$(f,h)$ is $I$-consistent. By Theorem \ref{claudetot}\,(\ref{ses2}) we know that
$$
\dim_\C\frac{\O_n^q}{J_{h, I}(f)+N}=t_{N\oplus I^{\oplus p}}(f,h)-t_I(h).
$$
In the next result we see how the above relation can be improved under the assumption that $(f,h)$ is $I$-consistent.

\begin{cor}\label{expgen1}
Let $h:(\C^n,0)\to (\C^p,0)$ and $f:(\C^n,0)\to (\C^q,0)$ be analytic map germs. Let $I$ be an ideal of $\O_n$
and let $N\subseteq I^{\oplus q}$ be a submodule such that $D(f,h)+N\oplus I^{\oplus p}$ has finite colength in $\O_n^{q+p}$.
If $(f,h)$ is $I$-consistent, then
\begin{equation}\label{for00}
\dim_\C\frac{\O_n^q}{J_{h, I}(f)+N}= \dim_\C\frac{I^{\oplus q}}{ID(f)+N}+\t_I(f,h)-\t_I(h).
\end{equation}
\end{cor}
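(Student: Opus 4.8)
The plan is to start from the formula already available in Theorem \ref{claudetot}\,(\ref{ses2}) with $M=I^{\oplus p}$, which reads
$$
\dim_\C\frac{\O_n^q}{J_{h,I}(f)+N}=\dim_\C\frac{\O_n^{q+p}}{D(f,h)+(N\oplus I^{\oplus p})}-\t_I(h),
$$
so that the whole problem reduces to computing the colength of $D(f,h)+(N\oplus I^{\oplus p})$ in $\O_n^{q+p}$. Since $N\subseteq I^{\oplus q}$, there is an inclusion $D(f,h)+(N\oplus I^{\oplus p})\subseteq D(f,h)+I^{\oplus(q+p)}$, and the standing hypothesis that the smaller module has finite colength forces every quotient below to have finite length. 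First I would exploit the resulting short exact sequence
$$
0\to\frac{D(f,h)+I^{\oplus(q+p)}}{D(f,h)+(N\oplus I^{\oplus p})}\to\frac{\O_n^{q+p}}{D(f,h)+(N\oplus I^{\oplus p})}\to\frac{\O_n^{q+p}}{D(f,h)+I^{\oplus(q+p)}}\to 0,
$$
whose rightmost term is exactly $\t_I(f,h)$ by the notation fixed after (\ref{tauM}). Thus it remains to identify the dimension of the leftmost quotient with $\dim_\C\frac{I^{\oplus q}}{ID(f)+N}$.

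To do this, write $A=D(f,h)$, $B=N\oplus I^{\oplus p}$ and $C=I^{\oplus(q+p)}$, so that $B\subseteq C$. By the second isomorphism theorem the leftmost quotient is $\frac{A+C}{A+B}\cong\frac{C}{C\cap(A+B)}$, and the modular law (valid because $B\subseteq C$) gives $C\cap(A+B)=(A\cap C)+B$. This is precisely the step where $I$-consistency enters: by Definition \ref{defI-consistency} applied to $g=(f,h)$ with $s=q+p$, we have $A\cap C=D(f,h)\cap I^{\oplus(q+p)}=ID(f,h)$, whence $C\cap(A+B)=ID(f,h)+(N\oplus I^{\oplus p})$ and consequently
$$
\frac{A+C}{A+B}\cong\frac{I^{\oplus(q+p)}}{ID(f,h)+(N\oplus I^{\oplus p})}.
$$

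Finally I would collapse the last $p$ coordinates. The projection $\pi_1:I^{\oplus(q+p)}=I^{\oplus q}\oplus I^{\oplus p}\to I^{\oplus q}$ onto the first $q$ components has kernel $0\oplus I^{\oplus p}$, which is contained in $ID(f,h)+(N\oplus I^{\oplus p})$; hence $\pi_1$ induces an isomorphism of the quotient above onto $\frac{I^{\oplus q}}{\pi_1(ID(f,h)+(N\oplus I^{\oplus p}))}$. Since $\pi_1(ID(f,h))=ID(f)$ (the first $q$ entries of each generator $\frac{\partial(f,h)}{\partial x_i}$ are exactly $\frac{\partial f}{\partial x_i}$) and $\pi_1(N\oplus I^{\oplus p})=N$, this denominator equals $ID(f)+N$, so $\frac{A+C}{A+B}\cong\frac{I^{\oplus q}}{ID(f)+N}$. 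Substituting the two identifications into the short exact sequence and then into the formula from Theorem \ref{claudetot}\,(\ref{ses2}) yields (\ref{for00}). I expect the only genuinely delicate point to be the use of $I$-consistency to evaluate $D(f,h)\cap I^{\oplus(q+p)}$; everything else is diagram chasing together with bookkeeping on the direct-sum decomposition of $I^{\oplus(q+p)}$, so once that intersection is pinned down the remaining identifications are routine.
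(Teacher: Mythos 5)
Your proof is correct, but it takes a genuinely different route from the paper's. The paper applies Theorem \ref{claudetot}\,(\ref{ses2}) with $N$ replaced by $I^{\oplus q}$ to get $\dim_\C \O_n^q/(J_{h,I}(f)+I^{\oplus q})=\t_I(f,h)-\t_I(h)$, and then computes the correction term $\dim_\C\frac{J_{h,I}(f)+I^{\oplus q}}{J_{h,I}(f)+N}$ \emph{downstairs} in $\O_n^q$, identifying it with $\dim_\C\frac{I^{\oplus q}}{ID(f)+N}$ via Proposition \ref{lesTheta}, i.e.\ the equality $J_{h,I}(f)\cap I^{\oplus q}=ID(f)$, which is where $I$-consistency enters (through the derivation-theoretic characterization of Proposition \ref{consistent}). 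You instead apply (\ref{ses2}) once, with the given $N$, and compute the correction term \emph{upstairs} in $\O_n^{q+p}$: the second isomorphism theorem plus the modular law (valid since $N\oplus I^{\oplus p}\subseteq I^{\oplus(q+p)}$) reduce everything to the single intersection $D(f,h)\cap I^{\oplus(q+p)}$, which is exactly what Definition \ref{defI-consistency} evaluates, and a final projection killing $0\oplus I^{\oplus p}$ lands you on $\frac{I^{\oplus q}}{ID(f)+N}$. Your route bypasses Propositions \ref{lesTheta} and \ref{consistent} entirely, using only the raw definition of $I$-consistency; the price is the extra bookkeeping of the modular-law identification and the projection step, all of which you carry out correctly (in particular the kernel containment $0\oplus I^{\oplus p}\subseteq N\oplus I^{\oplus p}$ and the equality $\pi_1(ID(f,h))=ID(f)$ are both justified). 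The paper's version has the advantage of isolating the reusable statement $J_{h,I}(f)\cap I^{\oplus q}=ID(f)$, which it needs again in Section \ref{ThetaXT}; yours is the more self-contained and elementary argument.
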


\begin{proof}
We first notice that, since $D(f,h)+N\oplus I^{\oplus p}$ has finite colength in $\O_n^{q+p}$, the
exactness of (\ref{ses}) implies that the submodules $J_{h, I}(f)+N$ and $D(h)+I^{\oplus p}$ have finite colength in $\O_n^q$ and $\O_n^p$, respectively.
By applying Theorem \ref{claudetot}\,(\ref{ses2}), we deduce that
\begin{equation}\label{for1}
%\dim_\C\frac{\O_n}{J+I}=\dim_\C\frac{\O_n^{p+1}}{D(f,h)+I^{\oplus(p+1)}}-\dim_\C\frac{\O_n^p}{D(h)+I^{\oplus p}}.
\dim_\C\frac{\O_n^q}{J_{h, I}(f)+I^{\oplus q}}=\t_I(f,h)-\t_I(h).
\end{equation}
Moreover, we have
\begin{equation}\label{for2}
\frac{J_{h, I}(f)+I^{\oplus q}}{J_{h, I}(f)+N}\cong  \frac{I^{\oplus q}}{J_{h, I}(f)\cap I^{\oplus q}+N}=
 \frac{I^{\oplus q}}{ID(f)+N},
\end{equation}
where the last equality is an application of Proposition \ref{lesTheta}, since $(f,h)$ is $I$-consistent.
Therefore, by (\ref{for1}) and (\ref{for2}) we conclude the following:
\begin{align*}
\dim_\C\frac{\O_n^q}{J_{h, I}(f)+N}&=\dim_\C\frac{\O_n^q}{J_{h, I}(f)+I^{\oplus q}}+\dim_\C\frac{J_{h, I}(f)+I^{\oplus q}}{J_{h, I}(f)+N}\\
&=\t_I(f,h)-\t_I(h)+\dim_\C\frac{I^{\oplus q}}{ID(f)+N}
\end{align*}
and thus (\ref{for00}) follows.
\end{proof}

We will see in Section \ref{ThetaXT} that the number $\t_I(h)$ of (\ref{for00})
is particularly related with the module $\Theta_{h, I}$ (see Corollary \ref{ThThT}).

\begin{ex}\label{exfxyz}
Let us consider the map $h:(\C^3,0)\to (\C^2,0)$ given by
$$
h(x,y)=\big(xy(x+y+z),\, yz(x+y-z)\big)
$$ and the function $f\in\O_3$
given by $f(x,y,z)=xyz+x^4+y^4+z^4$, for all $(x,y,z)\in\C^3$. We have that $(f,h)$ is not an {\icis} (the zero set of the ideal $\J(f,h)+\langle f,h\rangle\subseteq \O_3$ has dimension $1$) but $\O_3^3/D(f,h)$ is Cohen-Macaulay of dimension $2$
(as can be checked by using Singular \cite{Singular}). Let $I$ be the ideal of $\O_3$ generated by the regular sequence $x+y+z,\,x^2+y^2$ of $\O_3$.
Since the ideal $\J(f,h)+I$ has finite colength, we conclude that $(f,h)$ is $I$-consistent, by Theorem \ref{JXfIf}. We also find that $t_I(h)=7$
and $t_I(f,h)=12$. We observe that the submodule $D(f,h)+(0\oplus I^{\oplus 2})$ has finite colength in $\O_3^3$.
Therefore, by Corollary \ref{expgen1} (and thus without knowing an explicit generating system of $\Theta_{h ,I}$) we have
$$
\dim_\C\frac{\O_3}{J_{h, I}(f)+N}= \dim_\C\frac{I}{IJ(f)+N}+5
$$
for any ideal $N\subseteq I$. In particular
$$
\dim_\C\frac{\O_3}{J_{h, I}(f)}= \dim_\C\frac{I}{IJ(f)}+5=19.
$$
\end{ex}

\begin{rem}
The case of Corollary \ref{expgen1} where $h$ is an {\icis}, $I$ equals the ideal generated by the component functions of $h$ and $q=1$ will be analysed in the next section. As we will see in Corollary \ref{icislgth}, the term $t_I(f,h)$
of (\ref{for00}) will split as $\mu(f,h)+\mu(h)$ in this case, by virtue of the Lê-Greuel formula and a preliminary result about the
colength of submodules (see Lemma \ref{colparameterM}).
\end{rem}

Under the conditions of Corollary \ref{expgen1}, and assuming also that $q=1$, the next lemma will help us to compare $\mu(f)$ with the colength of $IJ(f)$ in $I$.
Given a Noetherian local ring $(R,\m)$, an $\m$-primary ideal $J$ of $R$ and an $R$-module $M$,
we denote by $e(J;M)$ the multiplicity of $J$ with respect to $M$ (see for instance \cite[Definition 11.1.5]{HS}
or  \cite[p.\,107]{Matsumura}).

\begin{lem}\label{IJf}
Let $(R,\m)$ be a Noetherian local Cohen-Macaulay ring of dimension $d$.
Let $J$ be an $\m$-primary ideal of $R$ generated by $d$ elements.
Let $I$ be an ideal of $R$ such that $\dim \frac RI< d$. Then
\begin{equation}\label{IJI}
%\ell\(\frac{R}{J}\)
\ell\(\frac{R}{J}\)\leq \ell\(\frac{I}{JI}\)
\end{equation}
and equality holds when $I$ is generated by a non-zerodivisor of $R$.
\end{lem}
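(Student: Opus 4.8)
The plan is to translate the length comparison into a statement about Hilbert--Samuel multiplicities and then to invoke the standard inequality between $\ell(M/JM)$ and the multiplicity $e(J;M)$. Since $R$ is Cohen--Macaulay of dimension $d$ and $J$ is $\m$-primary and generated by $d$ elements, these generators form a system of parameters of $R$ and hence, by the Cohen--Macaulay hypothesis, a regular sequence; thus $J$ is a parameter ideal and $\ell\(\frac RJ\)=e(J;R)$ (see for instance \cite[\S 14]{Matsumura} or \cite[\S 4.7]{BrunsHerzog}). This identifies the left-hand side of (\ref{IJI}) with a multiplicity.

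Next I would feed the ideal $I$ into the short exact sequence
\[
0\lto I\lto R\lto \frac RI\lto 0 .
\]
Fixing the ambient dimension $d=\dim R$ and computing every multiplicity in that single dimension (so that a module of dimension strictly less than $d$ contributes $0$), additivity of the multiplicity along short exact sequences gives $e(J;R)=e(J;I)+e(J;\frac RI)$. Since $\dim\frac RI<d$ by hypothesis, the last term vanishes, so $e(J;I)=e(J;R)=\ell\(\frac RJ\)$. The same exact sequence forces $\dim I=d$, so the $d$ generators of $J$ form a system of parameters for the module $I$ as well.

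The core of the argument is then the general inequality $e(J;M)\le \ell\(\frac{M}{JM}\)$, valid whenever $J$ is generated by a system of parameters of a finitely generated module $M$ (see \cite[\S 14]{Matsumura} or \cite[\S 4.6]{BrunsHerzog}). Applying it to $M=I$ and combining with the previous identity yields
\[
\ell\(\frac RJ\)=e(J;R)=e(J;I)\le \ell\(\frac{I}{JI}\),
\]
which is precisely (\ref{IJI}). For the equality clause, suppose $I=\langle a\rangle$ with $a$ a non-zerodivisor of $R$; then multiplication by $a$ is an $R$-module isomorphism $R\xrightarrow{\ \sim\ }aR=I$ carrying $J$ onto $JI=aJ$, whence it induces an isomorphism $\frac RJ\cong\frac{I}{JI}$ and the two lengths coincide (note that $\dim\frac RI=d-1<d$ in this case, so the hypotheses are indeed met).

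The step requiring the most care is the dimension bookkeeping in the additivity formula: one must fix $d=\dim R$ once and for all, interpret each $e(J;-)$ as the multiplicity taken in that fixed dimension, and then verify both that $e(J;\frac RI)=0$ precisely because $\dim\frac RI<d$ and that $\dim I=d$, so that the inequality $e(J;I)\le\ell\(\frac{I}{JI}\)$ is a genuine instance of the parameter-ideal inequality rather than a vacuous one. Everything else is a routine combination of cited classical facts about multiplicities in Cohen--Macaulay local rings.
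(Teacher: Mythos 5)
Your proof is correct and follows essentially the same route as the paper: identify $\ell(R/J)$ with $e(J;R)$ via the Cohen--Macaulay hypothesis, use additivity of multiplicity along $0\to I\to R\to R/I\to 0$ together with $\dim R/I<d$ to get $e(J;I)=e(J;R)$, apply the inequality $e(J;I)\le\ell(I/JI)$ for ideals generated by $d$ elements, and handle the equality case by the multiplication-by-$a$ isomorphism $R/J\cong I/JI$. The only (harmless) difference is your extra bookkeeping that $\dim I=d$, which the paper leaves implicit.
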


\begin{proof}
Since we assume that $J$ is generated by $d$ elements, we can apply \cite[Proposition 11.1.10\,(1)]{HS} to deduce that
\begin{equation}\label{IJIprevi}
e(J;I)\leq \ell\(\frac{I}{JI}\),
\end{equation}
where $e(J;I)$ denotes the multiplicity of $J$ with respect to $I$, considering $I$ as an $R$-module.

The natural exact sequence $
\xymatrix@C=0.1cm@R=5ex{
0 \ar[rr] &&  I \ar[rr] &&
R  \ar[rr]  &&  \frac{R}{I}  \ar[rr] && 0
}
$ shows that
$$
e(J)=e(J;I)+e\(J;\frac{R}{I}\)
$$
by \cite[Theorem 14.6]{Matsumura} (see also \cite[Theorem 11.2.3]{HS}).
The condition $\dim \frac RI<d$ implies that $e(J;\frac{R}{I})=0$, by \cite[14.2]{Matsumura} (this fact also arises as a direct
application of the definition of $e(J;\frac{R}{I})$). Therefore $e(J)=e(J;I)$.
Moreover, since $R$ is Cohen-Macaulay and $J$ is generated by $d$ elements,
we have $e(J)=\ell\(R/J\)$ (see for instance \cite[Proposition 11.1.10 (2)]{HS}
or \cite[Theorem 17.11]{Matsumura}). Therefore, (\ref{IJIprevi}) implies (\ref{IJI}).

We observe that, given an element $h\in R$, if $h$ is a non-zerodivisor of $R$, then the morphism $R\to \langle h\rangle$ given
by multiplication by $h$ induces an isomorphism of $R$-modules $R/J\cong \langle h\rangle /J\langle h\rangle$ and therefore
$$
\ell\(\frac{R}{J}\)= \ell\(\frac{\langle h\rangle}{J\langle h\rangle}\).
$$
\end{proof}

As a direct consequence of Lemma \ref{IJf} we obtain that,
if $f:(\C^n,0)\to (\C,0)$ is a function with isolated singularity at the origin and $I$ is any proper ideal of $\O_n$, then
\begin{equation}\label{IJf2}
\dim_\C\frac{\O_n}{J(f)}\leq \dim_\C\frac{I}{J(f)I}
\end{equation}
and equality holds if $I$ is a non-zero principal ideal.

%%%%%%%%%%%%%%%%%%%%%%%%%%%%%%%%%%%%%%%%%%%%%%%%%%%%%%%%%%%%%%%%%%%%%%%%%%%%%%%%%%%%%%%%%%%%%%%%%%%%%%%%%%%

\section{Logarithmic ideals associated to {\icis}}\label{logideals} %Comparing $\mu_X(f)$ and $\tau_X(f)$

Let $h:(\C^n,0)\to (\C^p,0)$ be an {\icis} and let $X=h^{-1}(0)$.
In this section we show a direct application of Corollary \ref{expgen1} where we relate $\mu_X(f)$
 with other numerical invariants attached to $h$ and $f$, where $f:(\C^n,0)\to (\C,0)$ is an analytic function germ such that $\mu_X(f)$ is finite.
Hence, by applying a different argument, we deduce in Corollary \ref{noumuXf}\,(\ref{formuXf}) the same formula for $\mu_X(f)$ shown in \cite[p.\,45]{LNOT5}.

In the next result we will recall a fundamental fact of commutative algebra about  submodules of a free module.
Let $R$ be a Noetherian local ring and let $M\subseteq R^p$ be a submodule.
We denote by $\I_p(M)$ the ideal generated by the minors of order $p$
of any generating matrix of $M$ (that is, of any matrix whose columns form a generating system of $M$).
If $M$ has finite colength, then
$M$ is generated by at least $\dim(R)+p-1$ elements (see for instance
\cite[p.\ 213]{BRim}).
%In this case, following the notation introduced by Gaffney in \cite{Gaffney96}, we denote by $c(M)$
%the colength of $M$.
We denote by $e(M)$ the Buchsbaum-Rim multiplicity of $M$
(we refer to \cite{BRim},
\cite[\S 16]{HS} and \cite[\S 8]{Vasconcelos} for the definition
and properties of this notion, which generalizes the usual notion of multiplicity of ideals).
If $M$ admits a generating system formed by $\dim(R)+p-1$ elements and $M$ has finite colength, then we
say that $M$ is a {\it parameter submodule}.

\begin{lem}\label{colparameterM}
Let $R$ be a Noetherian Cohen-Macaulay local ring of dimension $d$. Let $M\subseteq R^{p+1}$ be a submodule
generated by $d$ elements, where $d-1\geq p\geq 1$.  Let $I$ be an ideal of $R$ such that
$\dim R/I=d-p$ and $I$ is generated by $p$ elements. Let us suppose that the colength of the ideal $\I_{p+1}(M)+I$ is finite. Then
\begin{equation}\label{param}
\ell\(  \frac{R}{\I_{p+1}(M)+I} \)=\ell\(  \frac{R^{p+1}}{M+I^{\oplus(p+1)}} \)=e(A),
\end{equation}
where $A$ denotes the image of $M$ in $(R/I)^{p+1}$.
\end{lem}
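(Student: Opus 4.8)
The plan is to push the entire statement down to the quotient ring $S=R/I$, where $A$ becomes a parameter submodule, and then to appeal to the Buchsbaum--Rim theory of such modules. First I would record that $I$ is generated by a regular sequence: since $R$ is Cohen--Macaulay of dimension $d$, the ideal $I$ is generated by $p$ elements, and as $\dim R/I=d-p$ these $p$ generators cut the dimension down by $p$ and therefore form a regular sequence of $R$ (see for instance \cite[Theorem 2.1.2]{BrunsHerzog} or \cite[Theorem 17.4]{Matsumura}). Consequently $S=R/I$ is Cohen--Macaulay of dimension $d'=d-p\geq 1$.

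Next I would translate the two colengths on the left of (\ref{param}) into invariants of $A$ over $S$. Writing $\pi\colon R^{p+1}\to S^{p+1}$ for reduction modulo $I$, we have $\pi(M)=A$ and $\ker\pi=I^{\oplus(p+1)}$, so that $S^{p+1}/A\cong R^{p+1}/(M+I^{\oplus(p+1)})$; being of finite length, this module has the same length over $R$ and over $S$. Moreover a generating matrix of $A$ over $S$ is the reduction modulo $I$ of a generating matrix of $M$, so its maximal minors generate $\big(\I_{p+1}(M)+I\big)/I$; hence $\I_{p+1}(A)=(\I_{p+1}(M)+I)/I$ and $\ell\big(S/\I_{p+1}(A)\big)=\ell\big(R/(\I_{p+1}(M)+I)\big)$. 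Thus (\ref{param}) is reduced to the two equalities $\ell\big(S/\I_{p+1}(A)\big)=\ell\big(S^{p+1}/A\big)=e(A)$ computed over $S$.

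I would then check that $A$ is a parameter submodule of $S^{p+1}$ in the sense recalled above: it is generated by $d=\dim(S)+(p+1)-1$ elements, and it has finite colength because $\operatorname{Supp}_S\big(S^{p+1}/A\big)=V(\I_{p+1}(A))$, while $\I_{p+1}(A)$ is $\m_S$-primary by the hypothesis that $\I_{p+1}(M)+I$ has finite colength in $R$. With this in hand, the equality $\ell\big(S^{p+1}/A\big)=e(A)$ is exactly the fundamental theorem identifying the colength of a parameter module with its Buchsbaum--Rim multiplicity (see \cite{BRim}, \cite[\S 16]{HS} and \cite[\S 8]{Vasconcelos}).

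The main obstacle is the remaining equality $\ell\big(S/\I_{p+1}(A)\big)=e(A)$, which relates the colength of the top Fitting ideal to the Buchsbaum--Rim multiplicity. The key point is that the parameter hypothesis forces a maximal-grade (perfect) situation: $A$ is the image of a $(p+1)\times d$ matrix, and $\I_{p+1}(A)$ being $\m_S$-primary means $\grade\,\I_{p+1}(A)=\dim S=d-p=d-(p+1)+1$, the largest possible grade for the ideal of maximal minors of such a matrix. In this case both the Buchsbaum--Rim complex of $A$ and the Eagon--Northcott complex of $\I_{p+1}(A)$ are acyclic, resolving $S^{p+1}/A$ and $S/\I_{p+1}(A)$ respectively, and a comparison of the invariants extracted from these resolutions yields $\ell\big(S/\I_{p+1}(A)\big)=e(A)$; this is the classical input I would quote from \cite{BRim} and \cite[\S 16]{HS}. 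All three quantities in (\ref{param}) then coincide, and it is precisely here that the parameter hypothesis on $M$ is indispensable.
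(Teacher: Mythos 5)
Your proposal is correct and follows essentially the same route as the paper's proof: pass to $S=R/I$, identify the two quotients with $S^{p+1}/A$ and $S/\I_{p+1}(A)$, observe that $A$ is a parameter submodule of $S^{p+1}$, and invoke the Buchsbaum--Rim theory (the paper cites \cite[p.\,214]{Gaffney96} and \cite[Theorem 8.30]{Vasconcelos} for both equalities at once). Your added verification that $I$ is generated by a regular sequence, so that $S$ is Cohen--Macaulay --- a hypothesis the cited parameter-module theorem needs and which the paper leaves implicit --- is a welcome extra detail rather than a divergence.
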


\begin{proof}
Let $S$ denote the quotient ring $R/I$.
We have the following isomorphisms:
\begin{align}
\frac{S^{p+1}}{A} &\cong \frac{R^{p+1}}{M +I^{\oplus (p+1)}} \\%  \label{mod1}\\
\frac{S}{\I_{p+1}(A)} &\cong \frac{R}{\I_{p+1}(M)+I}.  \label{mod2}
\end{align}
By hypothesis the ideal $\I_{p+1}(M)+I$ has finite colength. Hence (\ref{mod2}) gives that
$\I_{p+1}(A)$ has finite colength. In particular, $A$ has finite colength too
(see \cite[p.\ 214]{Gaffney96} or \cite[Theorem 8.30]{Vasconcelos}). Moreover, $A$ is generated
by $d$ elements, being $d=p+1+\dim(S)-1$. Hence $A$ is a parameter submodule of $S^{p+1}$.
Therefore, by \cite[p.\ 214]{Gaffney96} (see also \cite[Theorem 8.30]{Vasconcelos}) we have that
$$
e(A)=\dim_\C\frac{S^{p+1}}{A}=\dim_\C\frac{S}{\I_{p+1}(A)},
$$
which shows (\ref{param}).
\end{proof}

\begin{cor}\label{icislgth}
Let $h=(h_1,\dots, h_p):(\C^n,0)\to (\C^p,0)$ be an {\icis} with $n-p\geq 1$ and
let $I$ be the ideal of $\O_n$ generated by $h_1,\dots, h_p$.
Let $f\in\O_n$ such that the map $(f,h):(\C^n,0)\to (\C^{p+1},0)$ is also an {\icis}.
Then
\begin{equation}\label{segonc}
\t_I(f,h)=\dim_\C\frac{\O_n^{p+1}}{D(f,h)+I^{\oplus(p+1)}}=\dim_{\C}\frac{\O_n}{\J(f,h)+I}=\mu(h)+\mu(f,h).
\end{equation}
\end{cor}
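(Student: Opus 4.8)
The plan is to prove the three equalities of (\ref{segonc}) from left to right. The leftmost equality is purely a matter of notation: since $(f,h)$ maps into $(\C^{p+1},0)$, its Jacobian module $D(f,h)$ lies in $\O_n^{p+1}$, and by the definition (\ref{tauM}) together with the abbreviation $\t_I(f,h)=\t_{I^{\oplus(p+1)}}(f,h)$, one has $\t_I(f,h)=\dim_\C\O_n^{p+1}/(D(f,h)+I^{\oplus(p+1)})$ by definition, with nothing to prove.

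For the middle equality I would apply Lemma \ref{colparameterM} with $R=\O_n$, $M=D(f,h)$ and $I=\langle h_1,\dots,h_p\rangle$. The ring $\O_n$ is regular, hence Cohen--Macaulay, of dimension $d=n$, and $M$ is generated by exactly the $n$ columns $\frac{\partial(f,h)}{\partial x_1},\dots,\frac{\partial(f,h)}{\partial x_n}$, so $M$ needs $d$ generators. The order-$(p+1)$ minors of this generating matrix are the maximal minors of the Jacobian matrix of $(f,h)$, whence $\I_{p+1}(M)=\J(f,h)$. Because $h$ is an {\icis} with $n-p\geq 1$, its components form a regular sequence, so $I$ is generated by $p$ elements and $\dim\O_n/I=n-p=d-p$, while $1\leq p\leq n-1=d-1$. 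The one substantive hypothesis to verify is that $\I_{p+1}(M)+I=\J(f,h)+I$ has finite colength; this is precisely condition \ref{item220} of Corollary \ref{JXfIf2}, which holds here since both $h$ and $(f,h)$ are {\icis}. Lemma \ref{colparameterM} then yields $\dim_\C\O_n/(\J(f,h)+I)=\dim_\C\O_n^{p+1}/(D(f,h)+I^{\oplus(p+1)})$, using that length over $\O_n$ coincides with $\dim_\C$.

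For the rightmost equality the plan is to invoke the L\^e-Greuel formula (see \cite[Theorem 3.7.1]{Le} and (\ref{LeGr})) for the chain of {\icis} $h=(h_1,\dots,h_p)$ sitting inside $(h,f)=(h_1,\dots,h_p,f)$. Since the ideal of maximal minors does not depend on the ordering of the component functions and $\mu(f,h)=\mu(h,f)$, the formula gives $\mu(h)+\mu(f,h)=\dim_\C\O_n/(\langle h_1,\dots,h_p\rangle+\J(h_1,\dots,h_p,f))=\dim_\C\O_n/(\J(f,h)+I)$. This identity is in fact already recorded inside the proof of Corollary \ref{JXfIf2}.

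I expect the main obstacle to be the careful matching of the data $(R,M,I)$ to the numerical hypotheses of Lemma \ref{colparameterM}---in particular that $M=D(f,h)$ must be generated by exactly $d=n$ elements and that $\dim\O_n/I=d-p$---together with the verification that $\J(f,h)+I$ has finite colength via Corollary \ref{JXfIf2}. Once these are settled, the leftmost equality is definitional and the rightmost is a direct application of L\^e-Greuel.
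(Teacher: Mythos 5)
Your proposal is correct and follows essentially the same route as the paper: the left equality is definitional, the middle one is Lemma \ref{colparameterM} applied to $R=\O_n$, $M=D(f,h)$, $I=\langle h_1,\dots,h_p\rangle$ (with $\I_{p+1}(D(f,h))=\J(f,h)$), and the right one is the L\^e--Greuel formula, which also supplies the finite-colength hypothesis. Your version merely spells out the numerical hypotheses of Lemma \ref{colparameterM} more explicitly than the paper does, and that verification is accurate.
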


\begin{proof}
Since both $h$ and $(f,h)$ are {\icis}, the Lê-Greuel formula \cite{Le} gives that the ideal $\J(f,h)+I$ has finite colength in $\O_n$ and this colength is equal to $\mu(f,h)+\mu(h)$. Therefore (\ref{segonc}) is a direct application of Lemma \ref{colparameterM} and
the formula of Lê-Greuel.
\end{proof}

Let us remark that, as a direct consequence of Corollaries \ref{segonaexp}\,(\ref{nuXficis}) and \ref{icislgth}, we obtain the relation
\begin{equation}\label{lanubarra}
\mu_X^-(f)=\mu(f,h)+\mu(h)-\tau(h)
\end{equation}
for any $f\in \O_n$ for which $\mu_X(f)<\infty$, where $X=h^{-1}(0)$ and $h:(\C^n,0)\to (\C^p,0)$ is an {\icis} (see also \cite[Theorem 2.2]{LNOT5}). This conclusion will also follow as a consequence of the next result. By
Remark \ref{sobresegonaexp}\,\ref{mesgeneralencara} and Corollary \ref{icislgth}, relation (\ref{lanubarra}) remains true by only assuming
that $\mu_X^-(f)<\infty$ and $(f,h)$ is an {\icis}.

\begin{cor}\label{noumuXf}
Let $h:(\C^n,0)\to (\C^p,0)$ be an {\icis} with $n-p\geq 1$, let $I=\langle h_1,\dots, h_p\rangle$ and let $X=h^{-1}(0)$.
Let $f\in\O_n$ such that $\mu_X(f)<\infty$. Then $(f,h)$ is an {\icis} and
\begin{equation}\label{formuXf+N}
\dim_\C\frac{\O_n}{J_X(f)+N}=\dim_\C\frac{I}{IJ(f)+N}+\mu(f,h)+\mu(h)-\tau(h),
\end{equation}
for any ideal $N\subseteq I$. In particular, we obtain the following expression for $\mu_X(f)$:
\begin{equation}\label{formuXf}
\mu_X(f)=\dim_\C\frac{I}{IJ(f)}+\mu(f,h)+\mu(h)-\tau(h).
\end{equation}
\end{cor}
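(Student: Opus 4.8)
The plan is to deduce everything from Corollary \ref{expgen1} applied with $q=1$, the ideal $I=\langle h_1,\dots,h_p\rangle$, and the given ideal $N\subseteq I$. To set this up I must first check the two standing hypotheses of that corollary for the pair $(f,h)$: namely that $(f,h)$ is an \icis\ (which is also part of what is asserted) and that $(f,h)$ is $I$-consistent, together with the finite-colength requirement. Once these are in place, the proof reduces to matching each term produced by \ref{for00} with the invariants appearing in \ref{formuXf+N}.

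First I would show that $(f,h)$ is an \icis. Since $\mu_X(f)<\infty$, Corollary \ref{segonaexp} gives that $D(f,h)+(0\oplus I^{\oplus p})$ has finite colength in $\O_n^{p+1}$, hence so does the larger submodule $D(f,h)+I^{\oplus(p+1)}$. Relation \ref{dimIN} (a consequence of \cite[Proposition 20.7]{Eisenbud}, valid here because $p+1\leq n$) identifies $\dim_\C\O_n^{p+1}/(D(f,h)+I^{\oplus(p+1)})$ with $\dim_\C\O_n/(\J(f,h)+I)$, so $\J(f,h)+I$ has finite colength; as $I\subseteq\langle f,h_1,\dots,h_p\rangle$, the ideal $\J(f,h)+\langle f,h_1,\dots,h_p\rangle$ has finite colength as well. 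It remains to verify that $\dim(f,h)^{-1}(0)=n-p-1$. Here I use that $I$ is radical (because $h$ is an \icis\ of positive dimension), so that on the smooth part of each $(n-p)$-dimensional component $W$ of $X$ the differential $dh$ already has rank $p$; a point $x\in X_{\mathrm{reg}}\setminus\{0\}$ lies in $V(\J(f,h))$ precisely when $df_x\in\operatorname{span}(dh_{1,x},\dots,dh_{p,x})$, i.e.\ when $x$ is a critical point of $f|_{X}$. The finiteness of the colength of $\J(f,h)+I$ forces this critical set to be isolated, so $f|_{W}$ is non-constant and $f^{-1}(0)\cap W$ has dimension $n-p-1$; thus $(f,h)$ is an \icis. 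I expect this dimension count to be the main obstacle: everything else is colength bookkeeping, whereas this step is the genuinely geometric point where the finiteness of $\mu_X(f)$ prevents $f$ from vanishing on a component of $X$.

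With $(f,h)$ and $h$ both \icis, Corollary \ref{JXfIf2} yields that $(f,h)$ is $I$-consistent. Since $N\subseteq I$ we have the sandwich $D(f,h)+(0\oplus I^{\oplus p})\subseteq D(f,h)+(N\oplus I^{\oplus p})\subseteq D(f,h)+I^{\oplus(p+1)}$, so $D(f,h)+(N\oplus I^{\oplus p})$ has finite colength and all hypotheses of Corollary \ref{expgen1} are met with $q=1$. Applying \ref{for00} then gives
\begin{equation*}
\dim_\C\frac{\O_n}{J_{h,I}(f)+N}=\dim_\C\frac{I}{ID(f)+N}+\t_I(f,h)-\t_I(h).
\end{equation*}

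Finally I would identify the four terms. Because $I$ is radical, $J_{h,I}(f)=J_X(f)$, and $ID(f)=IJ(f)$ by the convention $D(f)=J(f)$ for a function; moreover $\t_I(h)=\tau(h)$ since $h$ is an \icis\ with $I=\langle h_1,\dots,h_p\rangle$, while Corollary \ref{icislgth} gives $\t_I(f,h)=\mu(f,h)+\mu(h)$. Substituting these identities into the displayed formula produces exactly \ref{formuXf+N}, and the special case $N=0$ yields \ref{formuXf}.
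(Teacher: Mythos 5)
Your proposal is correct and follows the paper's own route almost exactly: establish that $(f,h)$ is an {\icis}, deduce $I$-consistency from Corollary \ref{JXfIf2}, apply Corollary \ref{expgen1} with $q=1$, and convert $\t_I(f,h)$ and $\t_I(h)$ into $\mu(f,h)+\mu(h)$ and $\tau(h)$ via Corollary \ref{icislgth} and the definition of the Tjurina number. The only point where you diverge is the first step: the paper simply cites \cite[Proposition 2.8]{BiviaRuas} for the implication $\mu_X(f)<\infty\Rightarrow(f,h)$ is an {\icis}, whereas you reprove it from scratch (finite colength of $\J(f,h)+I$ via Corollary \ref{segonaexp} and the Fitting-ideal identification, then the dimension count using the Jacobian criterion on $X_{\mathrm{reg}}$ and the radicality of $I$); that argument is sound and makes the proof self-contained, at the cost of some length. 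Your explicit verification of the finite-colength hypothesis of Corollary \ref{expgen1} by sandwiching $D(f,h)+(N\oplus I^{\oplus p})$ between $D(f,h)+(0\oplus I^{\oplus p})$ and $D(f,h)+I^{\oplus(p+1)}$ is a detail the paper leaves implicit, and is correctly handled.
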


\begin{proof}
The condition $\mu_X(f)<\infty$ implies that the map $(f,h):(\C^n,0)\to (\C^{p+1},0)$ is an \textsc{icis}, by \cite[Proposition 2.8]{BiviaRuas}.
Hence $(f,h)$ is $I$-consistent, by Theorem \ref{JXfIf}. Therefore, by
Corollary \ref{expgen1} and the definition of Tjurina number of an {\icis} (see (\ref{tauicis})), we deduce that
\begin{align}
\dim_\C\frac{\O_n}{J_{h, I}(f)+N}&= \dim_\C\frac{I}{IJ(f)+N}+\t_I(f,h)-\tau(h)   \nonumber\\
&=\dim_\C\frac{I}{IJ(f)+N}+\mu(f,h)+\mu(h)-\tau(h)  \label{sumadenus}
\end{align}
for any ideal $N\subseteq I$, where we have applied Corollary \ref{icislgth} in (\ref{sumadenus}).
Hence (\ref{formuXf+N}) follows. The case $N=0$ of (\ref{formuXf+N}) leads to (\ref{formuXf}).
\end{proof}

\begin{rem}\label{remfinal}
\begin{enumerate}[label=(\alph*),wide]
\item Under the conditions of Corollary \ref{noumuXf}, let us denote the number
$\mu(f)+\mu(f,h)+\mu(h)-\tau(h)$ by $\kappa(f,h)$.
By (\ref{IJf2}) and (\ref{formuXf}) we obtain
$$
\mu_X(f)=\kappa(f,h)+\dim_\C \frac{I}{IJ(f)}-\mu(f)\geq \kappa(f,h)
$$
and equality holds when $p=1$. That is, the case $p=1$ of Corollary \ref{segonaexp} leads to the formula proven independently in
\cite[Theorem 1]{Kour} and \cite[Corollary 4.1]{LNOT3}.

\item Let us also remark that if we take $p=n-1$ in Corollary \ref{noumuXf}, then the map
$(f,h):(\C^n,0)\to (\C^n,0)$ becomes a zero-dimensional {\icis} whose Milnor number is given by
$\mu(f,h)=\dim_\C\O_n/(\langle f\rangle+I)-1$ (see \cite[p.\,261]{Greuel} or \cite[p.\,78]{Looijenga}).

\end{enumerate}
\end{rem}

We end this section by relating (\ref{formuXf}) with the $\mu^*$-sequence of a given function germ of $\O_n$.
Let $f:(\C^n,0)\to (\C,0)$ be a function germ with isolated singularity at the origin.
Let $\mu^*(f)=(\mu^{(n)}(f),\dots, \mu^{(1)}(f), \mu^{(0)}(f))$ be the $\mu^*$-sequence of $f$ defined by Teissier
in \cite[p.\,300]{Cargese}. Following the notation introduced in \cite[p.\,463]{BiviaRuas}, given an index $i\in\{1,\dots, n\}$, we denote by $\mu_{H^{(i)}}(f)$ the minimum value of $\mu_H(f)$ when $H$ varies in the set of linear subspaces of $\C^n$ of dimension $i$. That is, $\mu_{H^{(i)}}(f)$ is equal to the value of $\mu_H(f)$,
for a generic $i$-dimensional linear subspace $H\subseteq \C^n$. By (\ref{formuXf}), when $i\in\{0,1,\dots,n-1\}$, we have
\begin{align}
\mu_{H^{(i)}}(f)&=\dim_\C\frac{\langle h_1,\dots, h_{n-i}\rangle}{\langle h_1,\dots, h_{n-i}\rangle J(f)}+\mu(f, h_1,\dots, h_{n-i}) \nonumber\\
&=
\dim_\C\frac{\langle h_1,\dots, h_{n-i}\rangle}{\langle h_1,\dots, h_{n-i}\rangle J(f)}+\mu^{(i)}(f)  \label{elsgamma}
\end{align}
for a generic choice of linear forms $h_1,\dots, h_{n-i}\in\C[x_1,\dots, x_n]$. We recall that $\mu^{(0)}(f)=1$, by definition.

Given an index $i\in\{0,1,\dots,n-1\}$, relation (\ref{elsgamma}) motivates us to consider the common value of
$$
\dim_\C\frac{\langle h_1,\dots, h_{n-i}\rangle}{\langle h_1,\dots, h_{n-i}\rangle J(f)}
$$
when taking a generic linear map $(h_1,\dots, h_{n-i}):(\C^n,0)\to (\C^{n-i},0)$.
This is an analytic invariant of $f$, that we will denote by $\gamma^{(i)}(f)$.
Let $\m_n$ denote the maximal ideal of $\O_n$. We remark that
\begin{align*}
\gamma^{(0)}(f)&=\dim_\C\frac{\m_n}{\m_nJ(f)}=\dim_\C\frac{\O_n}{\m_nJ(f)}-1\\
\gamma^{(n-1)}(f)&=\mu(f)
\end{align*}
where the second relation follows as an application of Lemma \ref{IJf}. Hence $\mu_{H^{(n-1)}}(f)=\mu(f)+\mu^{(n-1)}(f)$, as already observed in \cite[p.\,464]{BiviaRuas}. It is worth to remark that, in turn, the number $\mu(f)+\mu^{(n-1)}(f)$ equals the multiplicity
of the Jacobian ideal of $f$ in the ring $\O_n/\langle f\rangle$ (see \cite[p.\,322]{Cargese}).

As a direct consequence of (\ref{elsgamma}) we obtain the following result, which
links the numbers $\ord(f)$, $\mu_{H^{(1)}}(f)$ and $\gamma^{(1)}(f)$.

\begin{cor}
Let $f\in\O_n$ with isolated singularity at the origin. Then
\begin{equation}\label{BRandorder}
\mu_{H^{(1)}}(f)=\gamma^{(1)}(f)+\ord(f)-1.
\end{equation}
\end{cor}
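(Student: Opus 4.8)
The plan is to specialize the identity (\ref{elsgamma}), already established just before the statement, to the case $i=1$, and then to evaluate the generic line-section Milnor number $\mu^{(1)}(f)$ by hand. First I would put $i=1$ in (\ref{elsgamma}) and recall that $\gamma^{(1)}(f)$ is by definition the generic value of $\dim_\C\langle h_1,\dots,h_{n-1}\rangle/\big(\langle h_1,\dots,h_{n-1}\rangle J(f)\big)$. This turns (\ref{elsgamma}) into
\[
\mu_{H^{(1)}}(f)=\gamma^{(1)}(f)+\mu^{(1)}(f),
\]
so that the whole corollary is reduced to the single equality $\mu^{(1)}(f)=\ord(f)-1$.

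To prove this equality I would use the second equality in (\ref{elsgamma}), which identifies $\mu^{(1)}(f)$ with the Milnor number $\mu(f,h_1,\dots,h_{n-1})$ of the zero-dimensional {\icis} $(f,h_1,\dots,h_{n-1}):(\C^n,0)\to(\C^n,0)$ attached to a generic choice of linear forms $h_1,\dots,h_{n-1}$. Writing $I=\langle h_1,\dots,h_{n-1}\rangle$, the case $p=n-1$ of Corollary \ref{noumuXf} recorded in Remark \ref{remfinal} gives
\[
\mu^{(1)}(f)=\mu(f,h_1,\dots,h_{n-1})=\dim_\C\frac{\O_n}{\langle f\rangle+I}-1 .
\]

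It then remains only to compute $\dim_\C \O_n/(\langle f\rangle+I)$. Since $h_1,\dots,h_{n-1}$ are generic linear forms, they extend to a linear coordinate system $(h_1,\dots,h_{n-1},t)$, whence $\O_n/I\cong\O_1=\C\{t\}$ and the class of $f$ corresponds to the line section $f|_H$, where $H=\{h_1=\cdots=h_{n-1}=0\}$ is a generic line through the origin. If $f_d$ denotes the initial form of $f$ with $d=\ord(f)$, then $f|_H(t)=f_d(v)\,t^{d}+O(t^{d+1})$ for the direction $v$ spanning $H$; because $f_d$ is a nonzero homogeneous polynomial its zero locus is a proper subvariety, so $f_d(v)\neq 0$ for a generic line and $f|_H$ has order exactly $d$. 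Hence $\langle f|_H\rangle=\langle t^{d}\rangle$ in $\C\{t\}$ and
\[
\dim_\C\frac{\O_n}{\langle f\rangle+I}=\dim_\C\frac{\C\{t\}}{\langle t^{d}\rangle}=\ord(f),
\]
so $\mu^{(1)}(f)=\ord(f)-1$. Substituting this into the first displayed relation yields (\ref{BRandorder}).

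The argument is essentially a bookkeeping of the definitions together with the two cited ingredients, and I do not expect a genuine obstacle. The only point that needs a little care is genericity: one must choose $H$ generic enough that simultaneously $\mu_H(f)$ attains its generic (minimal) value $\mu_{H^{(1)}}(f)$, the quantity $\gamma^{(1)}(f)$ is computed from its generic value, and the leading form $f_d$ does not vanish on $H\setminus\{0\}$. Each of these conditions fails only on a proper Zariski-closed subset of the Grassmannian of lines through the origin, so a single generic $H$ realizes all of them at once and the computation above applies verbatim.
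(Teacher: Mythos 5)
Your proof is correct and takes essentially the same route as the paper: the corollary is obtained there as a direct consequence of (\ref{elsgamma}) with $i=1$, together with the classical identity $\mu^{(1)}(f)=\ord(f)-1$, which you verify explicitly (and correctly) via the zero-dimensional {\icis} formula of Remark \ref{remfinal} and the generic line-section computation.
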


%%%%%%%%%%%%%%%%%%%%%%%%%%%%%%%%%%%%%%%%%%%%%%%%%%%%%%%%%%%%%%%%%%%%%%%%%%%%
\section{The computation of $\t_I(h)$ in terms of the module $\Theta_{h,I}$}\label{ThetaXT}

%\label{diffoper}
Let $h=(h_1,\dots, h_p):(\C^n,0)\to (\C^p,0)$ be an {\icis} with $n-p\geq 1$. Let $X= h^{-1}(0)$. Let us consider the matrix of size $(p+1)\times n$ given by
\begin{equation}\label{Ipmes1}
\left[
            \begin{array}{ccc}
              \frac{\partial}{\partial x_1} & \cdots & \frac{\partial}{\partial x_n} \\
              \frac{\partial h_1}{\partial x_1} & \cdots & \frac{\partial h_1}{\partial x_n} \\
              \vdots & \, & \vdots \\
              \frac{\partial h_p}{\partial x_1} & \cdots & \frac{\partial h_p}{\partial x_n} \\
            \end{array}
          \right]
\end{equation}
where in the first row we have included the differential operators $\frac{\partial}{\partial x_1} ,\dots,\frac{\partial}{\partial x_n}$.
As remarked in \cite[1.5]{Vosegaard}, it is known that $H_h$ is generated by
the minors of order $p+1$ of the above matrix (see also \cite[p.\,231]{GLS}).
Let $J$ denote the ideal of $\O_n$ generated by $h_1,\dots, h_p$. We remark that $J^{\oplus n}+H_h\subseteq \Theta_X$. Also by \cite[1.5]{Vosegaard}, the submodule $H_h\subseteq \O_n^n$ is not an invariant of the ideal $J$ (that is, it depends on the fixed generating system of $J$), but the image of $H_h$ in $\Theta_X/J^{\oplus n}$
does not depend on $h$ whenever the components of $h$ generate $J$. In particular, the quotient module
$\overline\Theta_X$ given by
$$
\overline\Theta_X=\frac{\Theta_X}{H_h+J^{\oplus n}}
$$
only depends on the ideal $J$ and it is known that $\overline\Theta_X$ is a $\C$-vector space of dimension equal to the Tjurina number $\tau(h)$ (see \cite[3.6]{Vosegaard}, \cite[9.6]{Looijenga} or \cite[Proposition 4.6]{LNOT5}). We will also deduce that $\tau(h)=\dim_\C\overline\Theta_X$ in Corollary \ref{isomicis}, as a consequence of a series of results
developed in a more general context.

Let $I$ be an ideal of $\O_n$. In this section we characterize the number $\t_I(h)$, under some conditions,
as a measure of how far is $I^{\oplus n}+H_h$ from $\Theta_{h,I}$ (see also Proposition \ref{consistent}). In order to simplify the notation, we introduce the following definition.

\begin{defn}\label{defdeThetaT}
Let $h:(\C^n,0)\to (\C^p,0)$ be an analytic map germ and let $I$ be an ideal of $\O_n$.
We denote by $\Theta^\T_{h,I}$ the submodule of $\Theta_{h, I}$ given by $I^{\oplus n}+ H_h$.
We refer to $\Theta^\T_{h,I}$ as the submodule of {\it trivial vector fields} of $\Theta_{h,I}$.
Let us remark that it is immediate to prove that $\Theta^\T_{h,I}=\{\delta\in \O^n_n: \delta(h)\in ID(h)\}$.
If $f:(\C^n,0)\to (\C^q,0)$ denotes another analytic map germ, we denote by $J^\T_{h,I}(f)$ the submodule $J_{h,I}(f)$ given by
$$
J^\T_{h,I}(f)=\big\{\delta(f):\delta\in \Theta^\T_{h,I}\big\}.
$$

Analogous to Remark \ref{introdThetaX}\,\ref{adeintrodThetaX}, if $X$ denotes an analytic subvariety of $(\C^n,0)$ and
$I(X)$ is generated by the components of a given analytic map $h:(\C^n,0)\to (\C^p,0)$, then we denote
$\Theta^\T_{h,I(X)}$ also by $\Theta^\T_X$. Therefore, we also define the submodule
$J^\T_{X}(f)=\big\{\delta(f):\delta\in \Theta^\T_{X}\big\}$.
\end{defn}

In the following result we apply the notion of $I$-consistency to show an isomorphism of modules.
In Corollary \ref{isomicis} we will analyse the case where $h$ is an {\icis} of positive dimension and $I$ is the ideal of $\O_n$ generated by the component functions of $h$.

\begin{prop}\label{lesTheta2}
Let $h:(\C^n,0)\to (\C^p,0)$ and $f:(\C^n,0)\to (\C^q,0)$ be analytic map germs and let
$I$ be any ideal of $\O_n$. Let us suppose that $(f,h)$ is $I$-consistent.
The evaluation morphism $\varepsilon:\Theta_{h,I}\to J_{h,I}(f)$, given by
$\varepsilon(\delta)=\delta(f)$, for any
$\delta \in\Theta_{h,I}$, induces the isomorphisms of $\O_n$-modules
\begin{equation}\label{iso2}
\frac{\Theta_{h,I}}{\Theta^\T_{h,I}}\cong \frac{J_{h,I}(f)}{J^\T_{h,I}(f)}
\cong \frac{J_{h,I}(f)+I^{\oplus q}}{J^\T_{h,I}(f)+I^{\oplus q}}.
\end{equation}
\end{prop}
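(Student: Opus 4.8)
The plan is to prove the two isomorphisms separately, bringing in the hypothesis that $(f,h)$ is $I$-consistent only at the two decisive points. For the first isomorphism I would begin with the remark that the evaluation morphism $\varepsilon:\Theta_{h,I}\to J_{h,I}(f)$ is surjective by the very definition of $J_{h,I}(f)$, and that by Definition \ref{defdeThetaT} it carries $\Theta^\T_{h,I}$ onto $J^\T_{h,I}(f)$. Hence $\varepsilon$ descends to a surjection $\overline\varepsilon:\Theta_{h,I}/\Theta^\T_{h,I}\to J_{h,I}(f)/J^\T_{h,I}(f)$, and the whole matter is reduced to proving that $\overline\varepsilon$ is injective, i.e.\ to the identity $\varepsilon^{-1}(J^\T_{h,I}(f))=\Theta^\T_{h,I}$. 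In other words, I must show that any $\delta\in\Theta_{h,I}$ with $\delta(f)\in J^\T_{h,I}(f)$ already lies in $\Theta^\T_{h,I}$.

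This injectivity step is where I expect the main difficulty to lie, and it is exactly where $I$-consistency enters. Given such a $\delta$, I would write $\delta(f)=\gamma(f)$ with $\gamma\in\Theta^\T_{h,I}$ and set $\eta=\delta-\gamma\in\Theta_{h,I}$, so that $\eta(f)=0$, that is $\eta\in H_f$. The key computation is to look at the stacked image $\eta(f,h)=(\eta(f),\eta(h))=(0,\eta(h))$: since $\eta\in\Theta_{h,I}$ we have $\eta(h)\in I^{\oplus p}$, so that $\eta(f,h)$ belongs to $D(f,h)\cap I^{\oplus(q+p)}$. By the $I$-consistency of $(f,h)$ (Definition \ref{defI-consistency}) this intersection equals $ID(f,h)$, whence there is $\rho\in I^{\oplus n}$ with $\rho(f,h)=\eta(f,h)$; comparing the last $p$ components gives $\eta(h)=\rho(h)\in ID(h)$. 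Invoking the characterization $\Theta^\T_{h,I}=\{\delta\in\O_n^n:\delta(h)\in ID(h)\}$ from Definition \ref{defdeThetaT}, this yields $\eta\in\Theta^\T_{h,I}$ and therefore $\delta=\eta+\gamma\in\Theta^\T_{h,I}$, which is the desired conclusion.

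For the second isomorphism I would apply the second isomorphism theorem to the submodules $J_{h,I}(f)$ and $J^\T_{h,I}(f)+I^{\oplus q}$, using $J^\T_{h,I}(f)\subseteq J_{h,I}(f)$, to obtain
\begin{equation*}
\frac{J_{h,I}(f)+I^{\oplus q}}{J^\T_{h,I}(f)+I^{\oplus q}}\cong\frac{J_{h,I}(f)}{J_{h,I}(f)\cap\big(J^\T_{h,I}(f)+I^{\oplus q}\big)}.
\end{equation*}
It then suffices to prove $J_{h,I}(f)\cap(J^\T_{h,I}(f)+I^{\oplus q})=J^\T_{h,I}(f)$, the inclusion $\supseteq$ being clear. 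For $\subseteq$ I would take $u=v+w$ with $u\in J_{h,I}(f)$, $v\in J^\T_{h,I}(f)$ and $w\in I^{\oplus q}$; then $w=u-v\in J_{h,I}(f)\cap I^{\oplus q}$, which by Proposition \ref{lesTheta} (valid precisely because $(f,h)$ is $I$-consistent) equals $ID(f)$. Finally, since $I^{\oplus n}\subseteq\Theta^\T_{h,I}$, evaluating at $f$ shows $ID(f)\subseteq J^\T_{h,I}(f)$, so $w\in J^\T_{h,I}(f)$ and hence $u\in J^\T_{h,I}(f)$. Concatenating this with the first isomorphism produces the chain \eqref{iso2}.
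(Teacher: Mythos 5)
Your proof is correct and follows essentially the same strategy as the paper: both reduce each isomorphism to showing that a derivation $\delta\in\Theta_{h,I}$ whose image lands in the trivial part must itself lie in $\Theta^\T_{h,I}=I^{\oplus n}+H_h$, and both extract this from the $I$-consistency of $(f,h)$ (you unpack $D(f,h)\cap I^{\oplus(q+p)}=ID(f,h)$ by hand at the point where the paper cites Proposition \ref{consistent}). The only cosmetic divergence is in the second isomorphism, where you invoke the second isomorphism theorem together with $J_{h,I}(f)\cap I^{\oplus q}=ID(f)$ from Proposition \ref{lesTheta}, while the paper instead recomputes the kernel of a second evaluation epimorphism; since Proposition \ref{lesTheta} rests on the same consistency mechanism, the two arguments are equivalent.
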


\begin{proof}
The evaluation morphism $\varepsilon:\Theta_{h,I}\to J_{h,I}(f)$ induces an epimorphism
$$
\varepsilon_1:\Theta_{h,I}\lto  \frac{J_{h,I}(f)}{J^\T_{h,I}(f)}.
$$
%Let $K=\ker(\overline\varepsilon)$.
It is immediate to check that
\begin{equation}\label{kerovvarep}
\Theta_{h, I}^\T\subseteq \ker(\varepsilon_1)=I^{\oplus n}+H_h+H_f\cap \Theta_{h, I}.
\end{equation}

By Proposition \ref{consistent} we have $\Theta_{f, I}\cap \Theta_{h, I}=I^{\oplus n}+H_f\cap  H_h$. Therefore
$$
I^{\oplus n}+H_h+H_f\cap \Theta_{h, I}\subseteq
I^{\oplus n}+H_h+\Theta_{f, I}\cap \Theta_{h, I}=
I^{\oplus n}+H_h=\Theta_{h, I}^\T.
$$
Hence $\ker(\varepsilon_1)=\Theta_{h, I}^\T$ and we obtain the first isomorphism of (\ref{iso2}).

Analogously, the evaluation morphism $\varepsilon:\Theta_{h,I}\to J_{h,I}(f)$ also induces an epimorphism
$$
\varepsilon_2:\Theta_{h,I}\lto  \frac{J_{h,I}(f)+I^{\oplus q}}{J^\T_{h,I}(f)+I^{\oplus q}}
$$
whose kernel equals $I^{\oplus n}+H_h+\Theta_{f, I}\cap \Theta_{h, I}$.
By applying again Proposition \ref{consistent}, we obtain that $\ker(\varepsilon_2)=\Theta^\T_{h, I}$.
Hence the second isomorphism of (\ref{iso2}) arises.
\end{proof}

\begin{ex}\label{exIcons1}
Let us consider the function germs $f,h\in\O_3$ given by $f(x,y,z)=xyz$ and $h(x,y,z)=xyz+x^4+y^4+z^4$, for all $(x,y,z)\in\C^3$. Let $X=h^{-1}(0)$
and let $I$ denote the ideal of $\O_3$ generated by $h$.
We have that $\mu_X(f)$ is not finite (although $\mu_X^-(f)=50$), since $f$ has not isolated singularity at the origin. However
$(f,h):(\C^3,0)\to (\C^2,0)$ is an {\icis}. Therefore $(f,h)$ is $I$-consistent, that is, $D(f,h)\cap I^{\oplus 2}=ID(f,h)$, by Corollary \ref{JXfIf2}
(this can be also checked directly by using Singular \cite{Singular}). In particular, Proposition \ref{lesTheta2} shows us that $\Theta_X/\Theta_X^\T\cong J_X(f)/J_X^\T(f)$. Let us remark that, by Corollary \ref{JXfIf2},
$(f,h)$ is not $\langle f\rangle$-consistent, since $f$ has not isolated singularity at the origin.
%, by using {\it Singular}, it is possible to check that $(f,h)$ is not $\langle f\rangle$-consistent.
\end{ex}

\begin{ex}\label{exIcons2}
Let $h$ be the function germ of Example \ref{exIcons1} and let $I=\langle x+y+z\rangle\subseteq\O_3$.
Hence $\Theta_{h, I}/\Theta_{h,I}^\T\cong J_{h, I}(f)/J_{h, I}^\T(f)$, for any function $f\in\O_3$ for which $(f,h)$ is $I$-consistent.
We find that the function $f$ of Example \ref{exIcons1} verifies that $(f,h)$ is $I$-consistent, since $\J(f,h)+I$ has finite colength
(see Theorem \ref{JXfIf}).
\end{ex}

The remaining section is devoted to compute the length of the quotient  module $\Theta_{h,I}/\Theta^\T_{h,I}$.

\begin{cor}\label{ThThT}
Let $h:(\C^n,0)\to (\C^p,0)$ be an {\icis} with $n-p\geq 1$. Let $I$ be an ideal of $\O_n$ generated by a regular sequence $\lambda_1,\dots, \lambda_p$ of $\O_n$. Let us suppose that the module $D(h)+I^{\oplus p}$ has finite colength in $\O_n^p$. Then
\begin{equation}\label{concl2}
%\dim_\C\frac{\Theta_{h, I}}{\Theta^\T_{h, I}}=\dim_\C\frac{J_{h, I}(f)}{J^\T_{h, I}(f)}
%=
\dim_\C \frac{J_{h,I}(f)+I}{J^\T_{h,I}(f)+I}=t_I(h)%\dim_\C\frac{\O_n^p}{D(h)+I^{\oplus p}}
\end{equation}
for any function germ $f\in\O_n$ such that $\J(f,h)+I$ has finite colength. If, moreover, we assume that $(f,h)$ is $I$-consistent, then
\begin{equation}\label{concl22}
\dim_\C\frac{\Theta_{h, I}}{\Theta^\T_{h, I}}=\dim_\C\frac{J_{h, I}(f)}{J^\T_{h, I}(f)}
=\dim_\C \frac{J_{h,I}(f)+I}{J^\T_{h,I}(f)+I}=t_I(h).%\dim_\C\frac{\O_n^p}{D(h)+I^{\oplus p}}
\end{equation}
\end{cor}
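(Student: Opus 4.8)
The plan is to prove the first equality (\ref{concl2}) by a direct colength computation and then to read off (\ref{concl22}) from Proposition \ref{lesTheta2}; throughout one has $q=1$, so $I^{\oplus q}=I$. The first step is to identify the trivial ideal $J^\T_{h,I}(f)+I$ explicitly. Writing $\Theta^\T_{h,I}=I^{\oplus n}+H_h$, any $\delta=\gamma+\rho$ with $\gamma\in I^{\oplus n}$ and $\rho\in H_h$ satisfies $\delta(f)=\gamma(f)+\rho(f)$, where the $\gamma(f)$ generate $IJ(f)$ and the $\rho(f)$ generate $\{\rho(f):\rho\in H_h\}$. Since $H_h$ is generated by the order-$(p+1)$ minors of the matrix (\ref{Ipmes1}), expanding such a minor-derivation along its operator row and evaluating on $f$ produces exactly a maximal minor of the Jacobian matrix of $(f,h)$; hence $\{\rho(f):\rho\in H_h\}$ generates $\J(f,h)$. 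Therefore $J^\T_{h,I}(f)+I=IJ(f)+\J(f,h)+I=\J(f,h)+I$, which has finite colength by hypothesis.

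Since $J^\T_{h,I}(f)\subseteq J_{h,I}(f)$, I would use the short exact sequence
\[
0\lto \frac{J_{h,I}(f)+I}{J^\T_{h,I}(f)+I}\lto \frac{\O_n}{J^\T_{h,I}(f)+I}\lto \frac{\O_n}{J_{h,I}(f)+I}\lto 0
\]
to reduce the statement to a comparison of two colengths. For the rightmost one, Theorem \ref{claudetot}\,(\ref{ses2}) with $M=I^{\oplus p}$ and $N=I$ gives
\[
\dim_\C\frac{\O_n}{J_{h,I}(f)+I}=\dim_\C\frac{\O_n^{p+1}}{D(f,h)+I^{\oplus(p+1)}}-t_I(h).
\]
Thus everything comes down to the key equality
\[
\dim_\C\frac{\O_n}{\J(f,h)+I}=\dim_\C\frac{\O_n^{p+1}}{D(f,h)+I^{\oplus(p+1)}},
\]
for substituting it into the sequence makes the two occurrences of $\dim_\C\frac{\O_n^{p+1}}{D(f,h)+I^{\oplus(p+1)}}$ cancel and leaves $\dim_\C\frac{J_{h,I}(f)+I}{J^\T_{h,I}(f)+I}=t_I(h)$.

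This last equality is the crux, and the step I expect to be the main obstacle. I would derive it from Lemma \ref{colparameterM} applied to the Cohen--Macaulay ring $R=\O_n$ of dimension $d=n$ and the module $M=D(f,h)\subseteq\O_n^{p+1}$. The hypotheses match: $D(f,h)$ is generated by exactly its $n$ columns $\frac{\partial(f,h)}{\partial x_i}$, i.e. by $d$ elements; one has $\I_{p+1}(M)=\J(f,h)$; the ideal $I$ is generated by $p$ elements with $\dim\O_n/I=n-p=d-p$; and $d-1\geq p\geq 1$ because $n-p\geq 1$. Since $\I_{p+1}(M)+I=\J(f,h)+I$ has finite colength, Lemma \ref{colparameterM} yields precisely $\ell(\O_n/(\J(f,h)+I))=\ell(\O_n^{p+1}/(D(f,h)+I^{\oplus(p+1)}))$. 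The only points needing care are that $M$ carries the exact number $d$ of generators demanded by the lemma and that the Fitting identification $\I_{p+1}(D(f,h))=\J(f,h)$ holds; both are immediate from the shape of the $(p+1)\times n$ Jacobian matrix. The same length comparison also shows that $D(f,h)+I^{\oplus(p+1)}$ has finite colength, which is what legitimizes the use of Theorem \ref{claudetot}\,(\ref{ses2}) above.

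Finally, for (\ref{concl22}) I would invoke Proposition \ref{lesTheta2}: under the extra assumption that $(f,h)$ is $I$-consistent, it supplies, taking $q=1$ in (\ref{iso2}), the isomorphisms $\frac{\Theta_{h,I}}{\Theta^\T_{h,I}}\cong\frac{J_{h,I}(f)}{J^\T_{h,I}(f)}\cong\frac{J_{h,I}(f)+I}{J^\T_{h,I}(f)+I}$. Since the first part identifies the rightmost quotient with $t_I(h)$, all four quantities in (\ref{concl22}) equal $t_I(h)$, which completes the argument.
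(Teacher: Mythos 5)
Your proposal is correct and follows essentially the same route as the paper: identify $J^{\T}_{h,I}(f)+I=\J(f,h)+I$ via the minor description of $H_h$, compare colengths through Lemma \ref{colparameterM} and Theorem \ref{claudetot}\,(\ref{ses2}) with $M=I^{\oplus p}$, $N=I$, and then invoke Proposition \ref{lesTheta2} for (\ref{concl22}). The only (welcome) difference is that you make explicit the verification of the hypotheses of Lemma \ref{colparameterM} and the finiteness of the colength of $D(f,h)+I^{\oplus(p+1)}$, which the paper leaves implicit.
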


\begin{proof}
Let $f\in\O_n$ such that $\J(f,h)+I$ has finite colength.
Since $h$ is an {\icis} with $n-p\geq 1$, the module $H_h$ is generated by
the minors of order $p+1$ of the matrix given in (\ref{Ipmes1}) (see \cite[p.\,231]{GLS} or \cite[1.5]{Vosegaard}).
Thus $J^\T_{h,I}(f)=\J(f,h)+IJ(f)$. Hence, the following equalities hold:
\begin{equation}\label{parametermdl}
\dim_\C\frac{\O_n}{J^\T_{h,I}(f)+I}=\dim_\C\frac{\O_n}{\J(f,h)+I}=\dim_\C\frac{\O_n^{p+1}}{D(f,h)+I^{\oplus(p+1)}},
\end{equation}
where the second equality comes as a direct application of Lemma \ref{colparameterM}, that is,
from the fact that the image of $D(f,h)$ in $(\O_n/I)^p$ is a parameter submodule of $(\O_n/I)^p$.
Let us observe that
\begin{align}
\dim_\C\frac{J_{h,I}(f)+I}{J^\T_{h,I}(f)+I}&=\dim_\C\frac{\O_n}{J^\T_{h,I}(f)+I}-\dim_\C\frac{\O_n}{J_{h,I}(f)+I} \nonumber\\
&=\dim_\C\frac{\O_n}{J^\T_{h,I}(f)+I}-\(\dim_\C\frac{\O_n^{p+1}}{D(f,h)+I^{\oplus(p+1)}}-\dim_\C\frac{\O_n^p}{D(h)+I^{\oplus p}}\)  \label{thescnd}  \\
&=\dim_\C\frac{\O_n^p}{D(h)+I^{\oplus p}}\label{thethrd}
\end{align}
where (\ref{thescnd}) is a direct application of (\ref{ses2}) and we have applied relation (\ref{parametermdl}) to obtain (\ref{thethrd}).
That is, we conclude
\begin{equation}\label{concl1}
\dim_\C\frac{J_{h,I}(f)+I}{J^\T_{h,I}(f)+I}=\dim_\C\frac{\O_n^p}{D(h)+I^{\oplus p}}=t_I(h).
\end{equation}
Thus relation (\ref{concl2}) follows.
If, in addition, we assume that $(f,h)$ is $I$-consistent. Then we can join (\ref{concl1}) with the isomorphisms provided by
Proposition \ref{lesTheta2} to obtain (\ref{concl22}).
\end{proof}

\begin{ex}
Let us consider the functions $h,f\in\O_3$ given in Example \ref{exCMp}. That is, $f=yz$ and $h=xy+z^4$.
As explained in Example \ref{exCMp}, $(f,h)$ is not an {\icis} but $(f,h)$ is $I$-consistent,
for any  proper principal ideal $I$ of $\O_3$ for which $\J(f,h)+I$ has finite colength.
By Corollary \ref{ThThT}, relation (\ref{concl22}) holds for such ideals $I$.
For instance, if $I=\langle x^2+y^2\rangle$, we have
\begin{equation}\label{concl22ex}
\dim_\C\frac{\Theta_{h, I}}{\Theta^\T_{h, I}}=\dim_\C\frac{J_{h, I}(f)}{J^\T_{h, I}(f)}
=\dim_\C \frac{J_{h,I}(f)+I}{J^\T_{h,I}(f)+I}=t_I(h)=3.%\dim_\C\frac{\O_n^p}{D(h)+I^{\oplus p}}
\end{equation}
Let us remark that, if we take $I=\langle h\rangle$, then the ideal $\J(f,h)+I$ has not finite colength. Moreover
$J_X(f)=J_X^\T(f)$, so equality (\ref{concl2}) does not hold in this case, since $\tau(h)=\mu(h)=3$.
\end{ex}

As an immediate consequence of Corollary \ref{JXfIf2} and Corollary \ref{ThThT},
we obtain the following characterization of Tjurina numbers of {\icis}.

\begin{cor}\label{isomicis}
Let $h:(\C^n,0)\to (\C^p,0)$ be an {\icis} with $n-p\geq 1$, let $I=\langle h_1,\dots, h_p\rangle$ and let $X=h^{-1}(0)$.
Then
\begin{equation}\label{thtjur}
\dim_\C\frac{\Theta_X}{\Theta^\T_X}=\dim_\C\frac{J_X(f)}{J^\T_X(f)}
=\dim_\C \frac{J_X(f)+I}{J^\T_X(f)+I}=\tau(h)
\end{equation}
for all $f\in\O_n$ such that $(f,h):(\C^n,0)\to (\C^{p+1},0)$ is an {\icis}.
\end{cor}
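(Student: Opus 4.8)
The plan is to obtain the statement as a direct specialization of Corollary \ref{ThThT}, once the notation is matched and the running hypotheses are checked. First I would reduce the logarithmic objects to the objects indexed by $I$. Since $h$ is an {\icis} with $n-p\geq 1$, the ideal $I=\langle h_1,\dots,h_p\rangle$ is radical (see \cite[p.\,7]{Looijenga}), so $I=I(X)$. By Remark \ref{introdThetaX}\,\ref{adeintrodThetaX} this gives $\Theta_X=\Theta_{h,I}$ and $J_X(f)=J_{h,I}(f)$, and by Definition \ref{defdeThetaT} the trivial submodules satisfy $\Theta^\T_X=\Theta^\T_{h,I}$ and $J^\T_X(f)=J^\T_{h,I}(f)$. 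Hence the three quotients appearing in (\ref{thtjur}) coincide termwise with those in (\ref{concl22}).

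Next I would verify the hypotheses of Corollary \ref{ThThT}. Because $h$ is an {\icis}, its components $h_1,\dots,h_p$ form a regular sequence of $\O_n$, so $I$ is generated by a regular sequence of length $p$. Moreover $D(h)+I^{\oplus p}=T(h)$, whose colength is precisely $\tau(h)<\infty$ by (\ref{tauicis}); in particular $D(h)+I^{\oplus p}$ has finite colength and $\t_I(h)=\tau(h)$. Finally, the hypothesis that $(f,h)$ is an {\icis} places us inside Corollary \ref{JXfIf2}: its equivalent conditions all hold, since condition \ref{item22} holds because $h$ is an {\icis}, so in particular $(f,h)$ is $I$-consistent (condition \ref{item12}) and $\J(f,h)+I$ has finite colength (condition \ref{item220}).

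With every hypothesis of Corollary \ref{ThThT} now in place, its conclusion (\ref{concl22}) supplies the chain of equalities $\dim_\C(\Theta_{h,I}/\Theta^\T_{h,I})=\dim_\C(J_{h,I}(f)/J^\T_{h,I}(f))=\dim_\C\big((J_{h,I}(f)+I)/(J^\T_{h,I}(f)+I)\big)=\t_I(h)$. Translating through the identifications of the first paragraph and using $\t_I(h)=\tau(h)$ yields exactly (\ref{thtjur}). I do not expect a genuine obstacle here: the entire substance is carried by Corollaries \ref{JXfIf2} and \ref{ThThT}, and the only points requiring attention are the verification of the two finite-colength conditions, $D(h)+I^{\oplus p}$ and $\J(f,h)+I$, and the identification $\Theta_X=\Theta_{h,I}$ through the radicality of $I$.
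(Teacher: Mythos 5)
Your proposal is correct and follows essentially the same route as the paper: both invoke Corollary \ref{JXfIf2} to obtain $I$-consistency of $(f,h)$ and the finite colength of $\J(f,h)+I$, and then conclude by Corollary \ref{ThThT}. The only difference is that you spell out the identifications $\Theta_X=\Theta_{h,I}$, $\t_I(h)=\tau(h)$ and the remaining finiteness checks, which the paper leaves implicit.
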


\begin{proof}
Let $f\in \O_n$ for which the map $(f,h):(\C^n,0)\to (\C^{p+1},0)$ is an {\icis} (for instance, the function $f$ can be taken as a
generic linear form). Since we assume that $h$ is an {\icis}, then $(f,h)$ is $I$-consistent and
$\J(f,h)+I$ has finite colength, by Corollary \ref{JXfIf2}. Hence (\ref{thtjur}) follows from Corollary \ref{ThThT}.
\end{proof}

\begin{ex}
Let us consider the setup of Example \ref{exIcons1}. Although $\mu_X(f)$ is not finite, the map $(f,h):(\C^3,0)\to (\C^2,0)$ is an {\icis}.
So we can apply Corollary \ref{isomicis} to obtain that
$$
\dim_\C\frac{\Theta_X}{\Theta^\T_X}=\dim_\C\frac{J_X(f)}{J^\T_X(f)}
=\dim_\C \frac{J_X(f)+I}{J^\T_X(f)+I}=\tau(h)=10.
$$
\end{ex}

%%%%%%%%%%%%%%%%%%%%%%%%%%%%%%%%%%%%%%%%%%%%%%%%%%%%%%%%%%%%%%%%%%%%%%%%%%%%%%%%%%%%%%%%%%%%%%%%%%%%%%%%%%%%%%%%%%%%%%%%%%%%%%%%%%%%%%%%%%%%%%%%%%%%
%%%%%%%%%%%%%%%%%%%%%%%%%%%%%%%%%%%%%%%%%%%%%%%%%%%%%%%%%%%%%%%%%%%%%%%%%%%%%%%%%%%%%%%%%%%%%%%%%%%%%%%%%%%%%%%%%%%%%%%%%%%%%%%
\vspace{3.1cm}

%\noindent{\bf{\textsf{Acknowledgements}}}\\
\noindent{\bf{Acknowledgements}}\\
Part of this work was developed during the visit of the first and the second authors to the Instituto de Ciências Matemáticas e de Computação, Universidade de São Paulo (campus de São Carlos, Brazil), in February-March 2023. They want to thank this institution for their hospitality and financial support. \\

\vspace{-0.2cm}

\noindent The authors dedicate this article to the memory of our colleague and friend Jim Damon.\\

\vspace{-0.2cm}

\noindent{\bf{Funding}}\\
The first author was partially supported by Grant PID2021-124577NB-I00 funded by
Ministerio de Ciencia, Innovación y Universidades MCIN/AEI/10.13039/501100011033 and by \lq ERDF A way of making Europe' (European Regional Development Fund).
The second author was partially supported by the UK Engineering and Physical Sciences Research Council, Grant EP/Y020669/1.
The third author was partially supported by Fundação de Amparo à Pesquisa do Estado de São Paulo, Proc. 2019/21181-0
and Conselho Nacional de Desenvolvimento Científico e Tecnológico, Proc. 305695/2019-3.

\end{document}